\newcommand{\mb}{\mathbb}
\newcommand{\mc}{\mathcal}
\newcommand{\ms}{\mathscr} 
\newcommand{\mr}{\mathrm}
\newcommand{\mf}{\mathfrak}
\newcommand{\ope}{\mathscr} 
\newcommand{\C}{{\mathbb C}}
\newcommand{\R}{{\mathbb R}}
\newcommand{\N}{{\mathbb N}}
\newcommand{\Z}{{\mathbb Z}}
\newcommand{\D}{\nabla}
\newcommand{\cv}{\D}
\newcommand{\TSM}{T^*\mfd }
\newcommand{\sym}{S^{2,0}}  
\newcommand{\ds}{\mr{d}s}
\newcommand{\dx}{\mr{d}x}
\newcommand{\Ds}{\partial_s}
\newcommand{\ps}{\frac{\partial}{\partial s}}
\newcommand{\pd}{\partial}
\newcommand{\lp}{\langle}
\newcommand{\rp}{\rangle}
\newcommand{\ve}{\varepsilon}
\newcommand{\vp}{\varphi}
\newcommand{\mat}{\begin{pmatrix}}
\newcommand{\rix}{\end{pmatrix}}
\newcommand{\smat}{\left(\begin{smallmatrix}}
\newcommand{\srix}{\end{smallmatrix}\right)}
\newcommand{\rfc}{\mf{rfc}}
\renewcommand{\Re}{\mathop{\mathrm{Re}}}
\newcommand{\gh}{\tilde g}
\newcommand{\ball}{{\mc B}}
\newcommand{\mfd}{{\mc M}}
\DeclareMathOperator{\Rc}{Rc}
\DeclareMathOperator{\Rm}{Rm}
\DeclareMathOperator{\resolvent}{\mathscr{R}}
\newtheorem{theorem}{Theorem}  
\newtheorem{lemma}[theorem]{Lemma}
\newtheorem{definition}[theorem]{Definition}
\newtheorem{prop}[theorem]{Proposition}
\newtheorem{main}{Main Theorem}	 	
\numberwithin{theorem}{section}
\numberwithin{equation}{section}
\begin{document}

\title	[Infinite-dimensional dynamical instabilities]
	{Infinite-dimensional dynamical instabilities of noncompact stationary Ricci flow solutions}
  
\author{Sigurd B. Angenent}
\author{Dan Knopf}

\begin{abstract}
Regarding Ricci flow as a dynamical system, we derive sufficient conditions for noncompact stationary (Ricci-flat) solutions to
possess infinite-dimensional unstable manifolds, and provide examples satisfying those criteria that have uncountably many
unstable perturbations.
\end{abstract}

\maketitle

\begingroup\footnotesize\sffamily
\tableofcontents
\endgroup

\section{Introduction}~

In this work, we develop a set of tools for investigating the stability or instability of noncompact Ricci-flat manifolds. We are
primarily interested in formulating criteria for instability of noncompact solitons that are asymptotically conical. We prove
results that apply to general noncompact manifolds with bounded geometry and then demonstrate their applicability to noncompact
Ricci-flat manifolds which form a subset of a family of Einstein metrics discovered by Christoph Böhm~\cite{B99}.

\subsection{Main results}

More precisely, our conclusions are as follows. Let $(\mfd , g) $ be a complete Ricci-flat manifold with bounded geometry.  We
introduce a geometric condition that implies that the Lichnerowicz Laplacian acting on symmetric 2-tensors,
\begin{equation} \label{Lich-Lap}
  \Delta_\ell h_{jk}\stackrel{\rm def}=g^{pq}\D_p\D_q h_{jk} + 2 R^q_{pjk}h_q^p - R_j^p h_{pk}-R_k^q h_{jq},
\end{equation}
generates analytic semigroups on a variety of spaces, including $L^2(T^*\mfd \otimes_{\rm S}T^*\mfd )$ and the
little-Hölder\footnote{~We write $h^{k,\alpha}(V)$ for the so-called \emph{little Hölder spaces} of sections of a vector bundle $V\to\mfd$.
  These may be defined as the closure of smooth sections, \emph{i.e.,} $C^\infty(V)$, in the usual Hölder space $C^{k,\alpha}$.  See
  Appendix~\ref{About little Holder} for a short review of these spaces.  } spaces $h^{0,\alpha}(T^*\mfd \otimes_{\rm S}T^*\mfd )$.
Moreover, we show that the essential spectrum of $\Delta_\ell$ is exactly the negative real axis,
$ \sigma_{\rm ess}(\Delta_\ell) = (-\infty, 0]$.  This implies that any remaining spectrum consists of isolated eigenvalues, which may
accumulate only on the essential spectrum. It is worth noting that this is true regardless of which function space we choose, which may
not be the case for manifolds that are not asymptotically flat.\footnote{~While the spectrum of $\Delta_\ell$ turns out to be the same
  in both cases, this is in general not true.  For example, the spectrum of the Laplacian on the Poincaré disk $\mb D^2$ viewed as an
  unbounded operator in $L^p(\mb D^2)$ depends on $p$.  In particular, the $L^2$ spectrum is the half-line $(-\infty, -\frac{1}{4}]$,
  while the spectrum in $L^\infty$ is the region $\{x+iy \in\C \mid x\leq -y^2\}$: this region contains an open subset of the complex
  plane in spite of the fact that $\Delta_{\mb D^2}$ is formally self adjoint.  See \cite{MR689073,MR1016445}.  In \cite{MR3552794}
  Ammann and Große study more general (Dirac) operators and also provide a useful history and references.

  We note that the fact that the spectrum of $\Delta_\ell$ on the Böhm soliton $(\mfd, g)$ is the same in $L^2$ and $h^{0,\alpha}$ turns
  out to be a consequence of the asymptotic flatness of $(\mfd, g)$.}
  \medskip

A geometric criterion that we assume throughout this paper is as follows:
\begin{definition}[Bounded geometry]
One says a complete Riemannian manifold $(\mfd , g)$ has \emph{bounded geometry} if there exists $C>0$ such that:
\begin{itemize}
\item the Riemann tensor is uniformly bounded, $\sup_{\mfd } |\Rm| \leq C$, and
\item the injectivity radius is bounded from below, $\mr{inj}(g)\geq1/C$.
\end{itemize}
\end{definition}
For an overview of some of the consequences of this assumption, the reader may wish to consult~\cite{TAubin98} or~\cite{Hebey}.

In addition, we will frequently assume the our manifold is asymptotically flat, in the following sense.

\begin{definition}[Asymptotically flat] \label{Asymptotically flat} A Riemannian manifold $(\mfd, g)$ of bounded geometry is
\emph{asymptotically flat} if there exists a proper function $r:\mfd \to (0, \infty)$ such that the exponential map
$\exp_p:T_p\mfd \to \mfd $ is injective on
\[
\ball_{r(p)}(p) = \{v\in T_p\mfd \colon |v|\leq r(p)\}.
\]
Furthermore, as $p\to\infty$, the pullback metrics $g_p = \exp_p^*(g)$ on $\ball_{r(p)}(p)$ converge in $C^{2,\alpha}$ to the flat
Euclidean metric on $\ball_{r(p)}\subset T_p\mfd $.
\end{definition}

We note that asymptotically conical metrics --- which constitute our main examples in this paper --- are asymptotically flat.  \medskip

Our first principal result concerns ancient solutions to a Ricci-DeTurck flow: after choosing a DeTurck vector field $X$ --- see
\eqref{eqn:slick X}) ---  we consider the initial value problem for
\begin{equation}		\label{RDT with X}
\frac{\partial \tilde g}{\partial t} = - 2\Rc[\tilde g] + \mc L_X \tilde g,
\end{equation}
whose solutions are equivalent to solutions of the Ricci flow $\frac{\partial g}{\partial t} = -2\Rc[g]$.

\goodbreak
\begin{main}		\label{MT1}
Let $(\mfd, g)$ be an asymptotically flat Ricci-flat manifold.

For any $\delta>0$, the set $\{\lambda\in\sigma(\Delta_\ell) : \Re\lambda > \delta\}$ consists of a finite number (possibly zero) of
real eigenvalues
\[
\lambda_1\geq \lambda_2 \geq \lambda_3 \geq \cdots \geq \lambda_{N_\delta}.
\]
If $N_\delta>0$, then there exists a real analytic $N_\delta$-dimensional family,
\[
\ms M\subset h^{2,\alpha}(T^*\mfd \otimes_{\rm S}T^*\mfd ),
\]
such that each metric $g(0)\in \ms M$ belongs to an ancient solution
\[
\{g(t):-\infty<t\leq 0\}\subset\ms M
\]
that converges exponentially to $g$ in backward time.
\end{main}
\medskip

\subsubsection*{Group invariance}
If \(\mf G\) is a compact subgroup of the group of isometries of the Ricci-flat manifold \((\mfd, g)\), then our discussion of ancient
solutions still applies when one considers \(\mf G\)-invariant metrics on \(\mfd\) near \(g\).  In this case one must also only
consider \(\mf G\)-invariant eigentensors of the Lichnerowicz Laplacian and their corresponding eigenvalues.  The conclusion of the
Main Theorem then is that there exist a finite number of eigenvalues
\(\lambda_1^{\mf G}\geq \cdots\geq \lambda_{N_\delta}^{\mf G}>\delta\) whose eigentensors are \(\mf G\)-invariant, and that there
exists a real analytic \(N_\delta\) dimensional family \(\ms M_{\mf G}\subset h^{2, \alpha}\) consisting of \(\mf G\)-invariant metrics
\(g(0)\), each of which lies on a \(\mf G\)-invariant ancient solution \(\{g(t) : -\infty<t\leq 0\} \subset \ms M\).

\bigskip

Our second principal result shows that the hypothesis $N_\delta>0$ is not vacuous. Before stating it, we recall a special case of a
cohomogeneity-one construction of Einstein manifolds developed by Christoph Böhm --- please see Theorems~6.1 and 6.2 of~\cite{B99} for
statements in full generality\footnote{~In particular, Böhm shows that for given integers $r\geq0$ and $k,k_1,\dots,k_{r+1}\geq2$ there
  exists an $r$-dimensional family of Ricci-flat metrics on $ \R^{k+1}\times\mc S^{k_1}\times\cdots\times\mc S^{k_{r+1}} $, and even on
  some quotients of these spaces.}

\begin{prop}[Böhm~\cite{B99}]		\label{prop:Bohm exists}
Given integers \( n_1, n_2 \geq 2\), there exists a Ricci flat metric on $\R^{n_1+1}\times \mc S^{n_2}$.
\end{prop}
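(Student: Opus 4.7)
The plan is a cohomogeneity-one reduction, following Bérard-Bergery and Böhm. I would look for $\mathrm{SO}(n_1+1) \times \mathrm{SO}(n_2+1)$-invariant metrics on $(0,\infty) \times \mc S^{n_1} \times \mc S^{n_2}$ of doubly warped form
\begin{equation*}
g = \ds^2 + f_1(s)^2 \, g_{\mc S^{n_1}} + f_2(s)^2 \, g_{\mc S^{n_2}},
\end{equation*}
where $g_{\mc S^{n_i}}$ denotes the round metric of radius one. For $g$ to extend smoothly across $s=0$ to a metric on $\R^{n_1+1} \times \mc S^{n_2}$, I would impose the smoothness boundary conditions $f_1(0) = 0$, $f_1'(0) = 1$, $f_2(0) = c > 0$, $f_2'(0) = 0$; the remaining parameter $c$ plays the role of a shooting variable.

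The equation $\Rc[g] = 0$ in this ansatz reduces to a coupled second order system for $(f_1, f_2)$ together with a first order constraint, inherited from the contracted second Bianchi identity and preserved by the flow. Introducing suitably rescaled logarithmic derivatives $X_i = f_i'/f_i$ along with a new geometric time variable converts the system into an autonomous polynomial flow on a compact phase space, in which the constraint cuts out an invariant algebraic submanifold.

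Two critical points of the reduced flow are decisive. The singular-orbit point $P_\mr{sing}$ corresponds to the regular axis $\{s=0\}$; its unstable manifold inside the constraint surface parametrizes exactly the one-parameter family of smoothly closing initial data indexed by $c$. The cone point $P_\mr{cone}$ corresponds to the Ricci-flat cone over the unique (up to scale) product Einstein metric of the form $a_1^2 g_{\mc S^{n_1}} + a_2^2 g_{\mc S^{n_2}}$; such a metric exists precisely because $n_1, n_2 \ge 2$, and a linearization shows that $P_\mr{cone}$ admits a nontrivial stable manifold in the constraint surface.

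The main obstacle --- and the technical heart of Böhm's argument --- is producing a connecting orbit from $P_\mr{sing}$ to $P_\mr{cone}$. As the shooting parameter $c$ varies, two competing failure modes must be excluded: at one extreme the trajectory collapses the $\mc S^{n_2}$ factor at some finite $s$, while at the other it escapes along a non-cone asymptotic direction. A continuity / intermediate-value argument applied to a monotone Lyapunov functional --- equivalently, to the sign of a conserved observable on the exit boundary of the physical region --- then supplies an intermediate value of $c$ whose trajectory enters the stable manifold of $P_\mr{cone}$. Smoothness at $s=0$ is built into the boundary conditions, completeness follows from the trajectory existing for all $s \ge 0$, and convergence to $P_\mr{cone}$ yields an asymptotically conical, and in particular asymptotically flat, Ricci-flat metric on $\R^{n_1+1} \times \mc S^{n_2}$.
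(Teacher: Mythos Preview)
The paper does not supply its own proof of this proposition; it is stated as a result of Böhm, with a pointer to Theorems~6.1 and~6.2 of~\cite{B99}, and is used as a black box thereafter. Your outline correctly identifies the cohomogeneity-one doubly warped product ansatz, the smoothing boundary conditions at $s=0$, the reduction to an autonomous polynomial system on an invariant constraint surface, and the role of the Ricci-flat cone as an asymptotic critical point---this is indeed the architecture of Böhm's argument. The one place your sketch is genuinely soft is the existence of the connecting orbit: the ``monotone Lyapunov functional / intermediate-value'' step is the technical heart of~\cite{B99} and requires a careful analysis of the phase portrait that is not reproduced here. But since the present paper treats the proposition purely as a citation, no more than this is expected, and your proposal is an accurate summary of the source.
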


The Böhm metrics are invariant under the action of
\[
\mf G \stackrel{\rm def}= \mr {O}(n_1+1)\times\mr{O}(n_2+1)\quad\mbox{ on }\quad \R^{n_1+1}\times \mc S^{n_2};
\]
they have a multiply warped product structure, and are asymptotic to Ricci-flat cones at spatial infinity.  The group $\mf G$ is the
full isometry group for the Böhm metrics in Proposition~\ref{prop:Bohm exists}.  These metrics also play a prominent role in our recent
work~\cite{GAFA}.

To apply Theorem~\ref{MT1} to the Böhm metrics, we study the spectral properties of their Lichnerowicz Laplacians, both in
$L^2$ and in $h^{0,\alpha}$ spaces.  
The group $\mf G$ acts on \( \mfd \) and therefore also acts on $(2,0)$-tensor fields by pullback
\[
(\psi, h)\in \mf G \times L^2(\sym\mfd) \mapsto \psi^*h \in L^2(\sym\mfd).
\]
Since the action of any $\psi\in\mf G$ on $\mfd$ is smooth, its action on $(2,0)$-tensor fields preserves regularity (\emph{i.e.,}
$h^{0,\alpha}$ tensor fields are mapped to $h^{0,\alpha}$-tensor fields, \emph{etc.}).  Any \(\psi\in\mf G\) also acts on \(\mfd\) by
isometries, and therefore the Lichnerowicz Laplacian~\(\Delta_\ell\) commutes with \(\psi^*\).  This implies that \(h\mapsto \psi^* h\)
maps eigentensors to eigentensors with the same eigenvalue.  In particular, we can consider eigentensors that are invariant under the
action of \(\mf G\).

\medskip
We obtain the following.

\begin{main}		\label{MT2} 
For any integers $n_1, n_2\geq2$ with $n_1+n_2\leq8$, consider the Ricci-flat metric $g$ on
$\mfd=\R^{n_1+1}\times\mc S^{n_2}$ produced by Böhm's construction.
\begin{enumerate}[\upshape (a)]
\item The Lichnerowicz Laplacian $\Delta_\ell$ on $(\mfd, g)$ acting in $L^2(\sym\mfd)$ is self adjoint.
\item The Lichnerowicz Laplacian $\Delta_\ell$ on $(\mfd, g)$ acting in $h^{0,\alpha}(\sym\mfd)$ generates an analytic semigroup.
\item The spectrum of $\Delta_\ell$ is the same in $L^2(\sym\mfd)$ as in $h^{0,\alpha}(\sym\mfd)$.
\item The essential spectrum of $\Delta_\ell$ is $(-\infty, 0]$.
\item The point spectrum of $\Delta_\ell$ consists of an infinite sequence of positive eigenvalues
\[
\lambda_0\geq \lambda_1\geq \lambda_2\geq \cdots \to 0.
\]
Their corresponding eigentensors decay exponentially.
\item The eigenvalues of \(\Delta_\ell\) corresponding to \(\mf G\)-invariant eigentensors are simple, and there are still infinitely
many of these.
\end{enumerate}
Finally, for any $N\in\N$, the $N$-dimensional family $\ms M$ of ancient solutions corresponding to $\lambda_0, \dots, \lambda_{N-1}$ produced
by Main Theorem~\ref{MT1} contains uncountably many solutions of Ricci--DeTurck flow that are pairwise geometrically distinct for as long as they exist.
\end{main}

We explain the dimension restriction $n_1+n_2\leq 8$ in \S~\ref{Lichnerowicz} below.  The last statement in the theorem addresses the
issue that the ancient solutions provided by Main Theorem~\ref{MT1} are solutions of the Ricci-DeTurck flow, and that it is in
principle possible that they all come about by applying different diffeomorphisms to one and the same ancient solution. We prove that this cannot happen.

\medskip

There is a vast literature on dynamic and linear stability of Ricci flow, sometimes considering the second variation of a Perelman
functional and sometimes utilizing the spectrum of the Lichnerowicz Laplacian. Results concerning the stability of some noncompact
solutions may be found in~\cite{SSS08} and~\cite{SSS11}.  Results about the instability of various noncompact solutions may be found
in~\cite{GiHaPo2003} and~\cite{DO23}. This list is very, very far from inclusive. For example, see~\cite{MR4534486} and the
recent~\cite{deruelle2025orbifoldsingularityformationancient}.
    
\subsection{Outline of the paper}

Main Theorem~\ref{MT1} is proved in \S~\ref{sec:DeTurck}--\ref{Spectrum of LL}.  In \S~\ref{sec:DeTurck}, we review DeTurck's
derivation of a quasilinear strictly parabolic \textsc{pde} equivalent to Ricci flow, in a form adapted to our subsequent needs. In
\S~\ref{sec:AnalyticSemigroups}, we explain how the linearization of DeTurck flow generates an analytic semigroup, a claim which we
prove in \S~\ref{sec:L generates}, and we review results of Chaperon~\cite{Chaperon2002, Chaperon2004} and
Irwin~\cite{IrwinProofPseudoStableMfdThm,IrwinBook} which motivate the approach used in this paper. In \S~\ref{Spectrum of LL}, we
investigate the essential spectrum of the Lichnerowicz Laplacian on the Ricci-flat manifolds we study.

Main Theorem~\ref{MT2} is proved in \S~\ref{Lichnerowicz}--\ref{sec:Distinct}.  In \S~\ref{Lichnerowicz}, we demonstrate that the
Böhm stationary solutions have infinitely many unstable eigenvalues, and that those with $\mf G$-invariant eigentensors are simple.
Then in \S~\ref{sec:Distinct}, we show that these unstable perturbations of the Böhm solutions yield uncountably many geometrically distinct solutions of Ricci flow.

In Appendices \S~\ref{Geometric data} and \S~\ref{Tensor Hessian}, we derive various standard facts about doubly warped product
geometries like the Böhm solutions. In Appendix~\ref{sec:invariant manifold construction}, we outline a proof of the unstable
manifold construction, Theorem~\ref{UnstableManifoldTheorem}, used in this work. In Appendix~\ref{OscillationBound}, we prove
that oscillations in the metric components of any solution originating from an unstable pertubation of the B\"ohm solutions we consider
cannot increase in time. Finally, in Appendix \ref{sec:function spaces}, for completeness, we briefly review the main function spaces used herein.

\section{An approach to DeTurck's trick} \label{sec:DeTurck}

Here, we derive the quasilinear heat equation that results from DeTurck's trick (See~\cite{MR0697987}).

\subsection{Perturbed metrics and connections}

Let $g$ be a fixed background metric on a manifold $\mfd $, and consider a perturbation of $g$ given by
\[
  \gh_{ij} = g_{ij}+h_{ij}.
\]
Except as noted below, we raise and lower indices with $g$, so $\gh^{ij} = g^{ip}g^{jq} \gh_{pq}$ is in general not the inverse of
$\gh_{ij}$.  Because of this, we write the inverse of $\gh_{ij}$ as $G^{ij}$, \emph{i.e.,} by definition, we have
\[
  \gh_{ik}G^{kj} = \delta_i^j.
\]
For small $h$, the inverse is given by the geometric series
\begin{equation}		\label{geometric series for Gij}
  G^{ij} = g^{ij} + \sum_{\ell = 1}^\infty (-1)^{\ell}\, g^{ik_1} h^{k_2}_{k_1}h^{k_3}_{k_2}\cdots h^j_{k_\ell} .
\end{equation}
This series converges at any point $p\in\mfd$ such that $\|\hat h(p)\|<1$, where $\|\hat h(p)\|$ is the operator norm of the linear
transformation $\hat h^i_j = g^{ik}h_{kj}$ on the vector space $T_p\mfd$ with inner product $g(p)$.  The convergence is uniform if
$\|\hat h\| = \sup_{p\in\mfd}\|\hat h(p)\|<1$.  If, in addition, the covariant derivatives $\nabla^j\hat h$ of order $j\leq N$ are
uniformly $\alpha$-Hölder continuous, then the series~\eqref{geometric series for Gij} converges in $C^{N, \alpha}$.
\medskip

We denote the Levi-Civita connections of $g$ and $\gh$ by $\D$ and $\tilde\nabla$, respectively.

\subsubsection*{Generic lower order terms}
In this section, it will be convenient to let $Q=Q_{jk}\,\mr dx^j\,\mr dx^k$ stand for a generic quadratic polynomial tensor in
$\D h$ of the form
\begin{equation}
  \label{eq:quadratic-in-grad-h}
  Q = Q_0(g, h) + Q_1(g, h)*\D h + Q_2(g, h)*(\D h\otimes\D h),
\end{equation}
whose coefficients $Q_0, Q_1, Q_2$ are rational functions of $g, h$ defined for $\|h\|<1$ that satisfy
\[
  Q_0(h) = \mc O(\|h\|^2), \quad Q_1(h) = \mc O(\|h\|), \quad Q_2(h) = \mc O(1)\qquad \text{as}\quad \|h\| \to 0.
\]
Below, we allow $Q$ to change from line to line.  \medskip

Our main result in this section is the following version of DeTurck's trick:

\begin{lemma}\label{thm:DeTurck}
  There exists a tensor field $Q_{jk}(h, \D h)$ such that, if $\gh=g +h$ for a fixed background metric $g$, the tensor field $h$ is
  a solution of
  \begin{equation} \label{eq:} h_t = \ope F[h]
  \end{equation}
  with
  \begin{multline}\label{eq:DeT}
    \ope F_{jk}[h] = -2 R_{jk} +\Big\{G^{\ell m}(h)\D_\ell \D_m h_{jk}
    + 2 R_{\ell jk}^m h_m^\ell  - R_j^\ell h_{\ell k}  -  R_k^m h_{jm}\Big\}\\
    + Q_{jk}(h, \D h)
  \end{multline}
  if and only if $\gh_{jk}$ satisfies
  \begin{equation}\label{eq:RDT}
    \partial_t \gh_{jk} = -2\tilde R_{jk}+(\mc L_X \gh)_{jk}.
  \end{equation}
  
  The vector field $X$ is defined as follows: if $\gamma$ is the tensor field representing the difference of the Levi--Civita
  connections, \emph{i.e.,} if $\tilde\nabla_i W^k - \D_i W^k = \gamma_{ij}^k W^j$ for all vector fields $W$, then $X$ is the
  contraction\footnote{ The reader may readily verify that this definition is equivalent to the traditional formula,
    \[
      X^k = G^{ij}(h)G^{k\ell}(h)\Big(\cv_i h_{j\ell}-\frac12\cv_\ell h_{ij}\Big).
  \]}
  \begin{equation} \label{eqn:slick X} X^k \stackrel{\rm def}= G^{ij}\gamma_{ij}^k.
  \end{equation}
\end{lemma}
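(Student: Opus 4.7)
The plan is a direct calculation, using standard formulas comparing the Levi--Civita geometries of $g$ and $\tilde g = g + h$. Since $g$ is time-independent, $\partial_t \tilde g_{jk} = \partial_t h_{jk}$, so the equivalence of \eqref{eq:} and \eqref{eq:RDT} reduces to the pointwise tensor identity
\[
-2\tilde R_{jk} + (\mc L_X \tilde g)_{jk} = \ope F_{jk}[h].
\]
Both sides are polynomial in $h$, $\D h$, and $\D^2 h$ with coefficients rational in $h$, so once this identity is established the biconditional is automatic.

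The first step is to compute the connection-difference tensor explicitly. Using $\tilde\nabla \tilde g = 0$ and $\D g = 0$, one finds
\[
\gamma^k_{ij} = \tfrac{1}{2} G^{k\ell}\bigl(\D_i h_{j\ell} + \D_j h_{i\ell} - \D_\ell h_{ij}\bigr),
\]
which simultaneously verifies the traditional formula for $X^k = G^{ij}\gamma^k_{ij}$ given in the footnote and exhibits $\gamma$ as linear in $\D h$ with coefficients that are $\mc O(1)$ in $h$. Substituting into the standard curvature formula
\[
\tilde R^\ell{}_{ijk} = R^\ell{}_{ijk} + \D_i \gamma^\ell_{jk} - \D_j \gamma^\ell_{ik} + \gamma^\ell_{ip}\gamma^p_{jk} - \gamma^\ell_{jp}\gamma^p_{ik}
\]
and contracting yields $\tilde R_{jk} = R_{jk} + \D_\ell \gamma^\ell_{jk} - \D_j \gamma^\ell_{\ell k} + (\gamma\ast\gamma)_{jk}$.

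Next I would compute $(\mc L_X \tilde g)_{jk} = \tilde\nabla_j X_k + \tilde\nabla_k X_j$ with $X_k = \tilde g_{k\ell} X^\ell$, and convert $\tilde\nabla$ to $\D$ by inserting $\gamma$ (an operation whose error terms are $Q$-type). The principal part of $\tilde\nabla_j X_k + \tilde\nabla_k X_j$ cancels the non-elliptic second-order terms in $-2\tilde R_{jk}$ that involve derivatives of the trace $\gamma^\ell_{\ell k}$, leaving only the true divergence $-2\D_\ell \gamma^\ell_{jk}$ as principal content. Expanding $-2\D_\ell \gamma^\ell_{jk}$ via the explicit formula for $\gamma$, the symmetric combination in the bracket kills two of the three second-derivative terms at top order, leaving the principal symbol $G^{\ell m}\D_\ell \D_m h_{jk}$. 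Commutators that arise when reordering $\D_\ell \D_j h_{mk}$ into $\D_j \D_\ell h_{mk}$ produce Riemann-curvature terms of $g$ acting linearly on $h$; combined with the standalone $-2R_{jk}$, this recovers exactly the Lichnerowicz expression $-2R_{jk} + 2R^m_{\ell jk} h_m^\ell - R_j^\ell h_{\ell k} - R_k^m h_{jm}$ displayed in \eqref{eq:DeT}.

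Every remaining contribution is either a product of two factors of $\gamma$, a derivative of $G^{ij} - g^{ij}$ multiplied by a first derivative of $h$, or a commutator correction involving the curvature of $g$ paired with $\gamma$. In each case the result fits the template \eqref{eq:quadratic-in-grad-h} for a generic $Q_{jk}(h, \D h)$ and can be absorbed into it. The main obstacle is the combinatorial bookkeeping: keeping the principal $G^{\ell m}\D_\ell \D_m h_{jk}$, the Lichnerowicz curvature block, and the $Q$-type remainder cleanly separated. Two small subtleties deserve attention: first, $G^{\ell m}$ depends on $h$ but not on $\D h$, so the commutator of $\D_\ell \D_m$ with $G^{\ell m}$ applied to $h_{jk}$ contributes only $Q$-type terms; and second, the cancellation of the "bad" second-order terms $\D_j \gamma^\ell_{\ell k}$ and $\D_k \gamma^\ell_{\ell j}$ requires both summands of $\mc L_X \tilde g$, which is precisely why the DeTurck trick symmetrizes.
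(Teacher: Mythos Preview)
Your approach is essentially the paper's: compute $\gamma$ explicitly, express $\tilde R_{jk}$ via $\gamma$ and $Q$-terms, compute $\mc L_X\tilde g$, and extract the principal symbol plus the Lichnerowicz curvature block through commutator identities. However, your description of \emph{how} the cancellations occur is garbled in a way that would not survive a careful computation.

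The expansion
\[
-2\D_\ell\gamma^\ell_{jk} = G^{\ell m}\D_\ell\D_m h_{jk} - G^{\ell m}\bigl(\D_\ell\D_j h_{km} + \D_\ell\D_k h_{jm}\bigr) + Q_{jk}
\]
has three second-order terms, and the two ``bad'' ones carry the \emph{same} sign: they do not cancel one another. So your claim that ``the symmetric combination in the bracket kills two of the three second-derivative terms at top order'' is false. What actually happens (and what the paper carries out) is that the covector $X_k = \tilde g_{k\ell}G^{ij}\gamma^\ell_{ij}$ splits as $X_k = V_k - U_k$ with $V_k = G^{\ell m}\D_\ell h_{mk}$ and $U_k = \tfrac12 G^{ij}\D_k h_{ij} = \gamma^\ell_{\ell k}$. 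The $U$-part of $\mc L_X\tilde g$ cancels the $+2\D_j\gamma^\ell_{\ell k}$ term in $-2\tilde R_{jk}$ (the cancellation you do identify), but it is the \emph{$V$-part} of $\mc L_X\tilde g$ that cancels the two bad terms displayed above, after commuting $\D_\ell\D_j \leftrightarrow \D_j\D_\ell$ to produce the curvature contributions $2R^m_{\ell jk}h_m^\ell - R_j^\ell h_{\ell k} - R_k^m h_{jm}$. Your sketch conflates these two separate roles of $\mc L_X$ and asserts an internal cancellation within $-2\D_\ell\gamma^\ell_{jk}$ that does not exist. Once you track the $V$- and $U$-pieces separately, the bookkeeping goes through exactly as the paper has it.
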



We proceed to the derivation of these identities and the proof of the Lemma.

\subsection{The Levi-Civita connections}
As noted above, the Levi-Civita connections corresponding to $\gh$ and $g$ are $\tilde\nabla$ and $\D$, respectively.  The tensor
field $\gamma$ is given by
\begin{equation}
\gamma_{ij}^k =  \frac 12 G^{k\ell}\bigl\{ \D_i h_{j\ell}+\D_j h_{i\ell} - \D_\ell h_{ij}\bigr\}.
\end{equation}
A trace simplifies to
\begin{equation} \label{eq:D log det M} \gamma_{ij}^i = \frac 12 G^{im}\D_j \gh_{im} =\frac 12 \D_j \log\det\bigl(I + \hat h\bigr),
\end{equation}
where $\hat h = g^{-1}\circ h$, as above.

\subsection{Lie derivative of the perturbed metric}
If $W^i$ is any vector field, then the Lie derivative of the metric $\tilde g_{jk}$ along $W$ is given by
\begin{equation}\label{eq:3}
\mc L_W \gh_{jk} =\tilde \nabla_jW_k+ \tilde\nabla_k W_j
= \D_jW_k + \D_kW_j + 2\gamma_{jk}^\ell  W_\ell ,
\end{equation}
where $W_j = \gh_{ij}W^i$. This is the one instance where we use the perturbed metric $\gh_{ij}$ to lower an index.

\subsection{Perturbed curvature tensors}
The Riemann tensors of the background and perturbed metrics are related by
\begin{equation}
\tilde R_{ijk}^\ell  = R_{ijk}^\ell 
+ \D_i\gamma_{jk}^\ell  - \D_j\gamma_{ik}^\ell 
+ \gamma_{im}^\ell \gamma_{jk}^m - \gamma_{jm}^\ell \gamma_{ik}^m.
\end{equation}
For the Ricci tensors, we have
\[
\tilde R_{jk} = R_{jk} + \D_\ell \gamma_{jk}^\ell - \D_j\gamma_{\ell k}^\ell + \gamma_{\ell m}^\ell \gamma_{jk}^m - \gamma_{jm}^\ell
\gamma_{\ell k}^m.
\]
The ``$\gamma*\gamma$'' terms are exactly of the form $Q_{jk}$ introduced above, so we have
\begin{equation} \label{eq:1}
-2\tilde R_{jk} = -2 \D_\ell \gamma^\ell _{jk} + 2\D_j\gamma^\ell _{\ell k} + Q_{jk}-2R_{jk}.
\end{equation}

The first term on the right in~\eqref{eq:1} is a divergence and can be rewritten as
\begin{align*}
  -2\D_\ell \gamma_{jk}^\ell 
  &= - \D_\ell \Bigl\{G^{\ell m}\bigl(\D_j h_{km}+\D_k h_{jm} - \D_m h_{jk}\bigr) \Bigr\}\\
  &=  -G^{\ell m}\bigl(\D_\ell \D_j h_{km}+\D_\ell \D_k h_{jm} - \D_\ell \D_m h_{jk}\bigr)  + Q_{jk}\\
  &= G^{\ell m}\D_\ell \D_m h_{jk} -  G^{\ell m}\bigl(\D_\ell \D_j h_{km}+\D_\ell \D_k h_{jm} \bigr)  + Q_{jk}.
\end{align*}
The first term in the last line above is an elliptic operator applied to $h_{jk}$.

We can rewrite the second group of terms in the last line above as the Lie derivative of $g$ plus lower order terms by switching the order of
derivatives.  Namely, we compute that
\begin{align*}
G^{\ell m}\D_\ell \D_j h_{km}
&= G^{\ell m}\bigl\{\D_j\D_\ell  h_{km}  - R_{\ell jk}^p h_{pm} - R_{\ell jm}^p h_{kp} \bigr\} \\
&= \D_j\bigl(G^{\ell m}\D_\ell  h_{km}\bigr)  - G^{\ell m}R_{\ell jk}^p h_{pm} - G^{\ell m}R_{\ell jm}^p h_{kp}   + Q_{jk}\\
&= \D_j\bigl(G^{\ell m}\D_\ell  h_{km}\bigr)  - g^{\ell m}R_{\ell jk}^p h_{pm} - g^{\ell m}R_{\ell jm}^p h_{kp}   + Q_{jk}\\
&= \D_j\bigl(G^{\ell m}\D_\ell  h_{km}\bigr)  - R_{\ell jk}^p h_p^\ell  + R_j^p h_{kp}   + Q_{jk} ,
\end{align*}
and hence obtain
\[
G^{\ell m}\bigl(\D_\ell \D_j h_{km}+\D_\ell \D_k h_{jm} \bigr)
=
\D_j V_k + \D_k V_j - 2R_{\ell jk}^p h_p^\ell  + R_j^ph_{kp} + R_k^p h_{jp} + Q_{jk},
\]
where $V$ is the covector
\[
V_k \stackrel{\rm def}= G^{\ell m}\D_\ell h_{mk}.
\]
Using~\eqref{eq:3}, we then have
\begin{multline*}
G^{\ell m}\bigl(\D_\ell \D_j h_{km}+\D_\ell \D_k h_{jm} \bigr)
=
\mc L_V \gh_{jk} + 2\gamma_{jk}^\ell V_\ell  \\
- 2R_{\ell jk}^p h_p^\ell  + R_j^ph_{kp} + R_k^p h_{jp} + Q_{jk}.
\end{multline*}
Since $V_i=G^{\ell m}\D_\ell h_{mi}$, the term $\gamma_{jk}^\ell  V_\ell $ may be absorbed into $Q_{jk}$, so we find that
\begin{equation}
G^{\ell m}\bigl(\D_\ell \D_j h_{km}+\D_\ell \D_k h_{jm} \bigr)
=
\mc L_V \gh_{jk} 
- 2R_{\ell jk}^p h_p^\ell  + R_j^ph_{kp} + R_k^p h_{jp} + Q_{jk}.
\end{equation}
\medskip

Now by~\eqref{eq:D log det M}, the second term on the right in \eqref{eq:1} is
\[
2\D_j\gamma_{\ell k}^\ell  = \D_j\D_k \log \det\bigl(I+\hat h\bigr) = \D_j U_k + \D_k U_j,
\]
where $U$ is the covector
\[
U_k \stackrel{\rm def}= \frac{1}{2} \D_k \log \det \bigl(I+\hat h\bigr).
\]
Therefore, we collect terms and conclude that
\begin{multline*}
-2R_{jk} + \mc L_V \gh_{jk} - \mc L_U g_{jk} = G^{\ell m}\D_\ell \D_mh_{jk}   + 2R_{\ell jk}^p h_p^\ell  - R_j^ph_{kp}  - R_k^p h_{jp}
-2R_{jk} + Q_{jk}.
\end{multline*}
Choosing $X=(V-U)^\sharp$, namely,
\[
X^k =G^{ij}\gamma_{ij}^k = G^{ij}G^{k\ell}\Big(\cv_i h_{jk}-\frac12\cv_\ell h_{ij}\Big),
\]
and noting that $\mc L_X g_{jk} = \mc L_X \gh_{jk} + Q_{jk}$, we complete the proof of Lemma~\ref{thm:DeTurck}.

\section{Short time existence and smooth well-posedness}	\label{sec:AnalyticSemigroups}

\subsection{Analytic semigroups}

We write
\[
\sym \mfd \stackrel{\rm def}= \TSM\otimes_{\rm sym}\TSM
\]
for the bundle of symmetric $(2,0)$ tensors on $\mfd $.
\smallskip

We assume in this section that $(\mfd, g)$ has bounded geometry.

\begin{theorem}		\label{lem:an-sgp}
Let \(\alpha\in(0,1)\), and \(A^{pq}\), \(B^{pqr}_{jk}\), and \(C^{pq}_{jk}\) be tensor fields on \(\mfd \) with \(A\in C^1\),
and \(B,C\in h^{0,\alpha}\).  Assume that \(A^{pq}\) is uniformly elliptic, and that the gradients \(\nabla A^{pq}, \nabla B^{pqr}_{jk}\) are
uniformly bounded.  Then the linear operator
\[
\ope L \colon h^{2,\alpha}(\sym\mfd ) \to h^{0,\alpha}(\sym\mfd )
\]
defined by
\[
\ope L h_{jk} = A^{pq}\D_p \D_q h_{jk} + B^{pqr}_{jk} \D_p h_{qr} + C^{pq}_{jk}h_{pq}
\]
generates of an analytic semigroup $e^{t\ope L}$ on $h^{0,\alpha}(\sym\mfd )$.
\end{theorem}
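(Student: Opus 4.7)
\textbf{Proof plan for Theorem~\ref{lem:an-sgp}.}
The strategy is the classical \emph{freezing of coefficients / partition of unity} scheme, adapted to manifolds of bounded geometry, together with known analytic semigroup results for constant-coefficient strongly elliptic systems on $\R^n$ in little-Hölder spaces (see Lunardi~\cite{MR1329547} or Stewart, as reviewed in Acquistapace--Terreni). The goal is to establish the sectorial resolvent estimate
\[
\bigl\|(\lambda - \ope L)^{-1} h\bigr\|_{h^{2,\alpha}} + |\lambda|\,\bigl\|(\lambda-\ope L)^{-1}h\bigr\|_{h^{0,\alpha}} \leq C\,\|h\|_{h^{0,\alpha}}
\]
for $\lambda$ in a sector $\Sigma_\theta=\{\lambda\in\C\colon |\arg(\lambda-\omega)|<\theta\}$ with $\theta>\pi/2$, from which analytic semigroup generation follows by the standard characterization.

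First I would exploit bounded geometry to produce a uniformly locally finite atlas of normal coordinate balls $\{\ball_{r_0}(p_i)\}$ of a fixed radius $r_0>0$ depending only on the bounds on $|\Rm|$ and $\mr{inj}(g)$, in which the pulled-back metric $g$ and all Christoffel symbols lie in bounded $C^{1,\alpha}$ balls, and a subordinate partition of unity $\{\chi_i\}$ with $\{\chi_i^2\}$ still a partition of unity and uniform bounds on all derivatives. I would also trivialize $\sym\mfd$ in each chart by parallel transport from $p_i$ along radial geodesics, so that a section $h$ of $\sym\mfd$ becomes a bounded collection of symmetric matrix-valued functions on balls in $\R^n$. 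In this local trivialization, $\ope L$ takes the form $\tilde A^{pq}\partial_p\partial_q + \tilde B\cdot\partial + \tilde C$ plus zero-order corrections coming from the connection, where the principal symbol is (up to a tensorial twist) scalar: $\tilde A^{pq}\xi_p\xi_q \cdot I$ acting on the matrix fiber. Uniform ellipticity of $A^{pq}$ with uniformly bounded $\nabla A$ thus transfers to a uniformly strongly elliptic system on $\R^n$ with uniform bounds.

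Next, I would freeze coefficients at each $p_i$ and invoke the Euclidean theory: for the constant-coefficient strongly elliptic system $\tilde A^{pq}(p_i)\partial_p\partial_q$ on $\R^n$ acting on symmetric matrices, the operator generates an analytic semigroup on the little-Hölder space $h^{0,\alpha}(\R^n)$, and one has the sectorial resolvent bound with constants depending only on the ellipticity modulus and the sup-norm of the coefficients. Then a standard Stewart-type perturbation argument handles the variation of $\tilde A$ across a ball of radius $r_0$: the difference $[\tilde A^{pq}(x)-\tilde A^{pq}(p_i)]\partial_p\partial_q$ is a small perturbation in operator norm from $h^{2,\alpha}$ into $h^{0,\alpha}$ when $r_0$ is small (using $\nabla A$ uniformly bounded and an interpolation inequality), and the lower-order terms $\tilde B\cdot\partial+\tilde C$ are likewise absorbed for $|\lambda|$ large by interpolation, exploiting that $B,C\in h^{0,\alpha}$. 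This yields, for each chart, a local resolvent $R_i(\lambda)$ with uniform sectorial bounds.

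Finally, I would assemble a global parametrix
\[
R(\lambda)h \;=\; \sum_i \chi_i\, R_i(\lambda)\bigl(\chi_i h\bigr),
\]
and verify that $(\lambda-\ope L)R(\lambda) = I + K(\lambda)$, where $K(\lambda)$ collects commutator terms $[\ope L,\chi_i]R_i(\lambda)(\chi_i\,\cdot\,)$. Because $[\ope L,\chi_i]$ is first order with uniformly bounded $h^{0,\alpha}$ coefficients supported in the annular overlap of the $\chi_i$, and because $R_i(\lambda)$ gains two derivatives with norm bounded by $C/|\lambda|^{1/2}$ on the intermediate space (interpolating $h^{0,\alpha}$ and $h^{2,\alpha}$), the operator $K(\lambda)$ has norm $O(|\lambda|^{-1/2})$ on $h^{0,\alpha}$ in a suitable sector. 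For $|\lambda|$ large, $I+K(\lambda)$ is invertible by Neumann series, yielding the global resolvent and the desired sectorial estimate; a translation $\ope L\mapsto \ope L-\omega$ for large $\omega$ places the sector correctly. The main obstacle will be the commutator/patching step: one must check carefully that the uniformly locally finite cover (existing by bounded geometry) makes $\sum_i$ preserve the $h^{0,\alpha}$-norm up to a uniform constant, and that the interpolation inequalities used to absorb lower-order terms and to control commutators hold uniformly on the manifold, which again relies on the bounded-geometry hypothesis through uniform local Schauder estimates. Analytic semigroup generation on $h^{0,\alpha}(\sym\mfd)$ then follows from the general Hille--Yosida--Lumer--Phillips criterion in the sectorial form.
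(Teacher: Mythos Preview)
Your proposal is correct and follows essentially the same route as the paper's proof in \S\ref{sec:L generates}: both use the bounded-geometry cover with a partition of unity satisfying $\sum_a\varphi_a^2=1$, local resolvents built by freezing coefficients and perturbing the constant-coefficient Euclidean model, the global parametrix $\sum_a\varphi_a R_a(\lambda)(\varphi_a\,\cdot\,)$, and the commutator error $[\ope L,\varphi_a]$ controlled via the $O(|\lambda|^{-1/2})$ interpolation bound followed by Neumann inversion. The only step you do not mention explicitly is left invertibility of $\lambda-\ope L$ (injectivity), which the paper handles in \S4.5.6 by the dual parametrix argument $\ope S(\lambda)(\lambda-\ope L)=\boldsymbol{1}+\text{small}$; you should add this, but it is routine once the right-inverse construction is in place.
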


We provide a proof in \S~\ref{sec:L generates}. In the remainder of this section, we recall some well-known consequences of
Theorem~\ref{lem:an-sgp}.  
In particular, we recall the short-time existence and analytic dependence on initial data for nonlinear parabolic equations that can be proved using analytic semigroup methods. Note that the arguments in the remainder of this section work even if $(\mc M,g)$ is not Ricci flat, but merely in $h^{2,\alpha}$. 
While this result suffices for our
purposes, we should point out that Miles Simon, using different methods, has significantly relaxed the required initial regularity.
See the recent survey~\cite{MR4862085} and the references therein.  \medskip

For any background metric $g$, the right-hand side of \eqref{eq:DeT} defines a nonlinear map $\ope F$ on the open subset of
$h\in h^{2,\alpha}(\sym\mfd)$ for which $g$ and $g+h$ define equivalent metrics, \emph{i.e.,}
\[
\mc O^{2,\alpha}_g = \bigl\{ h\in h^{2,\alpha}(\sym\mfd) \mid \exists \epsilon>0 : \epsilon g \leq g+h \leq \tfrac{1}{\epsilon}g \bigr\}.
\]
The map
\[
\ope F : \mc O^{2,\alpha}_g \subset h^{2,\alpha}(\sym\mfd )\rightarrow h^{0,\alpha}(\sym\mfd )
\]
is given by
\[
\big(\ope F(h)\big)_{jk} \stackrel{\rm def}= G^{\ell m}(h)\D_\ell \D_m h_{jk}
+ 2R_{\ell jk}^p h_p^\ell  - R_j^ph_{kp}  - R_k^p h_{jp}\\
-2R_{jk} + Q_{jk}(h, \D h).
\]
The map $\ope F$ depends on the chosen background metric, but we will not include this in our notation. 

The map $\ope F$ is real analytic, and its Fréchet derivative at $h\in \mc O^{2,\alpha}_g$ is the linear operator
\begin{align*}
  \mr d\ope F_h\cdot\, \eta
  =&  G^{\ell m}(h)\D_\ell \D_m \eta_{jk}    + \frac{\partial Q_{jk}}{\partial \D h}\cdot \D \eta\\
   &\quad + 2\Rm*\eta  - 2\Rc* \eta
     + \Bigl\{(\D_\ell \D_m h)\frac{\partial G^{\ell m}}{\partial h} + \frac{\partial Q_{jk}}{\partial h}\Bigr\}\cdot \eta ,
\end{align*}
which is exactly of the type $\ope L$ from Theorem~\ref{lem:an-sgp}. Furthermore, when $h=0$, the operator $\mr d\ope F_0$ is precisely the
Lichnerowicz Laplacian~\eqref{Lich-Lap}.  \medskip

For every $h_0\in \mc O^{2,\alpha}_g$, there exists a time $T_0\in(0, \infty]$ for which the initial value problem of~\eqref{eq:DeT}
has a unique classical solution,
\[
h_{jk} \in C^0\Bigl([0, T_0); \mc O^{2,\alpha}_g \Bigr) \cap C^1\Bigl([0, T_0); h^{0,\alpha}(\sym\mfd ) \Bigr),
\]
for which $h_{jk}(0, x) = h_{0, jk}(x)$.

The existence time $T_0$ is a lower semicontinuous function of $h_0\in \mc O^{2,\alpha}_g$, and the domain of the semiflow,
\[
\mc D = \Bigl\{(t, h_0) \in [0,\infty)\times\mc O^{2,\alpha}_g \mid 0\leq t < T_0(h_0) \Bigr\},
\]
is open in $[0, \infty)\times \mc O^{2,\alpha}_g$.

The time-$t$ map $h_0\mapsto \phi^t(h_0) = h_{0, jk}(t, \cdot)$ is real analytic.

For justification of these claims, see~\cite{MR1059647}.

\subsection{Linearization near a fixed point}
If our background metric $g$ is Ricci-flat then $h=0$ is a fixed point for the DeTurck flow, \textit{i.e.,} $\phi^t(0)=0$ for all
$t\geq 0$.  By definition of the Fréchet derivative, Ricci-DeTurck flows $\gh(t) = g+h(t) $ near the Ricci-flat metric $g$ are
determined by their initial values $\gh(0)= g+h(0)$ via
\[
\gh(t)	= g + \phi^t(h(0))
		= g + \mr d\phi^t(h_0)) \cdot h(0) + o(\|h(0)\|).
\]

By the chain rule, we have
\[
  \mr d\phi^{t+s}(0) = \mr d\phi^t(0)\cdot \mr d\phi^s(0)
\]
for all $t, s\geq 0$, and hence the operators $\{\mr d\phi^t(0)\}$ form semigroup.  This semigroup is analytic, and its generator is
$\mr d\ope F_0$, as one sees by differentiating the equation $\partial_t h = \ope F[h]$ with respect to the initial value $h(0)$.  In
the sense of semigroups, we therefore have\footnote{ For Fr\'echet differentiable map $\mc H$, we interchangeably
use either notation $\mr d\mc H_0 =\mr d\mc H(0)$.}
\[
\mr d\phi^t(0) = \exp\bigl(t\mr d\ope F_0\bigr).
\]
Since $\mr d\ope F_0$ generates an analytic semigroup, the spectra of $\mr d\phi^t(0)$ and $\mr d\ope F_0$ are related.  The following
result is particularly relevant to our situation:
\begin{lemma}
Let $E$ be a Banach space, and let $A:D(A)\to E$ generate an analytic semigroup $e^{tA}$.  If for some $\alpha\in\R$, the spectrum of
$A$ is disjoint from the line $\{z\in\C : \Re z = \alpha\}$, then the spectrum of $e^{tA}$ is disjoint from the circle
$\{z\in\C : |z| = e^{t\alpha}\}$.
\end{lemma}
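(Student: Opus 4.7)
The plan is to deduce this lemma from the spectral mapping theorem for analytic semigroups, which asserts that for $t>0$ and any generator $A$ of an analytic semigroup on a Banach space $E$,
\[
\sigma\bigl(e^{tA}\bigr)\setminus\{0\} \;=\; \exp\bigl(t\,\sigma(A)\bigr).
\]
This is a classical result in the theory of $C_0$-semigroups. Granted this identity, the conclusion is a one-line contradiction: if $\lambda\in\sigma(e^{tA})$ satisfies $|\lambda|=e^{t\alpha}$, then $\lambda\neq 0$ since $e^{t\alpha}>0$, so $\lambda=e^{t\mu}$ for some $\mu\in\sigma(A)$, and taking moduli gives $e^{t\Re\mu}=e^{t\alpha}$. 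Hence $\Re\mu=\alpha$, contradicting the hypothesis that $\sigma(A)$ avoids the vertical line $\Re z=\alpha$.

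For a self-contained writeup I would briefly indicate why the two inclusions in the spectral mapping identity hold. The inclusion $\exp(t\sigma(A))\subset\sigma(e^{tA})$ is valid for every $C_0$-semigroup and follows from the operator identity
\[
e^{tA}-e^{t\mu}I \;=\; (A-\mu)\int_0^t e^{(t-s)\mu}e^{sA}\,ds \;=\; \int_0^t e^{(t-s)\mu}e^{sA}\,ds\,(A-\mu),
\]
which shows that if $e^{t\mu}$ lies in the resolvent set of $e^{tA}$, then the bounded operator $(e^{tA}-e^{t\mu}I)^{-1}\int_0^t e^{(t-s)\mu}e^{sA}\,ds$ is a two-sided inverse of $A-\mu$, so $\mu$ lies in the resolvent set of $A$.

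The reverse inclusion $\sigma(e^{tA})\setminus\{0\}\subset\exp(t\sigma(A))$ is the deeper statement, and it is the step where analyticity of the semigroup is essential; this is the place I expect to be the main technical obstacle and would treat by citation rather than reproof. The standard approach uses the Dunford functional calculus: because for an analytic semigroup the resolvent satisfies $\|R(\lambda,A)\|=O(|\lambda|^{-1})$ along vertical lines inside the sector of analyticity, one may represent $e^{tA}$ as a contour integral whose spectral content is tracked precisely by $\exp(t\sigma(A))$, thereby forcing every nonzero point of $\sigma(e^{tA})$ into that set. Taken together, these two inclusions give the spectral mapping identity that drives the contradiction above.
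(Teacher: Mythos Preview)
Your proof is correct. You invoke the full spectral mapping theorem for analytic semigroups, $\sigma(e^{tA})\setminus\{0\}=\exp(t\sigma(A))$, and then read off the conclusion by a one-line contradiction. This is a genuinely different route from the paper's argument, which instead exploits sectoriality directly: since $\sigma(A)$ lies in a sector of half-angle less than $\pi/2$, any vertical line meets the sector in a bounded segment, so disjointness of $\sigma(A)$ from $\{\Re z=\alpha\}$ upgrades to disjointness from a whole strip $\{\alpha-\epsilon\le\Re z\le\alpha+\epsilon\}$. The paper then takes the Riesz spectral projection associated with this splitting, decomposes $E=E_-\oplus E_+$ into invariant subspaces, and uses only the growth-bound statements $\sigma(e^{tA_-})\subset\{|z|\le e^{(\alpha-\epsilon)t}\}$ and $\sigma(e^{tA_+})\subset\{|z|\ge e^{(\alpha+\epsilon)t}\}$ for the restrictions. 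Your approach is shorter and conceptually cleaner, but outsources the nontrivial inclusion $\sigma(e^{tA})\setminus\{0\}\subset\exp(t\sigma(A))$ to a citation; the paper's approach is more self-contained in that it avoids the full spectral mapping theorem, needing only the Riesz decomposition and the elementary relation between spectral bound and growth bound on each piece.
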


\begin{proof}
We outline the proof of this classical statement about (analytic) semigroups.  The spectral theory and functional calculus of unbounded
operators can be found in Chapters VII and VIII of Dunford and Schwartz's book \cite{MR1009162} --- see also\cite{MR915552}.

Since $\sigma(A)$ is a closed subset of the complex plane that is contained in a sector $\{z\in\C : |\arg(a-z)|<\vartheta\}$ for some
$a>0, \vartheta<\frac{\pi}{2}$, there is an $\epsilon>0$ such that $\sigma(A)$ is disjoint from the strip
$\{z\in\C: \alpha-\epsilon\leq \Re z\leq \alpha+\epsilon\}$.  It follows that $\sigma(A)$ decomposes in two closed parts,
$\Sigma_- = \{z\in \sigma(A): \Re z <\alpha-\epsilon\}$, $\Sigma_+=\{z\in\sigma(A) : \Re z>\alpha+\epsilon\}$, and that there is a
corresponding splitting $E= E_-\oplus E_+$ into invariant subspaces $E_\pm$ for $A$, with $\sigma(A|E_\pm) = \Sigma_\pm$.  The
splitting is also invariant under $e^{tA}$.  It follows from $\sigma(A_-)\subset \{z:\Re z<\alpha-\epsilon\}$ that
$\sigma(e^{tA_-})\subset \{z: |z|\leq e^{(\alpha-\epsilon)t}\}$, while $\sigma(A_+)\subset\{z:\Re z > \alpha+\epsilon\}$ implies
$\sigma(e^{tA_-})\subset\{z: |z|\geq e^{(\alpha+\epsilon)t}\}$.
\end{proof}

In particular, if $A$ is the Lichnerowicz Laplacian $\Delta_\ell$, then we will show that
$\sigma(\Delta_\ell) = (-\infty, 0]\cup\{\lambda_j:j\in\N\}$ for an eigenvalue sequence $\lambda_1\geq\lambda_2\geq\lambda_3\geq\cdots\to 0$.
It then follows that \(\sigma\bigl(e^{t\Delta_\ell}\bigr)\) is disjoint from any circle $\{z\in\C : |z|=r\}$ with $r>1$ and
$r\neq e^{t\lambda_j}$  for all $j$.

\subsection{Invariant manifolds, following Irwin}
\label{sec:Irwin Chaperon}

There is a long history of invariant manifold theorems which establish their existence and smoothness under various hypotheses.
Inspired by Joel Robbin's \cite{MR227583} existence proof for \textsc{ode} using the Implicit Function Theorem on Banach spaces,
Irwin~\cite{IrwinProofPseudoStableMfdThm} presented a construction of stable and unstable manifolds for maps of finite-dimensional
spaces based on the Implicit Function Theorem.  Building on Irwin's work, Rafael de~la~Llave and Gene Wayne \cite{MR1337223}, and
later Marc~Chaperon~\cite{Chaperon2002,Chaperon2004}, proved an unstable manifold theorem in which the Banach space may be
infinite-dimensional, and in which the map $f$ need not be a local diffeomorphism.  This last feature is especially important for
our setting, because it allows us to apply the result to the time-$t$ map $\phi^t$ defined by the Ricci--DeTurck flow.

Consider a Banach space $E$, an open subset $\mc U\subset E$, and a Fréchet differentiable map $f:\mc U\to E$ with a fixed point
$p\in\mc U$.  The Fréchet derivative $\mr df_p$ is a bounded linear map on $E$.  We assume that for some $a>0$, the spectrum of
$\mr df_p$ is disjoint from the circle $\{\lambda\in\C : |\lambda|=a\}$, so that the spectrum $\sigma(\mr df_p)$ is the union of two
compact sets
\begin{equation} \label{eq:spectrum decomposition} 
\begin{aligned}
&\sigma(\mr df_p) = \Sigma_a^s \cup \Sigma_a^u, \\
&\Sigma_a^s = \{\lambda\in\sigma(\mr
df_p) : |\lambda| < a\},\quad \Sigma_a^u = \{\lambda\in\sigma(\mr df_p) : |\lambda| > a\}.
\end{aligned}
\end{equation}
Then the Banach space $E$ is the direct sum of two $\mr df_p$-invariant subspaces $E^u$ and $E^s$ such that $\mr df_p|_{E^s}$
expands vectors by at most $a$, and $\mr df_p|_{E^u}$ expands vectors by at least $a$.  More precisely, it follows
from~\eqref{eq:spectrum decomposition} that there is a constant $C$ such that
\begin{equation}\label{eq:stable and unstable expansion rates}
\forall v\in E^s: \|(\mr df_p)^k\cdot v\|\leq Ca^k\|v\|
\quad\mbox{ and }\quad
\forall v\in E^u:
\|(\mr df_p)^k\cdot v\| \geq \frac{a^k}{C}\|v\|.
\end{equation}
If $a>1$, then we define the \emph{local unstable set} $W^u_p(a, \mc U)$ to consist of all points $q\in\mc U$ for which there exists
an ancient orbit\footnote{~In the context of maps, an \emph{ancient orbit} is a sequence~$x_i$ that is defined for all nonpositive
  integers and that satisfies $x_i=f(x_{i-1})$ for all $i\leq 0$. } $\{x_i\mid i\leq 0\}$ of the map $f$ that is contained in
$\mc U$ and that converges geometrically to $p$ as $i\to-\infty$, with $d(x_i, p)\leq Ca^k$ for some $C>0$.

The local unstable set $W^u_p(a, \mc U)$ is invariant under $f$ near $p$ in the sense that, if we set $\mc U_0 = f^{-1}(\mc U)$,
then the definition of $W^u_p$ implies directly that $f\bigl(\mc U_0 \cap W^u_p(a, \mc U)\bigr) \subset W^u_p(a, \mc U)$.

\begin{theorem}		\label{UnstableManifoldTheorem}
Let $f:\mc U\to E$ be as above.  If $a>1$ and if the neighborhood $\mc U$ is sufficiently small, then the local unstable set $W^u_p(a, \mc U)$
is a smooth submanifold of $\mc U$ that contains $p$, and for which $T_pW^u_p(a, \mc U) = E^u$. 

The local unstable manifold $W^u_p(a, \mc U)$ has the same differentiability as the map $f$,
\emph{i.e.,} if $f$ is $C^r$ for $1\leq r\leq \infty$ or $r=\omega$, then $W^u_p(a,\mc U)$ also is $C^r$.
\end{theorem}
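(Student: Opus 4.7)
The plan is to follow the Irwin--Chaperon approach: recast the existence of ancient orbits converging to $p$ at rate $a^i$ as a zero-finding problem in a weighted Banach space of sequences, then apply the implicit function theorem. After translating so that $p = 0$, I introduce the weighted sequence space
$$X_a := \Bigl\{\, x = (x_i)_{i \leq 0},\ x_i \in E \ \Big|\ \|x\|_{X_a} := \sup_{i \leq 0} a^{-i} \|x_i\| < \infty \,\Bigr\},$$
whose elements decay geometrically to $0$ as $i \to -\infty$ at exactly the rate demanded by the definition of $W_p^u(a, \mc U)$. On a small $X_a$-neighborhood $\mc V$ of $0$, the orbit operator
$$\Phi(x)_i := x_i - f(x_{i-1}), \qquad i \leq 0,$$
is well-defined and inherits the regularity class of $f$. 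By construction, the zero set of $\Phi$ in $\mc V$ is precisely the set of ancient orbits of $f$ with the required decay, and the evaluation $x \mapsto x_0$ maps it onto $W_p^u(a, \mc U)$.

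The main analytic step is to show that the Fréchet derivative $D\Phi(0) h = (h_i - T h_{i-1})_{i \leq 0}$, with $T := df_p$, has kernel canonically isomorphic to $E^u$ and admits a bounded right inverse. Using the invariant splitting $E = E^u \oplus E^s$ from~\eqref{eq:spectrum decomposition}, the equation $h_i - T h_{i-1} = g_i$ decouples into unstable and stable components. On $E^u$, $T$ is invertible with $(T|_{E^u})^{-1}$ of spectral radius strictly less than $1/a$ by~\eqref{eq:stable and unstable expansion rates}, so I iterate the recursion \emph{backward} from a free initial value $h_0^u \in E^u$ to obtain a bounded solution in $X_a$. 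On $E^s$, $T|_{E^s}$ may fail to be invertible, but has spectral radius strictly less than $a$; in place of backward iteration I use the absolutely convergent Duhamel sum
$$h_i^s = \sum_{k = 0}^\infty (T|_{E^s})^k\, g_{i-k}^s.$$
A short spectral argument applied to $h_i = T^n h_{i-n}$, using the spectral gap on $E^s$, forces any kernel element to satisfy $h^s \equiv 0$, so $\ker D\Phi(0) \cong E^u$ via $h \mapsto h_0^u$, and the right inverse just exhibited splits this kernel off. The implicit function theorem (or its analytic version when $f$ is analytic) then parametrizes the zero set of $\Phi$ near $0$ by a $C^r$ map $\psi : \mc W \subset E^u \to X_a$ whose differential at $0$ is the canonical embedding of $E^u$ into $\ker D\Phi(0)$. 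Composing with evaluation at $i = 0$ gives $\bar\psi : \mc W \to E$ with $d\bar\psi(0)$ equal to the inclusion $E^u \hookrightarrow E$; hence, after shrinking $\mc W$, $\bar\psi$ is a $C^r$ embedding, its image is $W_p^u(a, \mc U)$, and $T_p W_p^u(a, \mc U) = E^u$. The regularity class of $\bar\psi$ matches that of $f$ because $\Phi$ is assembled from $f$ by composition, and composition preserves $C^r$ regularity on $X_a$ for each $r \in \{1, 2, \ldots, \infty, \omega\}$.

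The hard part is precisely that $f$ need not be a local diffeomorphism, so the classical graph-transform construction---which requires inverting $T|_{E^s}$---does not apply. The forward Duhamel formula above is what circumvents this obstruction, and is the key Chaperon--de~la~Llave--Wayne innovation that makes the theorem applicable to time-$t$ maps of semiflows, such as the Ricci--DeTurck flow of interest here, where backward evolution is genuinely ill-posed.
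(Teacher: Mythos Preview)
Your proposal is correct and follows essentially the same Irwin--Chaperon approach as the paper: the same weighted sequence space $X_a = \ell^\infty_a(E)$, the same splitting into stable and unstable components, the same forward Duhamel sum on $E^s$ and backward iteration on $E^u$, and the same appeal to the Implicit Function Theorem. The only difference is cosmetic: the paper first rewrites the orbit equations in ``solved'' form (so the nonlinear map $\Upsilon$ has $\mr d\Upsilon_0 = 0$ and IFT applies with linearization equal to the identity), whereas you keep the full orbit operator $\Phi$ and analyze $D\Phi(0)$ directly, showing its kernel is $E^u$ and exhibiting a bounded right inverse---these are equivalent packagings of the same argument.
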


A proof of this theorem can be found in the papers~\cite{MR1337223,Chaperon2002,Chaperon2004}.  A noteworthy feature of these
references is the extent to which their methods generalize to new situations.  For example, we found it useful to employ very
similar arguments in~\cite{GAFA}.

Since these citations are written from a highly abstract perspective, Appendix~\ref{sec:invariant manifold construction} provides
a short outline of the proof of Theorem~\ref{UnstableManifoldTheorem} adapted to the situation studied in this paper.

\section{Generation of an analytic semigroup} \label{sec:L generates}

This section is dedicated to the proof of Theorem~\ref{lem:an-sgp}.

\subsection{$\ope L$ generates an analytic semigroup}
We must find $\omega>0$ such that for all $f_{jk}\in h^{0,\alpha}(\sym\mfd )$ and all $\lambda$ with $\Re \lambda>\omega$, there is a
unique $h_{jk}\in h^{2,\alpha}(\sym\mfd )$ such that $\lambda h_{jk}-\ope L h_{jk} = f_{jk}$.

We then have to show that $\|h_{jk}\|_{2,\alpha}\leq C\|f_{jk}\|_{0,\alpha}$ for some $C$ that does not depend on $f_{jk}$.  We do this
by approximating the inverse of $\lambda-\ope L$ locally by the inverse of the constant coefficient operator one finds by ``freezing the
coefficients'' of $\ope L$, and then combining the local inverses using a partition of unity.  An abstract version of this common approach
is presented in \cite{Angenent1999}.

\subsection{Eliminating the lower order terms} The operator
\[
\ope K h_{jk} \stackrel{\rm def}=B^{pqr}_{jk}\nabla_r h_{pq} + C^{pq}_{jk}h_{pq}
\]
is bounded from \(h^{1,\alpha}(\sym\mfd)\) to \(h^{0,\alpha}(\sym\mfd)\).  Since \(h^{1,\alpha}\) is an interpolation space between
\(h^{2,\alpha}\) and \(h^{0,\alpha}\) this implies that \(\ope L:h^{2,\alpha}\to h^{0,\alpha}\) generates an analytic semigroup if and
only if \(\ope L-\ope K\) does.  We may therefore assume that the lower order coefficients \(B^{pqr}_{jk}\) and \(C^{pq}_{jk}\) vanish.

\subsection{Local models for \(\ope L\)} 
We continue to assume that \(\mfd \) has bounded geometry. This lets us assume that there exists \(r_0>0\) such that for every point
\(\mf p\in\mfd \), the exponential map \(\exp_p : \ball_{r_0}(0) \hookrightarrow \mfd \) is a diffeomorphism from
\(\ball_{r_0}(0)\subset T_p\mfd \) onto its image \(\ball_{r_0}(\mf p)\subset\mfd \).  Choosing a basis for \(T_{\mf p}\mfd \), we then get
coordinates \(x^1, \dots, x^n\) on \(\ball_{r_0}(\mf p)\).

Using the pushforward $(\exp_{\mf p})_*$ and pullback $(\exp_{\mf p})^*$ maps associated with
$\exp_{\mf p}: \ball_r(0)\subset\R^n \to \ball_r(\mf p)\subset\mfd $, we can represent the operator $\ope L$ locally by a differential
operator,
\[
\ope L_{\mf p} = (\exp_{\mf p})^* \circ \ope L \circ (\exp_{\mf p})_*
\]
which acts on symmetric 2-tensors in $\ball_r(0)\subset \R^n$ via
\begin{align*}
(\ope L_{\mf p} h)_{jk} &= A^{pq}\D_p \D_q h_{jk}  \\
&= A^{pq}(x)\frac{\partial^2h_{jk}}{\partial x^p\partial x^q}(x) 
+ \tilde B^{pqr}_{jk}(x) \frac{\partial h_{pq}}{\partial x^r}(x) + \tilde C^{pq}_{jk}(x)h_{pq}(x).
\end{align*}
From here on, whenrever we consider tensors on $\ball_{r_0}(\mf p)$, we will represent them in geodesic coordinates and not explicitly
mention the pullback and pushforward operations, $(\exp_{\mf p})^*$ and $(\exp_{\mf p})_*$ respectively, in our computations.

The $\tilde B$ and $\tilde C$ coefficients contain Christoffel symbols and their first derivatives that come from expanding the covariant
derivatives $\D_{(\cdots)}$. Ignoring indices and integer coefficients, we have
\[
\tilde B = \Gamma, \qquad \tilde C = \partial \Gamma + \Gamma + \Gamma*\Gamma.
\]
It follows that $A^{pq}, \tilde B^{pqr}_{jk}, \tilde C^{pq}_{jk}$ are uniformly bounded in $C^1(\ball_r(0))$, and that $A^{pq}(x)$ is
uniformly elliptic, \emph{i.e.,} there exists $\sigma>0$ such that for all $a\in\N$, $x\in\ball_r(0)$, and $\xi\in\R^n$, one has
\begin{equation}
\label{eq:uniformly elliptic}
\sigma |\xi|^2 \leq A^{pq}(x)\xi_p\xi_q \leq \frac{1}{\sigma}|\xi|^2.
\end{equation}

\subsection{The local resolvent}
Let $r_1\in(0,r_0)$ be given, and consider the extended coefficients $\hat A^{pq}, \hat B^{pqr}_{jk}, \hat C^{pq}_{jk}:\R^n\to\R$ obtained
by setting
\[
\hat A^{pq}(x) = A^{pq}(x), \qquad \hat B^{pqr}_{jk}(x) = \tilde B^{pqr}_{jk}(x),\qquad \hat C^{pq}_{jk}(x) = \tilde C^{pq}_{jk}(x) ,
\]
for $|x|\leq r_1$, and by extending $\hat A^{pq}(x)$, $\hat B^{pqr}_{jk}(x)$, and $\hat C^{pq}_{jk}(x)$ so they are constant along rays
emanating from the origin, \emph{i.e.,} for $|x|\geq r_1$, we have
\begin{align*}
\hat A^{pq}(x) &= A^{pq}\Bigl(\frac{r_1x}{|x|}\Bigr), &
                              \hat B^{pqr}_{jk}(x) &=  \tilde B^{pqr}_{jk}\Bigl(\frac{r_1x}{|x|}\Bigr), &
                                                                                                          \hat C^{pq}_{jk}(x) &=  \tilde C^{pq}_{jk}\Bigl(\frac{r_1x}{|x|}\Bigr).
\end{align*}
We then define an elliptic operator on symmetric $2$-tensor fields on $\R^n$ by
\begin{align*}
(\hat {\ope L}_{\mf p} h)_{jk} 
= \hat A^{pq}(x)\frac{\partial^2h_{jk}}{\partial x^p\partial x^q}(x) 
+ \hat B^{pqr}_{jk}(x) \frac{\partial h_{pq}}{\partial x^r}(x) + \hat C^{pq}_{jk}(x)h_{pq}(x).
\end{align*}

\begin{lemma} \label{lem:local resolvent} There exist $r_1\in(0, r_0)$, $\omega>0$, and $C>0$ such that
\[
\lambda-\hat{\ope L} _{\mf p} : h^{2, \alpha}(\sym\R^n)\to h^{0,\alpha}(\sym\R^n)
\]
is invertible for all $\lambda\in\C$ with $\Re\lambda > \omega$, and such that
\[
\|(\lambda-\hat{\ope L} _{\mf p})^{-1}f_{jk}\|_{2,\alpha} \leq C \|f_{jk}\|_{0,\alpha}
\]
for all $f_{jk}\in h^{0,\alpha}(\sym\R^n)$.

The constants $C, \omega, r_1$ do not depend on the chosen point $\mf p\in\mfd $.
\end{lemma}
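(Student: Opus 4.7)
The plan is to treat $\hat{\ope L}_{\mf p}$ as a perturbation of the constant-coefficient operator obtained by freezing $A^{pq}$ at the origin. By the reduction carried out in the preceding subsection, I may assume the lower-order terms vanish, so that the operator acts as the scalar uniformly elliptic operator $\hat A^{pq}(x)\partial_p\partial_q$ componentwise on $h_{jk}$, and the problem reduces to a scalar resolvent estimate on $h^{0,\alpha}(\R^n)$.

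The first step is to invoke the classical sectorial resolvent estimate for the constant-coefficient operator $\ope L_0 := A^{pq}(0)\partial_p\partial_q$: there exist $\omega_0 \geq 0$ and $C_0 > 0$, depending only on the uniform ellipticity constant $\sigma$ from~\eqref{eq:uniformly elliptic}, on $n$, and on $\alpha$, such that for every $\lambda \in \C$ with $\Re\lambda > \omega_0$ and every $f \in h^{0,\alpha}(\R^n)$, the function $u := (\lambda - \ope L_0)^{-1}f$ lies in $h^{2,\alpha}(\R^n)$ and satisfies
\begin{equation*}
\|D^2 u\|_{C^{0,\alpha}} + |\lambda|\, \|u\|_{C^{0,\alpha}} \leq C_0\, \|f\|_{C^{0,\alpha}}.
\end{equation*}
Because the ellipticity bound is uniform, both $\omega_0$ and $C_0$ are independent of $\mf p$.

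The key step is to show that the error operator $\ope E := \hat{\ope L}_{\mf p} - \ope L_0 = [\hat A^{pq}(x) - A^{pq}(0)]\, \partial_p \partial_q$ is small in an appropriate sense. By the radial-extension construction, every point at which $\hat A^{pq}$ is sampled lies in $\overline{\ball_{r_1}(0)}$, so the uniform bound on $\nabla A^{pq}$ afforded by bounded geometry yields
\[
\|\hat A^{pq} - A^{pq}(0)\|_{L^\infty(\R^n)} \leq \|\nabla A^{pq}\|_\infty\, r_1 \leq C_1\, r_1,
\]
with $C_1$ independent of $\mf p$. The Hölder seminorm $[\hat A^{pq} - A^{pq}(0)]_\alpha$ is only uniformly bounded (not small); call this bound $M$. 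Combining the product rule $\|fg\|_{C^{0,\alpha}} \leq 2\|f\|_\infty \|g\|_{C^{0,\alpha}} + [f]_\alpha \|g\|_\infty$ with the interpolation $\|D^2 u\|_\infty \leq C\, \|u\|_{C^0}^{\alpha/(2+\alpha)}\, \|u\|_{C^{2,\alpha}}^{2/(2+\alpha)}$ and the resolvent estimate above gives
\[
\|\ope E\, u\|_{C^{0,\alpha}} \leq \bigl(2 C_0 C_1\, r_1 + C_2\, M\, |\lambda|^{-\alpha/(2+\alpha)}\bigr)\, \|f\|_{C^{0,\alpha}}.
\]

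Finally, I would fix $r_1$ small and $\omega \geq \omega_0$ large enough that the bracketed factor is at most $1/2$ whenever $\Re\lambda > \omega$; both choices depend only on the uniform data of $(\mfd, g)$, hence are independent of $\mf p$. The identity $\lambda - \hat{\ope L}_{\mf p} = (\lambda - \ope L_0)\bigl(I - (\lambda - \ope L_0)^{-1}\ope E\bigr)$ is then inverted by a convergent Neumann series, producing $(\lambda - \hat{\ope L}_{\mf p})^{-1}$ with the required bound. Little-Hölder regularity propagates through the series because $\hat A^{pq} \in C^1 \subset h^{0,\alpha}$, so multiplication by $\hat A^{pq} - A^{pq}(0)$ maps $h^{2,\alpha}$ into $h^{0,\alpha}$, while the constant-coefficient resolvent maps $h^{0,\alpha}$ into $h^{2,\alpha}$. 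The principal obstacle is the interpolation step: one must manufacture a genuine negative power of $|\lambda|$ to overcome the fact that the Hölder seminorm of the coefficient perturbation is only $O(1)$ rather than $O(r_1)$, and this is precisely why a positive $\omega$ is required rather than $\omega = 0$.
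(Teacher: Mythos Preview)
Your approach is correct and follows the same overall strategy as the paper: freeze the principal coefficients at the origin and invert by a Neumann series. The difference is that you work harder than necessary in one place. You assert that the H\"older seminorm $[\hat A^{pq} - A^{pq}(0)]_\alpha$ is only $O(1)$; in fact it is $O(r_1^{1-\alpha})$. Because the extension is constant along rays outside $\ball_{r_1}(0)$, the function $\hat A^{pq}$ is globally Lipschitz with the same constant $M$ that bounds $\nabla A^{pq}$ on the small ball. Interpolating the $C^0$ bound $\|\hat A^{pq} - A^{pq}(0)\|_\infty \leq Mr_1$ against this Lipschitz bound gives $\|\hat A^{pq} - A^{pq}(0)\|_{C^{0,\alpha}} \leq CMr_1^{1-\alpha}$, which the paper records as~\eqref{eq:coeffs Ca close}. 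Consequently $\|(\hat{\ope L}_{\mf p}-\ope M)f\|_{h^{0,\alpha}} \leq Cr_1^{1-\alpha}\|f\|_{h^{2,\alpha}}$, and the Neumann series converges once $r_1$ alone is small; no negative power of $|\lambda|$ is needed, and $\omega$ enters only through the constant-coefficient resolvent. Your detour through the interpolation $\|D^2u\|_\infty \leq C\|u\|_{C^0}^{\alpha/(2+\alpha)}\|u\|_{C^{2,\alpha}}^{2/(2+\alpha)}$ still yields a valid proof, but it is avoidable and leads you to the mistaken diagnosis that a large $\omega$ is forced by the perturbation step.

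One minor point: the paper keeps the Christoffel-symbol contributions $\tilde B,\tilde C$ inside its frozen operator $\ope M$, whereas you reduce all the way to $A^{pq}(0)\partial_p\partial_q$. Your further reduction is legitimate by the same relative-boundedness argument used earlier, but you should justify it explicitly rather than cite ``the preceding subsection,'' which eliminated only the \emph{global} lower-order coefficients $B,C$ of $\ope L$, not the $\tilde B,\tilde C$ that reappear when covariant derivatives are expressed in geodesic coordinates.
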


As usual, the identity
$\lambda (\lambda-\hat{\ope L} _{\mf p})^{-1} = \boldsymbol{1} + \hat{\ope L} _{\mf p}\circ (\lambda-\hat{\ope L} _{\mf p})^{-1}$ then
implies that
\begin{equation}
\label{eq:resolvent decay}
\|(\lambda-\hat{\ope L} _{\mf p})^{-1}f_{jk}\|_{0, \alpha} \leq \frac{C}{|\lambda|}\|f_{jk}\|_{0,\alpha},\qquad
\forall f_{jk}\in h^{0,\alpha},\; \Re \lambda \geq \omega. 
\end{equation}
By interpolation between the $h^{0,\alpha}$ and $h^{2,\alpha}$ norms, one also obtains the estimate
\begin{equation}\label{eq:resolvent interpolated}
\|(\lambda-\hat{\ope L} _{\mf p})^{-1}f_{jk}\|_{1, \alpha} \leq \frac{C}{\sqrt{|\lambda|}}\|f_{jk}\|_{0,\alpha},\qquad
\forall f_{jk}\in h^{0,\alpha},\;\Re \lambda \geq \omega. 
\end{equation}
The constant $C$ is the same for all $\mf p\in\mfd $.

\begin{proof}
The gradients of $A^{pq}, \tilde B^{pqr}_{jk}, \tilde C^{pq}_{jk}$ are uniformly bounded by assumption.  It follows that the extensions
$\hat A^{pq}, \hat B^{pqr}_{jk}, \hat C^{pq}_{jk}$ also have uniformly bounded gradients on all of $\R^n$.  Let $M$ be an upper bound for
those gradients.  Then we have
\begin{equation}
\label{eq:coeffs C0 close}
|\hat A^{pq}(x)- A^{pq}(0)| +
|\hat B^{pqr}_{jk}(x)- \tilde B^{pqr}_{jk}(0)| 
+   |\hat C^{pq}_{jk}(x)- \tilde C^{pq}_{jk}(0)|
\leq Mr_1
\end{equation}
for all $x$ with $|x|\leq r_1$.  Since the coefficients are constant along rays outside of the ball $\ball_{r_1}(0)$, it follows that
\eqref{eq:coeffs C0 close} also holds for all other $x\in\R^n$.

Because the gradients of $\hat A^{pq}, \hat B^{pqr}_{jk}, \hat C^{pq}_{jk}$ are all bounded by $M$, we then find by interpolation that
\begin{multline}
\label{eq:coeffs Ca close}
\|\hat A^{pq}(\cdot)- A^{pq}(0)\|_{0, \alpha} +
\|\hat B^{pqr}_{jk}(\cdot)- \tilde B^{pqr}_{jk}(0)\|_{0, \alpha}  \\
+ \|\hat C^{pq}_{jk}(\cdot)- \tilde C^{pq}_{jk}(0)\|_{0, \alpha} \leq Cr_1^{1-\alpha}.
\end{multline}
Hence $\ope L_{\mf p}$ is close to the constant coefficient operator
\[
\ope M h_{jk} \stackrel{\rm def}= \hat A^{pq}(0)\frac{\partial^2h_{jk}}{\partial x^p\partial x^q}(x) + \hat B^{pqr}_{jk}(0) \frac{\partial
h_{pq}}{\partial x^r}(x) + \hat C^{pq}_{jk}(0)h_{pq}(x)
\]
in the sense that for all $f_{jk}\in h^{2, \alpha}(\sym\R^n)$, one has
\[
\|\hat{\ope L} _{\mf p}f_{jk} - \ope M f_{jk}\|_{h^{0,\alpha}} \leq Cr_1^{1-\alpha} \|f_{jk}\|_{h^{2, \alpha}(\sym\R^n)}.
\]
The coefficients of $\ope M$ are uniformly bounded, and uniformly elliptic, so the resolvent $(\lambda-\ope M)^{-1}$ is defined for all
$\lambda\in\C$ with $\Re\lambda>\omega$ for some $\omega>0$, and the $h^{0,\alpha}\to h^{2,\alpha}$ operator norms of
$(\lambda-\ope M)^{-1}$ are uniformly bounded by some $C>0$.

Therefore, if $r_1>0$ is small enough, then the geometric series
\begin{align*}
(\lambda-\hat{\ope L} _{\mf p})^{-1}
&=  (\lambda-\ope M + \ope M -\hat{\ope L} _{\mf p})^{-1}  \\
&=  (\lambda-\ope M)^{-1}(\boldsymbol{1} + (\ope M -\hat{\ope L} _{\mf p})(\lambda-\ope M)^{-1})^{-1}  \\
&=   (\lambda-\ope M)^{-1} + \sum_{k\geq 1}\Bigl(
-(\ope M -\hat{\ope L} _{\mf p}) (\lambda-\ope M)^{-1}
\Bigr)^k
\end{align*}
converges, so that $(\lambda-\hat{\ope L} _{\mf p})^{-1}$ exists for all $\lambda\in\C$ with $\Re\lambda>\omega$ and is bounded by $2C$.
\end{proof}

\subsection{The resolvent of $\ope L$}
Since $\mfd $ has bounded geometry, there exists $N\in \N$ such that for any $r\in(0, r_1)$ there exists a sequence of points
$\{\mf p_a\in\mfd \mid a\in\N\}$ for which $\{\ball_a = \ball_r(\mf p_a) \mid a\in\N\}$ is a covering of $\mfd $ with the following
property:
\begin{equation}
\label{eq-locally finite cover}
\parbox[c]{260pt}{\raggedright\itshape
No more than $N$ distinct balls from the covering $\{\ball_a\}$ have nonempty intersection, \emph{i.e.,} one has $\cap_{k=0}^N \ball_{a_k} = \varnothing$
for any choice of indices $a_0<a_1<\cdots<a_N\in\N$.}
\end{equation}  

For a proof, which only needs completeness and a lower bound for $\Rc$, see Lemma~1.6 of~\cite{Hebey}.  \medskip

For each $a\in\N$ let $\hat{\ope L} _{\mf p_a}$ be the localized operator at the point $\mf p_a$, and let
\[
\resolvent_a(\lambda) = (\lambda -\hat{\ope L} _{\mf p_a})^{-1} : h^{0,\alpha}(\sym\R^n) \to h^{{2,\alpha}}(\sym\R^n)
\]
be its resolvent, as constructed in Lemma~\ref{lem:local resolvent}.

To solve $\lambda f_{jk} - \ope L f_{jk} = h_{jk}$ on $\mfd$, we localize $h_{jk}$ to each geodesic ball $\ball_a$, apply the local
resolvent $\resolvent_a(\lambda)$, and then glue the results together, summing over $a\in\N$ to get an approximation to
$(\lambda-\ope L)^{-1}h_{jk}$.  In the localization and subsequent gluing, we will use a partition of unity $\varphi_a\in C^\infty(\mfd )$
in the sense of \S~\ref{sec:partition of unity}, \emph{i.e.,} $\mathop{\mathrm{supp}}\varphi_a\subset\ball_r(p_a)$, and
$\sum_a \varphi_a(x)^2=1$.

\subsubsection{The approximate resolvent} Consider the operator
\begin{equation}
\label{eq:approx-resolvent}
\ope S(\lambda)h_{jk} \stackrel{\rm def}{=}
\sum_{a\in\N} \varphi_a(x) \resolvent_a(\lambda)\bigl(\varphi_a \cdot h_{jk}\bigr).
\end{equation}
The terms in this sum should be interpreted as follows.  The tensor field $h_{jk}$ is defined on all of $\mfd $.  For any given
$a\in\N$, we multiply it with $\varphi_a$, which leads to a tensor field $\varphi_a h_{jk}$ whose support lies in $\ball_a$ and which in
geodesic coordinates defines a compactly supported tensor field on $\ball_{r}(0)\subset\R^n$.  We may therefore think of $\varphi_a h_{jk}$
as a tensor field on $\R^n$ that is supported in $\ball_r(0)$, and to which one can apply the local resolvent $\resolvent_a(\lambda)$,
resulting in $\resolvent_a(\varphi_a h_{jk})$.  The tensor field $\resolvent_a(\varphi_a h_{jk})$ is defined on all of $\R^n$.  To get a
tensor field on $\mfd $ again, we restrict $\resolvent_a(\lambda)(\varphi_a h_{jk})$ to $\ball_r(0)\subset\R^n$.  Identifying $\ball_r(0)$
with $\ball_a = \ball_r(p_a)\subset\mfd $ via the exponential map, we get a tensor field defined on $\ball_a$.  Multiplying it with
$\varphi_a$ we get a compactly supported tensor field in $\ball_a$, which we can extend to the whole manifold by letting it vanish on
$\mfd \setminus\ball_a$.  In other words, in \eqref{eq:approx-resolvent},
$\varphi_a(x) \resolvent_a(\lambda)\bigl(\varphi_a \cdot h_{jk}\bigr)$ is an abbreviation for
\[
\varphi_a(x) (\exp_{p_a})_*\biggl\{ \resolvent_a(\lambda)\Bigl[(\exp_{p_a})^*\bigl(\varphi_a \cdot h_{jk}\bigr)\Bigr]\biggr\}.
\]

\subsubsection{Boundedness of $\ope S(\lambda)$} \label{sec:S lambda bounded}

We verify that $\ope S(\lambda):h^{0,\alpha}(\sym\mfd )\to h^{2,\alpha}(\sym\mfd )$ is uniformly bounded, \emph{i.e.,} there exists $C>0$
such that for all $h_{jk}\in h^{0,\alpha}(\sym\mfd )$ and $\lambda\in\C$ with $\Re\lambda > \omega$, one has
\[
\|\ope S(\lambda)h_{jk}\|_{h^{2, \alpha}(\sym\mfd )} \leq C \|h_{jk}\|_{h^{0, \alpha}(\sym\mfd )} .
\]
\begin{proof}
The $h^{2, \alpha}$ norm is equivalent to
\[ [h_{jk}]_{2,\alpha} \stackrel{\rm def}{=} \sup_{a\in\N} \bigl\|h_{jk}\big|_{\ball_a}\bigr\|_{h^{2, \alpha}}.
\]
At this point, we recall from~\eqref{eq-locally finite cover} that the intersection of more than $N$ balls in the covering
$\{\ball_a\}_{a\in\N}$ is empty.  This implies that in the sum that defines $\ope S(\lambda)h_{jk}(x)$, the only nonzero terms are those for
which $x\in\ball_a$.  For any $x\in\mfd $, there are at most $N$ such terms. We therefore get
\begin{align*}
\|\ope S(\lambda)h_{jk}\|_{2, \alpha}
&\leq C \sup_{a\in\N} \|(\ope S(\lambda)h_{jk})\big|_{\ball_a}\|_{2, \alpha}\\
&\leq C \sup_{a\in\N}\sum_{b\in\N} 
\bigl\|\bigl(\varphi_b\resolvent_b(\lambda)(\varphi_b h_{jk})\bigr)\Big|_{\ball_a}\bigr\|_{2,\alpha} \\
&\leq C \sup_{a\in\N}\sum_{\ball_a\cap\ball_b\neq\varnothing} 
\bigl\|\bigl(\varphi_b\resolvent_b(\lambda)(\varphi_b h_{jk})\bigr)\Big|_{\ball_a}\bigr\|_{2,\alpha} \\
&\leq C \sup_{a\in\N}\sum_{\ball_a\cap\ball_b\neq\varnothing} 
\bigl\|\varphi_b\resolvent_b(\lambda)(\varphi_b h_{jk})\bigr\|_{2,\alpha} \\
&\leq NC \sup_{b\in\N} 
\bigl\|\varphi_b\resolvent_b(\lambda)(\varphi_b h_{jk})\bigr\|_{2,\alpha} \\
&\leq NC \sup_{b\in\N} 
\bigl\|\resolvent_b(\lambda)(\varphi_b h_{jk})\bigr\|_{2,\alpha} \\
&\leq NC \sup_{b\in\N} 
\bigl\|\varphi_b h_{jk}\bigr\|_{0,\alpha} \\
&\leq NC \|h_{jk}\|_{0,\alpha}.
\end{align*}
\end{proof}

\subsubsection{The error in the approximate resolvent}
We proceed to show that for any $f_{jk}\in h^{0,\alpha}(\sym\mfd )$, one has
\begin{equation}
\label{eq:Error in approx resolvent}
(\lambda-\ope L)\ope S(\lambda)f_{jk} 
= f_{jk} -\ope E(f_{jk})
= \bigl(\boldsymbol{1}_{h^{0,\alpha}(\mfd )} -\ope E\bigr)(f_{jk}),
\end{equation}
where $\ope E$ is the operator
\[
\ope E(f_{jk}) = \sum_a [\ope L,\varphi_a] \resolvent_a(\lambda) (\varphi_a \cdot f_{jk}).
\]
\begin{proof} Let $f_{jk}\in h^{0,\alpha}(\sym\mfd )$ be given.  Then
\begin{align*}
(\lambda-\ope L)\ope S(\lambda)f_{jk}
&=(\lambda-\ope L)\sum_a \varphi_a \resolvent_a(\lambda) (\varphi_a \cdot f_{jk}) \\
&=\sum_a (\lambda-\ope L)\varphi_a \resolvent_a(\lambda) (\varphi_a \cdot f_{jk}) \\
&=\sum_a \varphi_a (\lambda-\ope L)\resolvent_a(\lambda) (\varphi_a \cdot f_{jk}) 
- [\ope L,\varphi_a] \resolvent_a(\lambda) (\varphi_a \cdot f_{jk}).
\end{align*}
On the support of $\varphi_a$, the operator $\ope L$ coincides with our extended operator $\hat{\ope L} _{\mf p_a}$, so that
$\varphi_a(\lambda-\ope L)\resolvent_a(\lambda)=\varphi_a$.  Hence,
\begin{align*}
(\lambda-\ope L)\ope S(\lambda)f_{jk}
&=\sum_a \varphi_a \cdot(\varphi_a \cdot f_{jk}) 
- [\ope L,\varphi_a] \resolvent_a(\lambda) (\varphi_a \cdot f_{jk}) \\
&=f_{jk}
- \sum_{a} [\ope L,\varphi_a] \resolvent_a(\lambda) (\varphi_a \cdot f_{jk}).
\end{align*}
\end{proof}

\subsubsection{Estimate for the error term}
For all $\lambda\in\C$ with $\Re\lambda>\omega$ and all tensor fields $h_{jk}\in h^{0,\alpha}(\sym\mfd )$, we will show that
\begin{equation} \label{eq:Error bounded} \|\ope E h_{jk}\|_{0,\alpha} \leq \frac{C}{\sqrt{|\lambda|}} \|h_{jk}\|_{0,\alpha}.
\end{equation}

\begin{proof}
The key observation here is that the commutator $[\ope L, \varphi_a]$ is a first-order differential operator: in geodesic coordinates, we
have
\begin{align*} [\ope L, \varphi_a]f_{jk} &=
               A^{pq}_{(a)}(x) [\partial_p\partial_q, \varphi_a]f_{jk} + \tilde B^{pqr}_{jk(a)} [\partial_r, \varphi_a]f_{pq}\\
             &=A^{pq}_{(a)}\frac{\partial^2\varphi_a}{\partial x^p\partial x^q}f_{jk} + \tilde B^{pqr}_{jk(a)}
               \frac{\partial\varphi_a}{\partial x^r} f_{pq} + 2A^{pq}_{(a)} \frac{\partial\varphi_a}{\partial
               x^p}\frac{\partial f_{jk}}{\partial x^q}.
\end{align*}
This implies that
\[
\|[\ope L, \varphi_a]f_{jk}\|_{0,\alpha} \leq C \|f_{jk}\|_{1, \alpha}
\]
and that
\[
\|[\ope L,\varphi_a]\resolvent_a(\lambda)\varphi_a h_{jk}\|_{0,\alpha} \leq C \|\resolvent_a(\lambda)\varphi_a h_{jk}\|_{1,\alpha}.
\]
By the interpolation estimate~\eqref{eq:resolvent interpolated}, one then has
\[
\|[\ope L,\varphi_a]\resolvent_a(\lambda)\varphi_a h_{jk}\|_{0,\alpha} \leq \frac{C}{\sqrt{|\lambda|}} \|\varphi_a h_{jk}\|_{0,\alpha} \leq
\frac{C}{\sqrt{|\lambda|}} \|h_{jk}\|_{0,\alpha}.
\]
The same reasoning as in \S\ref{sec:S lambda bounded} then leads to the bound~\eqref{eq:Error bounded} for $\ope E$.
\end{proof}

\subsubsection{Construction of the resolvent}
We showed in~\eqref{eq:Error bounded} that $\|\ope E\|_{0,\alpha\to0,\alpha} \leq \frac 12$ if $|\lambda|$ is large enough.  It follows that
for large $|\lambda|$, the operator $\boldsymbol{1}-\ope E$ on $h^{0, \alpha}(\sym \mfd )$ has a bounded inverse given by the geometric
series
\[
\bigl(\boldsymbol{1}-\ope E\bigr)^{-1} = \boldsymbol{1} + \ope E + \ope E^2 + \ope E^3 + \cdots.
\]
Together with~\eqref{eq:Error in approx resolvent}, this implies that
\[
(\lambda-\ope L) \ope S(\lambda)(\boldsymbol{1}-\ope E)^{-1} = \boldsymbol{1},
\]
\emph{i.e.,} that $\ope S(\lambda)(\boldsymbol{1}-\ope E)^{-1}$ is a right inverse for $\lambda-\ope L$.

We have obtained this right inverse for all $\lambda$ with sufficiently large $|\lambda|$, and thus there exists $\omega'>0$ such that
$\ope S(\lambda)(\boldsymbol{1}-\ope E)^{-1}$ is a right inverse whenever $\Re \lambda>\omega'$. Moreover, the norm of the right inverse is
bounded by
\begin{multline} \label{eq:resolvent decay ok} \|\ope S(\lambda)(\boldsymbol{1}-\ope E)^{-1}\|_{0,\alpha\to0,\alpha}
\leq \\
\|\ope S(\lambda)\|_{0,\alpha\to0,\alpha} \|(\boldsymbol{1}-\ope E)^{-1}\|_{0,\alpha\to0,\alpha} \leq 2 \|\ope
S(\lambda)\|_{0,\alpha\to0,\alpha} \leq \frac{C}{|\lambda|}.
\end{multline}

\subsubsection{Left invertibility} \label{sec:left inverse}
We conclude by showing that the right inverse is also a left inverse,
\emph{i.e.,} by showing that $\lambda-\ope L:h^{2,\alpha}\to h^{0,\alpha}$ is injective if $|\lambda|$ is sufficiently large.

Suppose that $f_{jk}\in h^{2,\alpha}(\sym\mfd )$ satisfies $(\lambda-\ope L)f_{jk}=0$.  Then we have
\[
\ope S(\lambda)(\lambda-\ope L)f_{jk}=0,
\]
and hence
\[
\sum_{a}\varphi_a \resolvent_a(\lambda) \varphi_a (\lambda-\ope L)f_{jk}=0.
\]
Arguing as before, we commute $\varphi_a$ and $\ope L$ to obtain
\[
\sum_{a}\varphi_a \resolvent_a(\lambda)(\lambda-\ope L) \varphi_a f_{jk} +\varphi_a \resolvent_a(\lambda) [\ope L, \varphi_a]f_{jk}=0.
\]
Using the operator identity $\varphi_a \resolvent_a(\lambda)(\lambda-\ope L) \varphi_a = \varphi_a^2$ ,we find that
\[
f_{jk}= \sum_{a} \varphi_a \resolvent_a(\lambda) [\ope L, \varphi_a]f_{jk}.
\]
Once again, we note that $[\ope L, \varphi_a]$ is a first-order differential operator with $C^1$ coefficients, so that
$[\ope L, \varphi_a] : h^{1,\alpha} \to h^{0,\alpha}$ is bounded.  Therefore, one has
\begin{align*}
\|f_{jk}\|_{1,\alpha} 
&\leq N\sup_a \|\varphi_a \resolvent_a(\lambda) [\ope L, \varphi_a]f_{jk}\|_{1,\alpha} \\
&\leq \frac{CN}{\sqrt{|\lambda|}}\sup_{a}\|[\ope L, \varphi_a]f_{jk}\|_{0,\alpha} \\
&\leq \frac{CN}{\sqrt{|\lambda|}}\|f_{jk}\|_{1,\alpha}.
\end{align*}
If $|\lambda| > CN$, this implies that $f_{jk}=0$.

We have shown that $\lambda-\ope L$ is injective and has a right inverse for all large enough $|\lambda|$, and that the inverse
$(\lambda-\ope L)^{-1}$ satisfies the required bound \eqref{eq:resolvent decay ok}.  Therefore, $\ope L$ does indeed generate an analytic
semigroup on $h^{0,\alpha}(\sym\mfd )$. This completes our proof of Theorem~\ref{lem:an-sgp}.

\section{The spectrum of $\Delta_\ell$ on asymptotically flat manifolds} \label{Spectrum of LL}

The Lichnerowicz Laplacian is given by~\eqref{Lich-Lap}, \emph{i.e.,}
\[
\Delta_\ell h_{jk} = \Delta_{\mfd }+ 2 R^q_{pjk}h_q^p - R_j^p h_{pk}-R_k^q h_{jq},
\]
where
\[
\Delta_{\mfd } h_{jk} \stackrel{\rm def}{=} g^{pq}\D_p\D_q h_{jk}
\]
is the ``rough Laplacian.''

\begin{lemma} If $|\Rm(\mf p)|\to 0$ as $\mf p\to\infty$ and $r(\mf p)\to\infty$ as $\mf p\to\infty$, which are implied by
Assumption~\ref{Asymptotically flat}, then it is true that:
\begin{enumerate}[\rm(1)]
\item $\Delta_\ell : h^{2,\alpha}\to h^{0,\alpha}$ generates an analytic semigroup,
\item $\sigma(\Delta_{\mfd }) = (-\infty, 0]$, and
\item $\sigma_{\rm ess}(\Delta_\ell) = (-\infty, 0]$.
\end{enumerate}
\end{lemma}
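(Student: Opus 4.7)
The plan for part (1) is to invoke Theorem~\ref{lem:an-sgp} directly. The Lichnerowicz Laplacian has principal coefficients $A^{pq}=g^{pq}$, which are smooth, uniformly elliptic, and of uniformly bounded gradient by bounded geometry; it has vanishing first-order part; and its zeroth-order coefficients are assembled algebraically from $\Rm$ and $\Rc$, and so lie in $h^{0,\alpha}(\sym\mfd)$ because $g$ is smooth. Theorem~\ref{lem:an-sgp} then yields the analytic semigroup.

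For part (2) I would prove the two inclusions separately, working throughout in $h^{0,\alpha}(\sym\mfd)$. For $\sigma(\Delta_\mfd)\subset(-\infty,0]$, I would apply Kato's inequality to a solution of $(\lambda-\Delta_\mfd)h=f$, extracting the weak differential inequality $\Delta_\mfd |h|\geq(\Re\lambda)|h|-|f|$, and then invoke an Omori--Yau approximate maximum principle (available under bounded geometry) to conclude that $\|h\|_\infty\leq\|f\|_\infty/\Re\lambda$ for $\Re\lambda>0$. Schauder theory from the proof of Theorem~\ref{lem:an-sgp} upgrades this to an $h^{0,\alpha}$ resolvent bound, and analyticity of the semigroup then extends the resolvent set across the imaginary axis to the complement of the negative real axis. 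For the reverse inclusion, given $\mu\leq 0$, I would write $\mu=-|\xi|^2$ with $\xi\in\R^n$ and construct a Weyl sequence by localizing Euclidean plane waves near points $\mathfrak p_k\to\infty$: in geodesic coordinates on $\ball_{r(\mathfrak p_k)}(\mathfrak p_k)$, take $h^{(k)}_{jk}(x)=\chi(x/\rho_k)\,e^{i\xi\cdot x}\,H_{jk}$, where $H$ is a fixed nonzero symmetric two-tensor, $\chi$ is a fixed bump, and $\rho_k\to\infty$ is chosen with $\rho_k\leq r(\mathfrak p_k)$. Asymptotic flatness makes the cutoff, metric-deviation, and Christoffel contributions to $(\Delta_\mfd-\mu)h^{(k)}$ each small in $h^{0,\alpha}$ relative to $\|h^{(k)}\|_{0,\alpha}$, placing $\mu$ in the approximate point spectrum.

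Part (3) is the heart of the lemma and follows from the classical invariance of the essential spectrum under relatively compact perturbations. Set $\ope K:=\Delta_\ell-\Delta_\mfd$; it is a pointwise algebraic operator with coefficients linear in $\Rm$, and by hypothesis these coefficients vanish at infinity. The crucial claim is that $\ope K\colon h^{2,\alpha}(\sym\mfd)\to h^{0,\alpha}(\sym\mfd)$ is compact. Given a bounded sequence $\{h^{(n)}\}$ in $h^{2,\alpha}$, interior estimates and Arzel\`a--Ascoli produce, on any compact subdomain $\Omega\subset\mfd$, a subsequence converging in $h^{0,\alpha}(\Omega)$; meanwhile, on $\mfd\setminus\Omega$ the decay of $\Rm$ in $C^{0,\alpha}$ (which follows from the $C^{2,\alpha}$ asymptotic flatness of $g$) forces the $h^{0,\alpha}$-norm of $\ope K h^{(n)}$ on $\mfd\setminus\Omega$ to be uniformly small. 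A diagonal extraction then delivers an $h^{0,\alpha}(\sym\mfd)$-Cauchy subsequence. Once $\ope K$ is compact, so is $\ope K(\lambda-\Delta_\mfd)^{-1}$ on $h^{0,\alpha}$ for each $\lambda$ in the resolvent set of $\Delta_\mfd$, and the Fredholm version of Weyl's theorem yields $\sigma_{\rm ess}(\Delta_\ell)=\sigma_{\rm ess}(\Delta_\mfd)$. Since part~(2) gives $\sigma(\Delta_\mfd)=(-\infty,0]$, which contains no isolated point, the essential spectrum of $\Delta_\mfd$ coincides with its full spectrum, proving (3). The principal obstacle in this plan is the little-H\"older compactness of $\ope K$: unlike the usual $L^2$ compactness, it requires tracking the decay of $\Rm$ in a H\"older seminorm rather than merely a sup norm, and this must be carefully extracted from the definition of asymptotic flatness.
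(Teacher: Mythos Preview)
Your treatment of (1) and (3) matches the paper exactly: (1) is Theorem~\ref{lem:an-sgp} applied to $A^{pq}=g^{pq}$, $B=0$, and $C$ built from $\Rm$; and (3) follows from (2) once one knows that the zeroth-order perturbation $\ope K = \Delta_\ell - \Delta_\mfd$ is compact, which the paper asserts in one line and which you correctly unpack (decay of $\Rm$ in $C^{0,\alpha}$, Arzel\`a--Ascoli on compacta, diagonal extraction). Your Weyl-sequence argument for $(-\infty,0]\subset\sigma(\Delta_\mfd)$ is also the paper's approach; the paper uses the spherical average $F(x)=\int_{\|\omega\|=k}e^{i\omega\cdot x}\,\mr d\mc S^{n-1}(\omega)$ rather than a single plane wave, but the idea is the same.

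The gap is in the forward inclusion $\sigma(\Delta_\mfd)\subset(-\infty,0]$. Your Kato--Omori--Yau argument is fine as far as it goes, but it only yields the resolvent for $\Re\lambda>0$. The sentence ``analyticity of the semigroup then extends the resolvent set across the imaginary axis to the complement of the negative real axis'' is not a valid step: analyticity of $e^{t\Delta_\mfd}$ on $h^{0,\alpha}$ only places $\sigma(\Delta_\mfd)$ in a left sector, and nothing you have written excludes a point like $-1+i$. Because $\Delta_\mfd$ is not self-adjoint on $h^{0,\alpha}$, you cannot simply invoke reality of the spectrum --- indeed the paper stresses (and footnotes the Poincar\'e disk example) that on noncompact manifolds the $h^{0,\alpha}$ spectrum of a formally symmetric operator can contain open regions of $\C$.

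The paper instead constructs the resolvent $(\lambda-\Delta_\mfd)^{-1}$ directly for every $\lambda\in\C\setminus(-\infty,0]$ by a parametrix argument that exploits asymptotic flatness: away from a compact set $K_{R,\varepsilon}$ the metric is $C^{2,\alpha}$-close to Euclidean, so one uses $(\lambda-\Delta_{\R^n})^{-1}$ in geodesic charts there; on a relatively compact neighborhood $\Omega\supset K_{R,\varepsilon}$ one uses the Dirichlet resolvent $(\lambda-\Delta_\Omega)^{-1}$ (whose $L^2$ spectrum is discrete in $(-\infty,0]$, upgraded to $h^{2,\alpha}$ by interior Schauder estimates); and one glues with a partition of unity, bounding the commutator and metric-deviation errors by $CN(\varepsilon+R^{-1})$. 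This handles all $\lambda\notin(-\infty,0]$ at once, and is what your argument is missing.
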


We already proved (1) in Theorem~\ref{lem:an-sgp}.  Statement (3) follows from (2) because $\Delta_\ell$ is a compact perturbation of
$\Delta_{\mfd }$.  We therefore only have to establish (2), which we proceed to do in the remainder of this section.  \medskip

Let $\lambda\in\C\setminus(-\infty,0]$ be given.  We show below that $\lambda-\Delta_\mfd : h^{2,\alpha}\to h^{0, \alpha}$ has a bounded
inverse.  If we were considering $\Delta_\mfd$ as an unbounded operator in $L^2(\mfd; \sym\mfd)$, then self-adjointness of $\Delta_\mfd$
would guarantee that $\lambda-\Delta_\mfd : H^2(\sym\mfd)\to L^2(\sym\mfd)$ is invertible.  However, on noncompact manifolds, the spectrum
of an elliptic operator like $\Delta_\mfd$ may depend on the particular function space in which the resolvent $(\lambda-\Delta_\mfd)^{-1}$
should act, and the spectrum of $\Delta_\mfd$ can be strictly larger than just $(-\infty,0]$.  Our goal in this section is to show that the
asymptotic flatness of $\mfd$ rules out this possibility.

\subsection{The resolvent at $\infty$} \label{ResolventAtInfinity}
Let $\lambda\in\C\setminus(-\infty,0]$.  Then the operator
$\lambda-\Delta_{\R^n}:h^{2,\alpha}(\R^n) \to h^{0,\alpha}(\R^n)$ has a bounded inverse
\[
(\lambda-\Delta_{\R^n})^{-1} : h^{0,\alpha}(\R^n) \to h^{2,\alpha}(\R^n).
\]
Our assumption that $(\mfd , g)$ is asymptotically flat lets us use the resolvent of the Laplacian on $\R^n$ to approximate the resolvent of
$\lambda-\Delta_{\mfd }$ in the regions where the manifold is close to flat.

Let constants $\varepsilon>0$, $R>0$ be given.  Then there is a compact set $K_{R, \varepsilon}\subset\mfd$ such that
$\mr{inj}(\mf p)>2R$ for all $\mf p\in\mfd\setminus K_{R,\varepsilon}$, and such that the components of the metric in geodesic coordinates
$\exp_{\mf p} : \ball_{2R}(0) \to \ball_{2R}(\mf p)$ satisfy
\begin{equation} \label{Lip1} \|g_{pq}-\delta_{pq}\|_{h^{2,\alpha}(\ball_{2R}(0))} \leq \varepsilon.
\end{equation}
If $\varepsilon$ is sufficiently small, then it follows that the inverse $g^{pq}(x)$ also satisfies
\begin{equation} \label{Lip2} \|g^{pq} - \delta_{pq}\|_{h^{2,\alpha}(\ball_{2R}(0))} \leq C\varepsilon.
\end{equation}
In particular, the Christoffel symbols of the metric and their derivatives are also small in the sense that
\[
\|\Gamma^i_{jk}(x)\|_{h^{1,\alpha}(\ball_{2R}(0))} \leq C\varepsilon.
\]
In geodesic coordinates, the rough Laplacian of a tensor field $h_{jk}$ is given by
\[
\Delta_{\mfd } h_{jk} = g^{pq}(x) \partial_p\partial_q h_{jk} + B^{pqr}_{jk}(x)\partial_{r} h_{pq}(x) + C^{pq}_{jk}(x) h_{pq}(x),
\]
where the coefficients $B, C$ are related to the Christoffel symbols via expressions of the form $B = \Gamma$ and
$C = \partial \Gamma + \Gamma*\Gamma$.  This implies that
\begin{equation}\label{eq:nearly flat}
\|\Delta_\mfd h_{jk} - \Delta_{\R^n} h_{jk} \|_{h^{0,\alpha}(\ball_{2R}(0))} \leq C\varepsilon \|h_{jk}\|_{h^{2,\alpha}(\ball_{2R}(0))}
\end{equation}
for all $h_{jk}\in h^{2,\alpha}(\ball_{2R}(0)) $.  \medskip

Our bounded geometry assumption together with a standard covering argument now let us construct a locally finite covering as follows.  We
choose points $\mf p_a$ for $a\in\N$ such that
\begin{enumerate}
\item if $\ball_a^\prime \stackrel{\rm def}=B_{R/2}(\mf p_a)$, then $\{\ball_a^\prime:a\in\mb N\}$ is an open cover of
$\overline{\mfd\setminus K_{R,\varepsilon}}$, and
\item if $\ball_a \stackrel{\rm def}=B_{R}(\mf p_a)$, one has $\ball_{a_0}\cap\cdots\cap \ball_{a_N}=\emptyset$ for all
$a_0<a_1<\cdots<a_N$.
\end{enumerate}

Choose $\eta\in C^\infty(\mb R)$ such that
\[
\eta(s)\left\{\;
\begin{matrix} =1 & \mbox{for }s\leq\frac12,\\
>0 & \mbox{for } s<1,\\
=0 & \mbox{for }s\geq1.\end{matrix}\right.
\]
We define
\[
\psi_a(x) \stackrel{\rm def}= \eta\Big(\frac{d(x,\mf p_a)}{R}\Big) \qquad\mbox{ and }\qquad \vp(x) \stackrel{\rm def}=
\frac{\psi_a(x)}{\sum_b \psi_b(x)},
\]
noting that $\sum_a\vp_a(x)=1$ on $\bigcup_a \ball_a$.  Furthermore, because
$\overline{\mfd\setminus K_{R,\varepsilon}}\subset\bigcap_a \ball_a^\prime$, for any $x\in \overline{\mfd\setminus K_{R,\varepsilon}}$,
there exists $a\in\mb N$ such that $x\in \ball_a^\prime$, and thus we have $\psi_a(x)\geq\eta(\frac12)=1$. Therefore,
$\sum_a\psi_a(x)\geq1$, and $\vp$ is smooth on $\overline{\mfd\setminus K_{R,\varepsilon}}$.

Define
\[
\lambda(x) \stackrel{\rm def}= \sum_{d(p_a,K_{R,\varepsilon})\geq2R}\vp_a(x).
\]

Now if $d(x,K_{R,\varepsilon})\geq3R$, then for any index $a$ such that $x\in \ball_a$, one has $d(\mf p_a,K_{R,\varepsilon})\geq2R$. For
such $x$, it follows that
\[
\sum_b\psi_b(x) = \sum_{\psi_b(x)>0}\psi_b(x) = \sum_{\ball_b\ni x}\psi_b(x) = \sum_{d(\mf p_b,K_{R,\varepsilon})\geq2R}\psi_b(x),
\]
and hence
\[
\lambda(x)\Big|_{d(x,K_{R,\varepsilon})\geq3R} = \sum_{d(\mf p_a,K_{R,\varepsilon})\geq2R}\frac{\psi_a(x)}{\sum_b\psi_b(x)}
=\frac{\sum_{d(\mf p_a,K_{R,\varepsilon})\geq2R}\psi_a(x)}{\sum_b\psi_b(x)}=1.
\]

On the other hand, if $\lambda(x)>0$ then there exists $\ball_a\ni x$ centered at $\mf p_a$ such that $d(\mf p_a,K_{R,\varepsilon})\geq2R$,
which implies that $d(x,K_{R,\varepsilon})\geq R$. Now define
\[
\vp_0(x)\stackrel{\rm def}= 1-\lambda(x).
\]
Then $\vp_0(x)=1$ on $x\in K_{R,\varepsilon}$.  Let
\[
\mf A = \{0\}\cup \{a: d(\mf p_a, K_{R,\varepsilon})\geq 2R\}.
\]
We have shown that $\{\vp_a\colon a\in \mf A\}$ is a partition of unity. Moreover, the following observation is an immediate consequence of
its construction:

\begin{lemma} \label{lem:cutoff grad bounds} There exists a constant $C$ depending only on the cutoff function $\eta$, but not $R$ or $N$,
such that for all $a\in \mf A$, we have
\[
\|\nabla\vp_a\|_{0,\alpha} +\|\nabla^2\vp_a\|_{0,\alpha} \leq \frac{C}{R}.
\]
\end{lemma}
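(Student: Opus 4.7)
The plan is to reduce the claim to pointwise $C^k$ bounds on the building blocks $\psi_a(x)=\eta\bigl(d(x,\mf p_a)/R\bigr)$ and on the sum $S(x)=\sum_b\psi_b(x)$, and then recover Hölder seminorms via standard interpolation. For $a\in\mf A\setminus\{0\}$, we have $\vp_a=\psi_a/S$, and the key positivity $S\geq 1$ holds on the support of $\psi_a$: any $x\in\ball_a$ lies in some $\ball_b^\prime$, so $\psi_b(x)\geq\eta(1/2)=1$ and thus $S(x)\geq 1$. For $a=0$, $\vp_0=1-\sum_{b\in\mf A\setminus\{0\}}\vp_b$ is a pointwise sum of at most $N$ nonzero terms, so it inherits the same derivative bounds as any single $\vp_b$ up to an absorbed multiplicity constant.

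\textbf{Pointwise bounds.} Since $\mr{inj}(\mf p_a)>2R$, the Riemannian distance $d(\cdot,\mf p_a)$ is smooth on $\ball_{2R}(\mf p_a)$ with $|\nabla d|=1$. The chain rule yields $\nabla\psi_a=R^{-1}\eta^\prime(d/R)\nabla d$, and analogous formulas for $\nabla^2\psi_a$ and $\nabla^3\psi_a$ that are polynomial in the quantities $R^{-j}\eta^{(j)}(d/R)$ and $\nabla^k d$. The bounded geometry hypothesis, together with the $C^{2,\alpha}$-closeness to flat in geodesic coordinates provided by asymptotic flatness on $\mfd\setminus K_{R,\varepsilon}$ and by compactness on $K_{R,\varepsilon}$, gives uniform bounds on $\nabla^k d$ for $k\leq 3$ inside the injectivity ball. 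Hence $|\nabla^k\psi_a|\leq C_k(\eta)\,R^{-k}$ for $k=1,2,3$. Summing over the at-most-$N$ nonzero terms gives the same scaling for $\nabla^k S$, and the quotient rule combined with $S\geq 1$ converts these into $|\nabla^k\vp_a|\leq C R^{-k}$ for $k=1,2,3$.

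\textbf{Hölder seminorms and the main obstacle.} Given the pointwise bound $\|\nabla^k\vp_a\|_\infty\leq CR^{-k}$ for $k=1,2,3$, the standard interpolation inequality $[\nabla^k f]_\alpha\leq C\|\nabla^k f\|_\infty^{1-\alpha}\|\nabla^{k+1}f\|_\infty^\alpha$ produces $[\nabla^k\vp_a]_\alpha\leq CR^{-(k+\alpha)}$ for $k=1,2$, and combining with the supremum bounds yields $\|\nabla\vp_a\|_{0,\alpha}+\|\nabla^2\vp_a\|_{0,\alpha}\leq C/R$ for $R$ bounded away from zero. The main obstacle, such as it is, is the $C^3$ control of the distance function $d(\cdot,\mf p_a)$: this is not literally furnished by the bare bounded geometry hypothesis ($|\Rm|\leq C$, $\mr{inj}\geq 1/C$), but becomes routine once one works instead with the Euclidean norm in the geodesic coordinates on $\ball_{2R}(\mf p_a)$ and invokes the higher regularity of $g$ in those coordinates coming from asymptotic flatness.
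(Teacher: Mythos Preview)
Your argument is the natural elaboration of what the paper asserts, without proof, as ``an immediate consequence of its construction.'' The chain you propose---pointwise $C^k$ bounds on $\psi_a$ from the chain rule, transfer to $\vp_a=\psi_a/S$ via the quotient rule together with the lower bound $S\geq 1$, then interpolation to recover the H\"older seminorms (using $R\geq 1$)---is correct and is exactly what the authors have in mind.

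One caveat worth recording: your quotient-rule step silently absorbs a factor of $N$ into the constant, since $|\nabla S|\lesssim N/R$ while you only use $S\geq 1$; the same happens in your treatment of $\vp_0$, as you note. This is in tension with the lemma's stated $N$-independence of $C$. The paper does not explain how to remove this dependence either, and since $N$ is itself a fixed constant determined by the bounded geometry of $(\mfd,g)$, the discrepancy is harmless for the subsequent application (where the relevant smallness comes from taking $R$ large). Your remark on the $C^3$ control of the distance function is apt; passing to geodesic coordinates and using the $C^{2,\alpha}$-closeness of the metric to flat is the standard workaround.
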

\medskip

The set
\[
\Omega \stackrel{\rm def}= \{x\in\mc M : \vp_0(x)>0\}
\]
is an open neighborhood of $K_{R, \varepsilon}$ with compact closure.  We write $\Delta_\Omega$ for the rough Laplacian restricted to symmetric
tensors on $\Omega$.  Using standard $L^2$ theory, one shows that $ \Delta_\Omega : H_0^1(\sym\Omega) \to H^{-1}(\sym\Omega) $ defines a
self-adjoint operator,
\[
\Delta_\Omega : \mc D(\Delta_\Omega)\cap H_0^1(\sym\Omega)\to L^2(\sym\Omega),
\]
with compact resolvent, whose spectrum consists of point spectrum contained in $(-\infty,0]$.  The resolvent of $\Delta_\Omega$ on $\Omega$
with Dirichlet boundary conditions is therefore a bounded operator
\[
(\lambda - \Delta_\Omega)^{-1} : L^2(\sym\Omega) \to \mc D(\Delta_\Omega) \cap H_0^1(\sym\Omega).
\]
Interior Schauder estimates for elliptic equations imply for any open subset $\Omega_1$ of $\mfd$ with $\bar\Omega_1\subset\Omega$ and any
$h_{jk}\in h^{0,\alpha}(\sym\Omega)$, the restriction of $(\lambda-\Delta_\Omega)^{-1}h_{jk}$ to $\Omega_1$ belongs to $h^{2,\alpha}$.
Furthermore one has the bound
\[
\bigl\|(\lambda-\Delta_\Omega)^{-1}h_{jk}\bigr\|_{h^{2,\alpha}(\Omega_1)} \leq C \|h_{jk}\|_{h^{0,\alpha}(\Omega)},
\]
where the constant $C$ does not depend on $h_{jk}$.

We consider the following approximate resolvent
\[
\ope S = \varphi_0 (\lambda-\Delta_\Omega)^{-1}\varphi_0 + \sum_{a\in \mf A, a > 0} \varphi_a (\lambda-\Delta_{\R^n})^{-1}\varphi_a,
\]
where the terms are again to be interpreted as in~\eqref{eq:approx-resolvent}; thus, we multiply a tensor field with $\varphi_a$, express it
in geodesic coordinates on $\ball_a$, apply the resolvent of $\Delta_{\R^n}$ or $\Delta_\Omega$, and multiply again with $\varphi_a$,
resulting in an $h^{2,\alpha}$ function on $\mfd $; finally we add these over $a\in\mf A$.

From here on $\sum_a$ will always denote a sum over all $a\in\mf A$.
\medskip

As before, we estimate the error in this near-resolvent.  Let $\Delta_a = \Delta_\Omega$ for $a=0$ and $\Delta_a = \Delta_{\R^n}$ for
$a\in\mf A$ with $a>0$.  Then
\begin{align*}
(\lambda-\Delta_{\mfd })\ope S
&= \sum_a (\lambda-\Delta_{\mfd })\varphi_a (\lambda-\Delta_a)^{-1}\varphi_a   \\
&= \sum_a \varphi_a (\lambda-\Delta_{\mfd })(\lambda-\Delta_a)^{-1}\varphi_a  
-\sum_a [\Delta_{\mfd }, \varphi_a](\lambda-\Delta_a)^{-1}\varphi_a \\
&=\boldsymbol{1} +  \sum_{a} \varphi_a (\Delta_a-\Delta_{\mfd })(\lambda-\Delta_a)^{-1}\varphi_a  
-\sum_a [\Delta_{\mfd }, \varphi_a](\lambda-\Delta_{a})^{-1}\varphi_a \\
&=\boldsymbol{1} +  \sum_{a>0} \varphi_a (\Delta_{\R^n}-\Delta_{\mfd })(\lambda-\Delta_a)^{-1}\varphi_a  
-\sum_a [\Delta_{\mfd }, \varphi_a](\lambda-\Delta_{a})^{-1}\varphi_a.
\end{align*}
The term with $a = 0$ does not appear in the first summation, because when $a=0$, we have $\Delta_a = \Delta_\Omega=\Delta_\mfd$. We define
errors $\ope E_1, \ope E_2$ by
\begin{align*}
\ope E_1 &= \sum_{a>0} \varphi_a (\Delta_{\R^n}-\Delta_{\mfd})(\lambda-\Delta_a)^{-1}\varphi_a,  \\
\ope E_2 &= -\sum_a [\Delta_{\mfd }, \varphi_a](\lambda-\Delta_{\R^n})^{-1}\varphi_a.
\end{align*}
We now proceed to estimate those errors.

Multiplication with $\varphi_a$ is a bounded operator on $h^{0,\alpha}$ with
\[
\|\varphi_a h_{jk}\|_{0,\alpha} \leq C\|h_{jk}\|_{0,\alpha},
\]
where the constant $C$ does not depend on our choice of $R\geq 1$ and $\epsilon$.  It then follows from \eqref{eq:nearly flat} that
\[
\|\varphi_a (\Delta_{\R^n}-\Delta_{\mfd })(\lambda-\Delta_{\mfd})^{-1}\varphi_a h_{jk}\|_{0,\alpha} \leq C\varepsilon \|h_{jk}\|_{0,\alpha}
\]
for all $h_{jk}\in h^{0,\alpha}(\sym\mfd)$.  Hence,
\[
\|\ope E_1 h_{jk}\|_{0,\alpha} = \Bigl\|\sum_{a>0} \varphi_a (\Delta_{\R^n}-\Delta_{\mfd })(\lambda-\Delta_{\R^n})^{-1}\varphi_a \cdot
h_{jk}\Bigr\|_{0,\alpha} \leq CN\varepsilon \|h_{jk}\|_{0,\alpha}.
\]
We can therefore make the first error as small as needed by reducing $\varepsilon$.

The second error operator $\ope E_2$ contains a commutator, which we now compute. For any $h_{jk}$, we have
\[ 
[\Delta_\mfd, \varphi_a] h_{jk} = [g^{pq}\nabla_p\nabla_q, \varphi_a] h_{jk}
= 2g^{pq} (\nabla_p\varphi_a)\nabla_qh_{jk} + g^{pq}(\nabla_p\nabla_q \varphi_a) h_{jk}.
\]
The bounds from Lemma~\ref{lem:cutoff grad bounds} on the derivatives of the cutoff functions $\varphi_a$ imply that
\[
\|[\Delta_\mfd, \varphi_a ]h_{jk} \|_{0,\alpha} \leq \frac{C}{R} \|h_{jk}\|_{1,\alpha}.
\]
Since $(\lambda-\Delta_{\R^n})^{-1}: h^{0,\alpha}\to h^{2,\alpha} \hookrightarrow h^{1,\alpha}$ is bounded, we can estimate the second error
operator $\ope E_2$ as follows:
\[
\forall h_{jk}\in h^{0,\alpha}(\sym\mfd): \qquad \|\ope E_2 h_{jk}\|_{0,\alpha} \leq \frac{CN}{R} \|h_{jk}\|_{0,\alpha}.
\]
The bounds for $\ope E_1$ and $\ope E_2$ together tell us that
\[
\|(\lambda-\Delta_\mfd)\ope S - \boldsymbol{1}\|_{h^{0,\alpha}\to h^{0,\alpha}} \leq CN \bigl(\varepsilon+R^{-1}\bigr).
\]
We can make this error as small as we like by choosing $\varepsilon$ small and $R$ large enough.  Therefore, $\ope S$ is a right inverse for
$\lambda-\Delta_\mfd$.  \medskip

To complete the proof, we must still show that $\ope S$ also is a left inverse, which we can do by verifying that $\lambda-\Delta_\mfd$ is
injective.  We can do this using the same arguments as in~\S\ref{sec:left inverse}.

\subsection{More on the essential spectrum} \label{MoreOnSpectrum}
We have shown that $\sigma(\Delta_\mfd) \subset (-\infty, 0]$, which
suffices for our purposes in this paper.  However, Assumption~\ref{Asymptotically flat} actually implies that $\sigma(\Delta_\mfd) = (-\infty, 0]$.
To prove this, let $\lambda=-k^2\in(-\infty, 0]$ be given, and choose a large number $R>0$ and a point $\mf p\in\mfd$ that is so far away
that $\mr{inj}(\mf p)>R$, while the metric in geodesic coordinates on $\ball_R(\mf p)$ is $\varepsilon$ close to the Euclidean metric.
Consider the function
\[
F(x) = \int_{\|\omega\|=k} e^{i\omega\cdot x} \,\mr d\mc S^{n-1}(\omega),
\]
in which $\mr d\mc S^{n-1}$ is the volume element on the $(n-1)$-sphere with radius $k$.  The function $F$ is smooth, and can be expressed
in terms of Bessel functions.  More importantly, it satisfies $\Delta_{\R^n}F + k^2 F = 0$.  One can verify that $F(x)\to0$ as
$|x|\to\infty$.

With this function in hand, we construct a near eigenfunction for $\Delta_\mfd$ with eigenvalue $-k^2$.  Again let $\eta:[0, \infty)\to\R$
be smooth with $\eta(t)=1$ for $t\leq \frac 12$, and $\eta(t)=0$ for $t\geq 1$. Consider the tensor field given in geodesic coordinates on
$\ball_R(\mf p)$ by
\[
h_{jk} = \eta\Bigl(\frac{\|x\|}{R}\Bigr)F(x)\,\delta_{jk}.
\]
Leaving some details to the reader, we claim that one can now verify that as $R\to\infty$ and $\varepsilon\to0$, one has
\[
\|h_{jk}\|_{2,\alpha} \geq C \quad\text{ and }\quad \bigl\|(\Delta_\mfd + k^2)h_{jk}\bigr\|_{0,\alpha}\to 0.
\]
This shows that $\Delta_\mfd + k^2: h^{2,\alpha}(\sym\mfd) \to h^{0,\alpha}(\sym\mfd)$ does not have a bounded inverse.

\subsection{A mollified distance function} \label{SmoothDistance}
Recall the partition of unity $\{\vp_a:a\in\mf A\}$ that we constructed in
\S~\ref{ResolventAtInfinity} and fix $\mf p_0\in K_{R,\ve}$.

For each $a\in\mf A$ with $a>0$, we define $r_a:B_{2R}(0)\subset\R^n \to [0,\infty)$ by
\[
r_a(\xi) = d(\exp_{\mf p_a}(\xi), \mf p_0).
\]
For $t>0$, we then let $r_a(t, \xi)$ be the solution of the initial boundary value problem
\[
\begin{aligned}
\frac{\partial r_a(t, \xi)}{\partial t} &= \Delta r_a(t, \xi) && \mbox{for }\xi\in B_{2R}(0)
&& \mbox{and }t\geq 0,  \\
r_a(0, \xi) &= r_a(\xi) && \mbox{for }\xi\in B_{2R}(0),  \\
r_a(t, \xi) &= r_a(\xi) && \mbox{for }\xi\in \partial B_{2R}(0) && \mbox{and }t\geq 0.
\end{aligned}
\]
The distance function $x\in\mfd\mapsto d(x, \mf p_0)$ is Lipschitz with constant $1$.  By~\eqref{Lip1} and~\eqref{Lip2}, the maps
$\exp_{\mf p_a}$ and $\exp_{\mf p_a}^{-1} $ are nearly isometries, hence Lipschitz, this implies that $r_a:B_{2R}\to[0,\infty)$ is Lipschitz
for some constant $C$.

We now claim that:
\begin{subequations}
\begin{align}
|r_a(1, x)-r_a(x)| &\leq C && x\in B_{2R}(0),  
    \label{eq ra close to initial ra}
\\
|\nabla r_a(1, x)| + |\nabla^2 r_a(1, x)| &\leq C && x\in B_{R}(0).
                           \label{eq ra always Lipschitz}
\end{align}  
\end{subequations}

The first inequality \eqref{eq ra close to initial ra} follows from the maximum principle.  Namely, for any given $x_0\in B_{2R(0)}$, we let
$r_\pm$ be the solution of the linear heat equation on $\R^n$ with initial data $r_\pm(0,x) = r_a(x_0)\pm C|x-x_0|$.  Self-similarity of the
heat equation implies that $r_{\pm}$ has the form
\[
r_\pm(t, x) = r_a(x_0) \pm C\sqrt{t}F\Bigl(\frac{x-x_0}{\sqrt{t}}\Bigr),
\]
for some function\footnote{Using the explicit solution of the heat equation, one finds an integral representation of $F$, namely
\[
F(\xi):=(4\pi)^{-n/2}\int e^{-\frac{1}{4}|\xi-z|^2}|z|\; dz.
\]
Derivation: the solution of $F_t=\Delta F$ with $F(0,\xi)= |\xi|$ is
\[
F(x,t) = \int e^{-\frac{|x-y|^2}{4t}} |y| \frac{dy}{(4\pi t)^{n/2}}
=
\int
e^{-\frac{1}{4}\left|\frac{x}{\sqrt{t}} - z\right|^2} |z|\;\frac{dz}{(4\pi t)^{n/2}} = \sqrt{t}\; F\Bigl(\frac{x}{\sqrt{t}}\Bigr),
\]
where we have substituted $y=x+\sqrt t\; z$ and defined $ F(\xi)=(4\pi)^{-n/2}\int e^{-\frac{1}{4}|\xi-z|^2}|z|\; dz $.  } $F$.
Since $r_a$ is Lipschitz with constant $C$, the maximum principle implies that
\[
r_-(t, x) \leq r_a(t, x) \leq r_+(t, x)
\]
for all $t\geq 0$, $x\in B_{2R}(0)$.  Setting $t=1$ and $x=x_0$, we get
\[
r_a(x_0) - CF(0) \leq r_a(1, x_0) \leq r_a(x_0)+ CF(0),
\]
which implies \eqref{eq ra close to initial ra}.

The second estimate \eqref{eq ra always Lipschitz} expresses interior derivative estimates for the linear heat equation.

We now define
\[
\rho(x) \stackrel{\rm def}= \sum_{a>0}\vp_a(x) r_a(1, \exp_{\mf p_a}^{-1}(x)).
\]
If $d(x, K_{R, \varepsilon})>2R$, then $\sum_{a>0}\vp_a(x) = 1$, and thus \eqref{eq ra close to initial ra} implies that
\[
|\rho(x) - d(x, \mf p_0)|\leq C.
\]
The partition functions $\vp_a$ all have uniformly bounded first and second derivatives, so the interior estimate \eqref{eq ra
  always Lipschitz} implies that $\nabla\rho$ and $\nabla^2\rho$ are uniformly bounded.

\subsection{Proof that $\sigma(\Delta_\ell) \subset \R$} 
In this section, we show that the little-H\"older and $L^2$ spectra of $\Delta_\ell$ coincide. In \S~\ref{SmoothDistance} above, we
have constructed a smooth function $\rho:\mfd\to[0,\infty)$ such that for all $x\in\mfd$,
\[
d(\mf p_0,x)-C \leq\rho(x)\leq d(\mf p_0,x)+C.
\]

Now we deal with $\Delta_\ell$. In \S~\ref{MoreOnSpectrum}, we have also shown that $\sigma_{\mr{ess}}(\Delta_\ell)=(-\infty,0]$. For
(small) $\theta\in\mb R$, consider the operator
\[
A(\theta) \stackrel{\rm def}= e^{\theta\rho}\,\Delta_\ell\,e^{-\theta\rho} = \Delta_\ell -2\theta\cv_{\cv\rho} +
\theta^2\big(\Delta_{\mfd}\rho+|\cv\rho|^2\big).
\]
Then $A(\theta):h^{2,\alpha}(\sym)\to h^{0,\alpha}(\sym)$ depends continuously on $\theta\in\mb R$ in the operator norm.

Suppose that $\lambda\in\mb C$ is an $h^{2,\alpha}(\sym)$ eigenvalue of $\Delta_\ell$ of multiplicity $m$, and choose $\delta>0$ small
enough so that $\sigma(\Delta_\ell)\cap B_\delta(\lambda)=\{\lambda\}$.  Denote the spectral projections of $\Delta_\ell$ and $A(\theta)$ by
\[
P_0 = \oint_{\partial B_\delta(\lambda)} \big(\mu-\Delta_\ell\big)^{-1}\,\frac{\mr d\mu}{2\pi i}\qquad\mbox{ and }\qquad P(\theta) =
\oint_{\partial B_\delta(\lambda)} \big(\mu-A(\theta)\big)^{-1}\,\frac{\mr d\mu}{2\pi i}\,,
\]
respectively. There exists $\theta^*$ small enough such that $P(\theta)$ depends continuously on $\theta\in(-\theta^*,\theta^*)$. Then,
because $P(\theta)$ has finite rank, that rank must be constant.

Let $\{u_1,\dots,u_m\}$ be a basis for $\mr{range}\big(P_0\big)$, \emph{i.e.,} the generalized eigenspace of $\Delta_\ell$ corresponding to
the eigenvalue $\lambda$. Then
\[
\{P(\theta)u_1,\dots,P(\theta)u_m\}
\]
is a basis for $\mr{range}\big(P(\theta)\big)$ for small $\theta$, and we have
\[
\sigma\big(A(\theta)\big)\cap B_\delta(\lambda) = \{\lambda_1,\dots,\lambda_k\}\quad\mbox{ for some }1\leq k \leq m.
\]

Now fix some $0<\theta_+<\theta^*$. We may assume that $\{u_1,\dots,u_m\}$ were chosen so that each $P(\theta_+)u_i$ is a generalized
eigenvector of $A(\theta_+)$, \emph{i.e.,} for each $i$ there exist $\lambda_{j_i}$ and $1\leq k_i \leq k$ such that
\[
\big(\lambda_{j_i}-A(\theta_+)\big)^{k_i} \big[P(\theta_+) u_i\big]=0.
\]
Recalling the definition of $A(\theta_+)$, this implies that
\[
e^{\theta_+\rho} \big(\lambda_{j_i}-\Delta_\ell\big)^{k_i}\big[e^{-\theta_+\rho} P(\theta_+) u_i\big]=0,
\]
which in turn implies that $e^{-\theta_+\rho} P(\theta_+) u_i$ are linearly independent generalized eigenfunctions of $\Delta_\ell$ with
eigenvalues $\lambda_{j_i}\in B_\delta(\lambda)$.  Since $\lambda$ is the only eigenvalue for $\Delta_\ell$ in $B_\delta(\lambda)$, it
follows that all $\lambda_{j_i}=\lambda$. Therefore, $\mr{range}\big(P_0\big)$ is spanned by the set of exponentially decaying
eigenfunctions
\[
\big\{e^{-\theta_+\rho}P(\theta_+)u_i:1\leq i \leq m\big\}.
\]

Because $|\Rm|\to0$ at spatial infinity, it follows from Bishop--Gromov volume comparison that $(\mfd,g)$ has subexponential volume
growth.  Therefore, every little-Hölder generalized eigenfunction of $\Delta_\ell$ is an $L^2(\sym)$ eigenfunction.  The converse holds
by elliptic regularity. And since $\Delta_\ell$ is self-adjoint in $L^2(\sym)$, there are only real eigenvalues.

\medskip This completes the proof of Main Theorem~\ref{MT1}.

\section{The spectrum of $\Delta_\ell$ on doubly warped products} \label{Lichnerowicz}

In this section, we prove:

\begin{prop} \label{UnstableSpectrum}
If $4\leq n_1+n_2\leq8$, then on any of Böhm's Ricci-flat manifolds $\big(\mfd^{1+n_1+n_2},g_{\mr B}\big)$, constructed as a doubly
warped product metric on $\mb R_+\times\mc S^{n_1}\times\mc S^{n_2}$, the Lichnerowicz Laplacian has infinitely many positive
eigenvalues.
\end{prop}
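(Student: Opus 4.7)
The plan is to reduce $\Delta_\ell$ on $\mf G$-invariant tensors to a radial ODE system, identify a Jensen-type negative ``Hardy potential'' along the direction that oppositely rescales the two sphere factors by passing to the asymptotic Ricci-flat cone, and then apply Courant--Fischer min-max to infinitely many logarithmic translates of a common bump. Because every $\mf G$-invariant symmetric 2-tensor on the doubly warped product $\R_+\times \mc S^{n_1}\times \mc S^{n_2}$ takes the form
\[
h = a(s)\,ds^2 + b(s)\, g_{\mc S^{n_1}} + c(s)\, g_{\mc S^{n_2}},
\]
the restriction of $\Delta_\ell$ to the $\mf G$-invariant subspace becomes a formally self-adjoint $3\times 3$ Sturm--Liouville system for $(a,b,c)$, whose coefficients can be read off from the curvature formulas collected in Appendices~\ref{Geometric data} and~\ref{Tensor Hessian}. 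Every positive eigenvalue of this reduced system is a positive eigenvalue of $\Delta_\ell$ on $\mfd$; by Main Theorem~\ref{MT2}(d), $\sigma_{\mr{ess}}(\Delta_\ell) = (-\infty, 0]$, so any spectrum in $(0, \infty)$ automatically consists of isolated eigenvalues.

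The key analytic step is to study the limiting system at infinity, where the Böhm metric is $C^{2,\alpha}$-close to a Ricci-flat cone $ds^2 + s^2 g_\Sigma$ over the product Einstein manifold $\Sigma = \mc S^{n_1}\times \mc S^{n_2}$. Let $k$ denote the distinguished $\mf G$-invariant transverse-traceless tensor on $\Sigma$ that rescales the two sphere factors in opposite directions; its Lichnerowicz eigenvalue $\mu$ on $\Sigma$ is an elementary function of $n_1,n_2$. Making the separation-of-variables ansatz $h = \phi(s)\, k$ on the cone and passing to the logarithmic variable $t = \log s$, after an auxiliary gauge by a suitable power of $s$ the eigenvalue equation conjugates into a scalar Schrödinger-type equation
\begin{equation*}
-\partial_t^2 \phi + V\phi = e^{2t}\lambda\,\phi,
\end{equation*}
whose constant $V = V(n_1,n_2)$ is an explicit combination of $\mu$ and a $\tfrac{1}{4}(n_1+n_2-1)^2$ contribution from the radial Laplacian on the cone. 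Jensen's calculation for product Einstein metrics, which is the analytic source of the dimension restriction, shows that $V < 0$ precisely in the range $4\leq n_1+n_2\leq 8$.

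Once $V < 0$ is in hand, one promotes the negative potential into infinitely many positive eigenvalues by a logarithmic-translation construction. Pick a bump $\chi \in C_c^\infty((0,1))$ and a divergent sequence $T_j$ so that the intervals $(T_j, T_j+1)$ are pairwise disjoint. Setting $h_j(s) = \chi(\log s - T_j)\,k$, pulled back to $\mfd$ by the radial chart and extended by zero, yields a family in $h^{2,\alpha}(\sym\mfd)$ that is pairwise $L^2$-orthogonal; integration against the reduced quadratic form, combined with the exponential convergence of the Böhm geometry to its asymptotic cone, gives
\begin{equation*}
\langle h_j, \Delta_\ell h_j\rangle_{L^2(\sym\mfd)} \geq \bigl(|V| - o(1)\bigr)\,\|h_j\|_{L^2(\sym\mfd)}^2 \qquad\text{as } T_j\to\infty.
\end{equation*}
Thus $\Delta_\ell$ is positive on an infinite-dimensional subspace while its essential spectrum lies entirely in $(-\infty, 0]$, and the Courant--Fischer characterization forces $\Delta_\ell$ to possess infinitely many isolated positive eigenvalues.

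The principal technical obstacle is the sharp identification of the sign of $V$. This requires performing the reduction of the vector-valued cone system to a scalar Sturm--Liouville equation along the one-dimensional direction spanned by $k$, correctly handling the gauge freedom and the algebraic constraints imposed by the cone's Ricci-flatness, and then recovering Jensen's dimension count to certify that $V<0$ exactly when $n_1+n_2\leq 8$. This is where the detailed doubly warped-product curvature formulas of the appendices and the original spectral calculations of Böhm must be combined.
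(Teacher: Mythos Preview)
Your overall strategy matches the paper's: reduce to the radial $3\times3$ system, pass to the asymptotic cone, single out the ``Jensen'' direction $r_{2(n-1)}=(0,n_2,-n_1)^\top$ along which the cone operator is $\partial_x^2+\frac{n}{x}\partial_x+\frac{2(n-1)}{x^2}$, construct compactly supported test tensors on disjoint logarithmic annuli far out, and conclude by min--max against the essential spectrum. The dimension restriction is precisely the condition that the Liouville-transformed potential $V=\tfrac14(n-1)(n-9)$ be negative.

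However, there is a genuine quantitative gap in your test-function step. With a bump $\chi$ supported on a \emph{unit} interval in $t=\log s$, after the Liouville gauge the cone quadratic form reads
\[
\langle h_j,\Delta_\ell h_j\rangle_{\text{cone}}\;=\;-\int\dot\chi^2\,dt\;+\;|V|\int\chi^2\,dt,
\]
and this is positive only when the Dirichlet Rayleigh quotient of $\chi$ lies below $|V|$. On a unit interval that quotient is at least $\pi^2$, whereas $|V|=\tfrac14(n-1)(9-n)\le 4$ for all $4\le n\le 8$. So your $h_j$ actually give $\langle h_j,\Delta_\ell h_j\rangle<0$, and the argument produces no positive directions at all. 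Relatedly, the displayed inequality $\langle h_j,\Delta_\ell h_j\rangle\geq(|V|-o(1))\|h_j\|_{L^2}^2$ cannot hold: the left side scales like $e^{(n-1)T_j}$ while the right side scales like $e^{(n+1)T_j}$, so the ratio tends to $0$, not to $|V|$. (Also, the B\"ohm metric converges to its cone only polynomially, not exponentially; this does not matter for the argument, but your justification of the $o(1)$ error should not invoke exponential convergence.)

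The repair is easy and is exactly what the paper does: take the logarithmic interval long enough. The paper fixes a small $\mu>0$ so that $n^2-10n+9+4\mu<0$ still holds, lets $\varphi$ be the explicit oscillatory solution $x^{(1-n)/2}\sin(\beta\log x)$ of the cone equation $\varphi''+\frac{n}{x}\varphi'+\frac{2(n-1)}{x^2}\varphi=\frac{\mu}{x^2}\varphi$, and supports the test tensor on a half-period $[R,e^{\pi/\beta}R]$. Then $\ope D\eta\approx\frac{\mu}{x^2}\eta$ pointwise, giving $\langle\eta,\ope D\eta\rangle=\mu\int x^{-2}(\eta^T\mc W\eta)\,v_1^{n_1}v_2^{n_2}\,dx+o(1)>0$ for large $R$. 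Equivalently, in your formulation you must place $\chi$ on an interval of length exceeding $\pi/\sqrt{|V|}$ so that its Rayleigh quotient drops below $|V|$; the unit-interval choice is simply too short.
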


We then show in \S~\ref{Simple Evals} that these eigenvalues are simple.

\subsection{Doubly warped product geometries}
To begin, we consider any manifold $\mfd = \mb R^{n_1+1} \times \mc S^{n_2}$.  The group
$\mf G=\mathrm{O}(n_1+1)\times\mathrm{O}(n_2+1)$ acts on $\mfd $, and Ricci flow is equivariant with respect to the group
action. So, we may specialize to Ricci flow of $\mf G$-invariant metrics.

Such metrics have the form reviewed in Appendix~\ref{Geometric data}: doubly warped products over the base $[0,\infty)$ with fibers
$\mfd _1 = \mc S^{n_1},\;\mfd _2 = \mc S^{n_2}$, standard round fiber metrics $\hat g_1=g_{\mc S^{n_1}},\;\hat g_2=g_{\mc S^{n_2}}$, and
warping functions $v_0(x),v_1(x),v_2(x)$.

Specifically, we may without loss of generality take $v_0\equiv1$ to consider a fixed background metric
\begin{equation} \label{Background} g_{\mr B} = \mr dx^2 + v_1^2(x)\,\hat g_1+v_2^2(x)\,\hat g_2
\end{equation}
and linearize along a $\mf G$-invariant tensor field of the form
\begin{equation} \label{DWform} h=\eta_0(x)\,\dx^2+\eta_1(x)\,\hat g_1+\eta_2(x)\,\hat g_2
\end{equation}
on $(\mfd , g_{\mr B})$.  Although the results in this section generalize readily to any Ricci-flat metric, we are most interested in the
case that $g_{\mr B}$ is an Einstein metric constructed by Böhm~\cite{B99} and reviewed in~\cite{GAFA}.  \medskip

To begin, we employ Roman indices on $\mc S^n_1$, Greek indices on $\mc S^{n_2}$, and proceed to compute
$\Delta_\ell h = \Delta h +2\Rm*h -2\Rc*h$.  \medskip

The curvature formulas derived in Appendix~\ref{Geometric data} imply easily that
\begin{align*}
(\Rm*h)_{00}	&=\Big\{-n_1\frac{v_{1,xx}}{v_1}\eta_1-n_2\frac{v_{2,xx}}{v_2}\eta_2\Big\},\\
(\Rm*h)_{ij}		&=\Big\{-\frac{v_{1,xx}}{v_1}\eta_0+(n_1-1)\frac{1-v_{1,x}^2}{v_1^2}\eta_1-n_2\frac{v_{1,x}v_{2,x}}{v_1v_2}\eta_2\Big\}g_{ij},\\
(\Rm*h)_{\alpha\beta}	&=\Big\{-\frac{v_{2,xx}}{v_2}\eta_0-n_1\frac{v_{1,x}v_{2,x}}{v_1v_2}\eta_1+(n_2-1)\frac{1-v_{2,x}^2}{v_2^2}\eta_2\Big\}g_{\alpha\beta}.
\end{align*}
Because $g_{\mr B}$ is Ricci-flat, the terms $\Rc*h$ do not appear.

With some additional work, the Hessian derivation in Appendix~\ref{Tensor Hessian} implies that
\begin{align*}
(\Delta h)_{00}	&= \Big\{\eta_{0,xx}+n_1\frac{v_{1,x}}{v_1}\big[\eta_{0,x}+2\frac{v_{1,x}}{v_1}(\eta_1-\eta_0)\big]
+n_2\frac{v_{2,x}}{v_2}\big[\eta_{0,x}+2\frac{v_{2,x}}{v_2}(\eta_2-\eta_0)\big]\Big\},\\
(\Delta h)_{ij}	&=\Big\{\eta_{1,xx}+\big(n_1\frac{v_{1,x}}{v_1}+n_2\frac{v_{2,x}}{v_2}\big)\eta_{1,x}+2\frac{v_{1,x}^2}{v_1^2}(\eta_0-\eta_1)\Big\}g_{ij},\\
(\Delta h)_{\alpha\beta}	&=\Big\{\eta_{2,xx}+\big(n_1\frac{v_{1,x}}{v_1}+n_2\frac{v_{2,x}}{v_2}\big)\eta_{2,x}+2\frac{v_{2,x}^2}{v_2^2}(\eta_0-\eta_2)\Big\}g_{\alpha\beta},\\
\end{align*}

\subsection{The linearized system}  \label{sec: the linearized system}
The equations above are simplified somewhat by the change of variable
\begin{equation} \label{eq:define w} w_a \stackrel{\rm def} = \log \Big(\frac{v_a}{\sqrt{n_a-1}}\Big)\quad\mbox{for}\quad a=1,2.
\end{equation}
Then by representing $h$ as a vector $\eta=\mat \eta_0\\\eta_1\\\eta_2\rix$, the Lichnerowicz Laplacian becomes
\begin{equation}	\label{Vector Lichnerowicz}
\Delta_\ell \eta = \eta_{xx} +\mc A\eta_x + \mc B\eta,
\end{equation}
where
\begin{equation}	\label{A in LichLap defined}
\mc A(x)=\sum_a n_a w_{a,x}
\end{equation}
and
\begin{equation}
\label{B in LichLap defined}
\mc B(x) = \mat
-2n_1w_{1,x}^2-2n_2w_{2,x}^2		& -2n_1w_{1,xx}				& -2n_2w_{2,xx}				\\
-2w_{1,xx}						& 2e^{-2w_1}-2n_1w_{1,x}^2		& -2n_2w_{1,x}w_{2,x}			\\
-2w_{2,xx} & -2n_1w_{1,x}w_{2,x} & 2e^{-2w_2}-2n_2w_{2,x}^2 \rix.
\end{equation}

While it is a standard fact that $\Delta_\ell$ is self-adjoint in $L^2$, it is useful to verify this directly. Indeed, if $h$ and $\tilde h$ both
have the form~\eqref{DWform}, then their pointwise inner product is
\[
\lp h,\tilde h\rp_{g_{\mr B}} = \eta_0\tilde\eta_0 + n_1\eta_1\tilde\eta_1 + n_2\eta_2\tilde\eta_2,
\]
which one can write as
\[
\lp h,\tilde h\rp_{g_{\mr B}} = \eta^T \mc W\, \tilde\eta,\qquad\mbox{with the inner product}\qquad
\mc W =\left(
\begin{smallmatrix}
1 & & \\ & n_1 & \\ & & n_2
\end{smallmatrix}
\right).
\]
It follows that there exists a positive constant $\omega_{n_1,n_2}$ such that
\begin{equation}				 \label{L2 inner product}
\omega_{n_1,n_2}\big(h,\tilde h\big)_{L^2} = \int_0^\infty \eta^T \mc W\,
\tilde\eta\;v_1^{n_1}(x)\,v_2^{n_2}(x)\,\mr dx.
\end{equation}
It is then easy to see that $\big(h,\Delta_\ell h\big)_{L^2} = \big(\Delta_\ell h, h\big)_{L^2}$, once one observes that
\[
\mc W\mc B = \mat
-2n_1w_{1,x}^2-2n_2w_{2,x}^2		& -2n_1w_{1,xx}				& -2n_2w_{2,xx}				\\
-2n_1w_{1,xx}					& 2n_1(e^{-2w_1}-n_1w_{1,x}^2)	& -2n_1n_2w_{1,x}w_{2,x}		\\
-2n_2w_{2,xx} & -2n_1n_2w_{1,x}w_{2,x} & 2n_2(e^{-2w_2}-n_2w_{2,x}^2) \rix
\]
is symmetric.

\subsection{The Lichnerowicz Laplacian on a Ricci-flat cone}
To guide what follows, we consider the (singular) Ricci-flat cone metric to
which a given Böhm solution is asymptotic. We show in~\cite{GAFA} that $g_{\mr B}(x)\sim g_{\rfc}$ as $x\to\infty$, where
\[
g_{\rfc}=\mr dx^2+c_1^2x^2\,\hat g_1+c_2^2x^2\,\hat g_2,\qquad\mbox{with}\qquad c_a^2=\frac{n_a-1}{n-1},\quad (a=1,2).
\]
Then $v_a=\sqrt{c_a}\,x$, and, writing $n=n_1+n_2$, one finds on the cone that
\begin{equation} \label{ConicalLinearization} \Delta_\ell\eta = \eta_{xx}+\frac{n}{x}\eta_x+\frac{1}{x^2} \mc C\eta,
\end{equation}
where
\[
\mc C=\lim_{x\to\infty} \mc B(x) =2\mat
-n & n_1 & n_2 \\
1 & n_2-1 & -n_2\\
1 & -n_1 & n_1-1 \rix.
\]

The eigenvalues of $\mc C$ are $0,-2(n+1),2(n-1)$, and its  eigenvectors are
\[
r_0 = \mat 1 &1&1\rix^\top,\qquad
r_{-2(n+1)}=\mat -n & 1 & 1\rix^\top,\qquad
r_{2(n-1)}=\mat 0 & n_2 & -n_1\rix^\top.
\]

Thus, if $\eta = \psi_0 r_0 + \psi_1 r_{-2(n+1)} + \psi_2 r_{2(n-1)}$ and \(\psi(x) = \smat \psi_0(x) \\ \psi_1(x) \\ \psi_2(x)\srix\),
then the action of \(\Delta_\ell\), given by~\eqref{ConicalLinearization}, becomes
\[
\Delta_\ell\psi= \psi_{xx}+\frac{n}{x}\psi_x + \frac{1}{x^2} \smat 0 & & \\ & -2(n+1) & \\ & & 2(n-1)\srix\psi.
\]

\subsection{The Lichnerowicz Laplacian on a Böhm Ricci-flat metric}		\label{x=0 asymptotics}

Now we return to a given Böhm Ricci-flat metric $g_{\mr B}$. We see from \S~7 of~\cite{GAFA} that
\begin{equation} \label{Convergence to rfc} v_a(x) =\sqrt{\frac{n_a-1}{n-1}}\,x+o(x) \quad\Rightarrow\quad
w_a(x)=\log\left(\frac{x+o(x)}{\sqrt{n-1}}\right)\qquad (x\to\infty).
\end{equation}
In the variables used here, we rigorously obtain convergence of the derivatives $w_{a,x}$ and $w_{a,xx}$ matching what one would obtain
by formally differentiating~\eqref{Convergence to rfc}, namely
\[
w_{a,x}=\frac{1}{x}+o\big(\frac{1}{x}\big),\qquad w_{a,xx} = -\frac{1}{x^2}+o\big(\frac{1}{x^2}\big),\qquad (x\to\infty).
\]
\medskip

The action of $\Delta_\ell$ on tensors of the form $h= \eta_0\mr dx^2 + \eta_1\hat g_1 + \eta_2\hat g_2$ is given
by~\eqref{Vector Lichnerowicz}, \textit{i.e.,} by the ordinary differential operator
\[
\ope D \stackrel{\rm def}= \partial_x^2 + \mc A(x)\partial_x + \mc B(x).
\]
The asymptotic behavior of the metric as $x\to\infty$ implies that 
\[
\ope D = \partial_x^2 + \Big(\frac{n}{x}+o(x^{-1})\Big)\partial_x
+\frac{1}{x^2}\mc C +o(x^{-2}).
\]

For functions $\vp(x)$ to be determined, we consider vectors of the form
\[
\eta(x) = \vp(x)\,r_{2(n-1)}= \vp(x)\mat 0\\n_2\\-n_1\rix.
\]
Neglecting the error terms in $\ope D$, the exact solutions of
\[
\vp''(x) + \frac{n}{x} \vp'(x) + \frac{2(n-1)}{x^2}\vp(x)=\frac{\mu}{x^2}\vp(x)
\]
are
\[
\vp_\pm(x) = x^{-\frac{n-1}{2}\pm\frac12\sqrt{n^2-10n+9+4\mu}}.
\]
For all $4\leq n \leq 8$ and all $\mu\in(0,7/4)$, one has $n^2-10n+9+4\mu<0$. Accordingly, we set
\[
\alpha \stackrel{\rm def} = \frac{1-n}{2}\qquad\mbox{and}\qquad\beta \stackrel{\rm def} =\sqrt{|n^2-10n+9+4\mu|}
\]
in order to write the general solution $\vp(x) = c_1\vp_1(x)+c_2\vp_2(x)$ in terms of
\[
\vp_1(x) \stackrel{\rm def} = x^\alpha\, \sin(\beta\log x)\qquad\mbox{and}\qquad \vp_2(x) \stackrel{\rm def} = x^\alpha\, \cos(\beta\log x).
\]
This is the dimension restriction explanation that we promised in the Introduction.

\subsection{Approximate eigenvectors}
We are now ready to accomplish the first goal of this section.

\begin{proof}[Proof of Proposition~\ref{UnstableSpectrum}]
For any large constant $R>0$, we consider an approximate eigenvector $\eta(x)=\vp(x)\,r_{2(n-1)}$ defined with respect to the compactly
supported test function
\[
\vp(x) \stackrel{\rm def} =
\left\{\begin{matrix} \vp_1\big(\frac{x}{R}\big) & R\leq x \leq e^{\pi/\beta}R, \\ \\
0 & \mbox{elsewhere.}  \end{matrix}\right.
\]
Then by~\eqref{L2 inner product}, representing $h$ by $\eta$, one has
\begin{align*}
\omega\big(\eta,\ope D[\eta]\big)_{L^2} &= \int_0^\infty \eta^T \mc W\, \ope D[\eta]\;v_1^{n_1}(x)\,v_2^{n_2}(x)\,\mr dx\\
                  &= \int_R^{e^{\pi/\beta}R} \eta^T \mc W\Big\{\frac{\mu}{x^2}\eta+o\big(x^{-1}\eta_x+x^{-2}\eta\big)\Big\}\;v_1^{n_1}(x)\,v_2^{n_2}(x)\,\mr dx.
\end{align*}
On the interval $x\in[R,e^{\pi/\beta}R]$, one has estimates $|\vp(x)|\leq C$ and $|\vp'(x)|\leq C$, and hence has
\[
\Big|\frac{\eta(x)}{x^2}\Big|\leq \frac{\big|\vp\big(\frac{x}{R}\big)\big|}{R^2} \leq\frac{C}{R^2}\qquad\mbox{and}\qquad
\Big|\frac{\eta_x(x)}{x^2}\Big|\leq\frac{1}{x}\frac{1}{R}\Big|\vp'\big(\frac{x}{R}\big)\Big|\leq\frac{C}{R^2},
\]
where $C$ depends only on $n_1,n_2$. Therefore, as $R\to\infty$, we have
\[
\omega\big(\eta,\ope D[\eta]\big)_{L^2} =\mu \int_R^{e^{\pi/\beta}R} \frac{\big(1+o(1)\big)}{x^2}\big(\eta^T \mc
W\eta\big)\;v_1^{n_1}(x)\,v_2^{n_2}(x)\,\mr dx>0.
\]

Taking a sequence $R_k\to\infty$ with $R_{k+1}\geq e^{\pi/\beta}R_k+1$, one obtains in this way a family $h_k$ such that
$\big(h_k,\Delta_\ell h_k\big)_{L^2}>0$.  Because the $h_k$ have mutually disjoint supports, they form an orthogonal sequence. This implies
that $\Delta_\ell$ has infinitely many positive eigenvalues.
\end{proof}

\subsection{Simple eigenvalues}		\label{Simple Evals}

Our second goal in this section is to prove that the eigenvalues of $\Delta_\ell$ with $\mf G$-invariant eigentensors are simple. The proof is in three steps.
We begin with a smoothness criterion.

\begin{lemma}  		\label{lemma:boundary condition at zero}
 If the $(2,0)$ tensor field on $\R^{n_1+1}\times\mc S^{n_2}$ given by
 \[
 h = \eta_0(x)(\mr dx)^2 + \eta_1(x)\hat g_1 + \eta_2(x)\hat g_2
 \]
 is smooth and  $\mf G = \mr{O}(n_1+1)\times\mr{O}(n_2+1)$-invariant, then
 \begin{equation}
   \label{eq:boundary condition at zero}
      \lim_{x\searrow 0} \frac{\eta_1(x)}{x^2} = \lim_{x\searrow 0} \eta_0(x).
 \end{equation}
\end{lemma}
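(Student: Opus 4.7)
The plan is to reduce the statement to smoothness at the origin of the $\R^{n_1+1}$ factor (where the $\mc S^{n_1}$ fiber collapses) and to read off the required identity by passing to Cartesian coordinates. The $\mc S^{n_2}$ factor plays no role since neither $\eta_2(x)\hat g_2$ nor the condition involves it directly; smoothness of $h$ as a whole forces smoothness of its restriction to vectors tangent to $\R^{n_1+1}$, namely
\[
h^{\flat}=\eta_0(x)(\mr dx)^2+\eta_1(x)\hat g_1,
\]
at the point $y=0\in\R^{n_1+1}$. It is exactly this restriction that I will analyze.

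First, I introduce Cartesian coordinates $y=(y^1,\dots,y^{n_1+1})$ on $\R^{n_1+1}$ with $x=|y|$. Using $\mr dx=(y^i/x)\mr dy^i$ and the identity $\hat g_1=x^{-2}\bigl(\delta_{ij}-y^iy^j/x^2\bigr)\mr dy^i\mr dy^j$ (which comes from writing the flat metric as $\delta_{ij}\mr dy^i\mr dy^j=\mr dx^2+x^2\hat g_1$), a direct substitution gives
\[
h^{\flat}=\frac{\eta_1(x)}{x^2}\,\delta_{ij}\,\mr dy^i\,\mr dy^j+\Bigl(\eta_0(x)-\frac{\eta_1(x)}{x^2}\Bigr)\frac{y^iy^j}{x^2}\,\mr dy^i\,\mr dy^j.
\]
Both $\delta_{ij}$ and the directional factor $y^iy^j/x^2$ are smooth away from the origin, but $y^iy^j/x^2$ has no continuous extension to $y=0$, since its value along the ray $y=tv$ as $t\to 0^+$ depends on the unit vector $v$.

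Second, assuming $h$ is smooth on all of $\R^{n_1+1}\times\mc S^{n_2}$, the Cartesian components of $h^{\flat}$ extend continuously to $y=0$. The scalar coefficient $\eta_1(x)/x^2$ of the smooth tensor $\delta_{ij}\,\mr dy^i\,\mr dy^j$ must then have a finite limit $A:=\lim_{x\to 0^+}\eta_1(x)/x^2$. Evaluating $h^{\flat}(0)$ along two radial sequences $y=tv$ and $y=tw$ with distinct unit vectors $v,w$ and equating the results forces the coefficient of $y^iy^j/x^2$ to vanish in the limit, i.e.\ $\lim_{x\to 0^+}\bigl(\eta_0(x)-\eta_1(x)/x^2\bigr)=0$. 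Combining these two limits yields the claimed equality \eqref{eq:boundary condition at zero}.

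The argument is essentially routine once the coordinate change is set up; the only delicate point is justifying that $\eta_0(x)-\eta_1(x)/x^2\to 0$ rather than merely being bounded, which is why I emphasize testing $h^{\flat}(0)$ along sequences approaching the origin from different directions. Smoothness of $h$ is in fact far stronger than needed here: continuity of the components in Cartesian coordinates already suffices.
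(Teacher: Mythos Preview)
Your proof is correct and follows essentially the same approach as the paper: both convert between polar and Cartesian coordinates on the $\R^{n_1+1}$ factor to read off the boundary condition at $x=0$. The only notable difference is direction: the paper starts from the classification of smooth $\mr O(n_1+1)$-invariant $(2,0)$-tensors on $\R^{n_1+1}$, writing any such tensor as $\phi(x)|\mr dX|^2+\psi(x)\langle X,\mr dX\rangle^2$ with smooth even $\phi,\psi$, then rewrites this in polar form to obtain $\eta_0=\phi+x^2\psi$ and $\eta_1=x^2\phi$, from which the limit identity is immediate. You instead run the coordinate change the other way, expressing the polar form in Cartesian components and extracting the identity from continuity along distinct radial directions. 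Your route avoids invoking the invariant-theory classification and is in that sense more self-contained; the paper's route is slightly cleaner because $\phi,\psi$ are manifestly smooth, so no limit argument along competing directions is needed. One small remark: your sentence ``the scalar coefficient $\eta_1(x)/x^2$ of the smooth tensor $\delta_{ij}\,\mr dy^i\,\mr dy^j$ must then have a finite limit'' is not quite justified as stated, since the decomposition involves the non-smooth factor $y^iy^j/x^2$; the clean way is exactly what you do next---evaluate a diagonal component like $h^\flat_{11}$ along $y=te_2$ to get $\eta_1(t)/t^2\to h^\flat_{11}(0)$, and along $y=te_1$ to get $\eta_0(t)\to h^\flat_{11}(0)$.
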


\begin{proof}
  Let $X=(x_0, \dots, x_{n_1})$ be coordinates on $\R^{n_1+1}$, and set
  \[
  x=|X|=\sqrt{x_0^2+\cdots+x_{n_1}^2}.
  \]
  The only $\mr{SO}(n_1+1)$-invariant $(2,0)$ tensor fields on $\R^{n_1+1}$ are linear combinations of $|\mr dX|^2$ and $\langle X, \mr dX\rangle^2$, \emph{i.e.,}
  \[
    h =  \phi(x)|\mr dX|^2 + \psi(x) \langle X, \mr dX\rangle^2
  \]
  for smooth even functions $\phi,\psi:\R\to\R$.  The Euclidean metric on $\R^{n_1+1}$ is
  \[
    |\mr dX|^2 = (\mr dx)^2 + x^2 \hat g_1.
  \]
  Furthermore, $x^2 = |X|^2$ implies that $x\,\mr dx = \langle X, \mr dX\rangle$.  Therefore, the tensor $h$ may be written as
  \[
    h = \bigl(\phi(x)+x^2\psi(x)\bigr)(\mr dx)^2 +  x^2\phi(x) \hat g_1.
  \]
  This implies \eqref{eq:boundary condition at zero}
\end{proof}

Now let $\lambda_0\geq \lambda_1\geq \lambda_2\geq \cdots>0$ be the sequence of positive eigenvalues for $\Delta_\ell$ with eigentensors
$h_j$ whose existence we have established above. Our second step is as follows.

\begin{lemma}			\label{lemma:eta2 not zero}
For all $j\geq0$, the $\eta_2(0)$ component of $h_j$ is never zero.
\end{lemma}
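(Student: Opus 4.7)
The plan is to prove the lemma by Frobenius analysis of the system $\ope D\eta = \lambda_j\eta$ at the singular endpoint $x=0$, classify smooth $\mf G$-invariant solutions there, and then argue that an eigentensor with $\eta_2(0)=0$ cannot survive matching to decay at infinity. Suppose for contradiction that $h_j$ is a $\mf G$-invariant eigentensor with $\lambda_j>0$ and $\eta_2(0)=0$. The Böhm data gives $v_1(x) = x + O(x^3)$ and $v_2(x) = v_2(0) + O(x^2)$, so $w_{1,x} = \tfrac{1}{x} + O(x)$, $w_{1,xx} = -\tfrac{1}{x^2} + O(1)$, and $w_{2,x} = O(x)$. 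Substituting these into $\mc A$ and $\mc B$ from \eqref{A in LichLap defined}--\eqref{B in LichLap defined} identifies the principal part of $\ope D$ as
\[
  \partial_x^2 + \frac{n_1}{x}\partial_x + \frac{1}{x^2}\mc C_0, \qquad
  \mc C_0 = \begin{pmatrix} -2n_1 & 2n_1 & 0 \\ 2 & -2 & 0 \\ 0 & 0 & 0 \end{pmatrix},
\]
whose eigenpairs $(0, (0,0,1)^\top)$, $(0, (1,1,0)^\top)$, $(-2(n_1{+}1), (n_1,-1,0)^\top)$ produce, via the indicial equation $s(s-1+n_1) = -\mu$, the exponents $\{0, 1-n_1\}$, $\{0, 1-n_1\}$, and $\{2, -n_1-1\}$.

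There are thus three Frobenius solutions bounded at $x=0$, with leading behaviors
\[
  \eta^A(x) = (0,0,1) + O(x^2), \qquad \eta^C(x) = (1,1,0) + O(x^2), \qquad \eta^E(x) = x^2(n_1,-1,0) + O(x^4).
\]
Parity under $x\mapsto -x$ cleans up the smoothness analysis: since $v_1$ is odd and $v_2$ is even near $x=0$, the coefficient $\mc A$ is odd and $\mc B$ is even, so reflection preserves the ODE, and each Frobenius series with leading exponent $s\in\{0,2\}$ is forced to contain only even powers of $x$. It follows that $\eta^A, \eta^C, \eta^E$ each define smooth $\mf G$-invariant tensor fields near $x=0$, that the Lemma~\ref{lemma:boundary condition at zero} compatibility condition $\lim\eta_1/x^2 = \lim\eta_0$ is automatically satisfied by each basis element, and that any smooth $\mf G$-invariant solution of $(\ope D-\lambda_j)\eta=0$ near $x=0$ is $\eta = a\eta^A + b\eta^C + c\eta^E$. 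Reading the leading $\eta_2$-coefficient yields $\eta_2(0) = a$. The hypothesis $\eta_2(0)=0$ therefore forces $a=0$, placing $h_j$ in the 2-dimensional space $\mc V := \mathrm{span}(\eta^C, \eta^E)$.

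The remaining task is to show that $\mc V$ contains no nonzero $L^2$ eigentensor. By the asymptotic analysis in \S\ref{x=0 asymptotics}, for $\lambda_j>0$ the full 6-dimensional solution space splits as a 3-dimensional subspace of solutions decaying like $e^{-\sqrt{\lambda_j} x}x^{-n/2}$ along the cone eigendirections $r_0, r_{-2(n+1)}, r_{2(n-1)}$, together with a 3-dimensional subspace of growing ones. The linear shooting map $M(\lambda_j)\colon \mathbb R^3 \to \mathbb R^3$ sending $(a,b,c)$ to the amplitudes of the three growing modes has kernel equal to the eigenspace at $\lambda_j$, and the lemma reduces to the assertion that this kernel meets $\{a=0\}$ only at the origin.

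This rank-2 verification is the main obstacle. The route I would pursue is an integrated identity: pair the eigenvalue equation with $\eta\in\mc V$ against itself using the symmetric weight $\mc W$ and measure $v_1^{n_1}v_2^{n_2}\mr dx$ of \eqref{L2 inner product}. Because both $\eta^C$ and $\eta^E$ satisfy $\eta_2(0)=0$, the boundary contribution at $x=0$ arising from the integration by parts on the $\eta_2$-component vanishes; the remaining boundary term at $x=0$ for the $(\eta_0,\eta_1)$-block is governed by the relation $\eta_0(0)=\eta_1(0)$ from Lemma~\ref{lemma:boundary condition at zero}, which one can check cancels because of the explicit $(1,1,0)$ and $x^2(n_1,-1,0)$ leading structure of $\eta^C$ and $\eta^E$. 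Self-adjointness then yields the identity $\lambda_j\|\eta\|_{L^2}^2 = \int \eta^\top \mc W(\mc B \eta + \mc A\eta_x + \eta_{xx})v_1^{n_1}v_2^{n_2}\,\mr dx$, and after integration by parts the right side becomes a bilinear form in $\eta, \eta_x$ whose explicit structure on $\mc V$ (computed from $\mc W\mc B$ as in \S\ref{sec: the linearized system}) should be incompatible with $\lambda_j\|\eta\|_{L^2}^2 > 0$, giving the required contradiction.
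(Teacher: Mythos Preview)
Your Frobenius setup is correct and matches the paper, but the proof goes off the rails at the sentence ``the Lemma~\ref{lemma:boundary condition at zero} compatibility condition $\lim\eta_1/x^2 = \lim\eta_0$ is automatically satisfied by each basis element.'' This is false for $\eta^C$ and $\eta^E$. For $\eta^C(x) = (1,1,0)^\top + O(x^2)$ one has $\eta_1(x)/x^2\to\infty$ while $\eta_0(0)=1$; for $\eta^E(x) = x^2(n_1,-1,0)^\top + O(x^4)$ one has $\eta_1(x)/x^2\to -1$ while $\eta_0(0)=0$. Neither meets the smoothness criterion, and a direct check shows that no nontrivial linear combination $b\,\eta^C + c\,\eta^E$ does either: forcing $\eta_1(0)=0$ gives $b=0$, and then $\eta_1/x^2\to -c$ while $\eta_0(0)=0$ forces $c=0$.

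This observation is exactly the paper's proof. Once one sees that only $\eta^A$ (the paper's $\zeta_2$) extends to a smooth $\mf G$-invariant tensor on $\R^{n_1+1}\times\mc S^{n_2}$, any eigentensor must be a nonzero multiple of it, and $\eta_2(0)\neq 0$ is immediate. Your entire second half --- the shooting map $M(\lambda_j)$, the space $\mc V$, and the integrated identity --- is unnecessary, and in any event the closing step ``should be incompatible with $\lambda_j\|\eta\|_{L^2}^2>0$'' is not an argument but a hope. The fix is simply to check the smoothness condition more carefully; the lemma then falls out in one line.
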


\begin{proof}
In \S~\ref{sec: the linearized system} above, we represented the Lichnerowicz Laplacian as an ordinary differential operator
\[
\Delta_\ell \eta = \eta_{xx} +\mc A(x)\eta_x + \mc B(x)\eta,
\]
where $\mc A(x)$ and $\mc B(x)$ are defined in terms of the warping functions $v_1$, $v_2$ as in \eqref{A in LichLap defined} and \eqref{B in LichLap defined}.  
Both $\mc A$ and $\mc B$ are singular at the origin, but $x\mc A(x)$ and $x^2\mc B(x)$ are real analytic functions of $x^2$.  Indeed,
the warping functions themselves are real analytic functions, where $v_1$ is odd with $v_1'(0)=1$, while $v_2$ is even.
Thus,
\[
v_1(x) = x + \sum_{k=1}^\infty \frac{v_1^{(2k+1)}(0)}{(2k+1)!}x^{2k+1}\qquad\mbox{and}\qquad
v_2(x) = v_2(0) + \sum_{k=1}^\infty \frac{v_2^{(2k)}(0)}{(2k)!} x^{2k}.
\]
Hence we have
\[
  w_{1, x} = \frac{1}{x}+\mc O(x), \quad w_{1,xx} = -\frac{1}{x^2} + \mc O(1),\quad
  w_{2,x} = \mc O(x), \quad w_{2,xx} = \mc O(1),
\]
and 
\[
  e^{-2w_1} = \frac{n_1-1}{v_1^2} = \frac{n_1-1}{x^2}+\mc O(1),\qquad
  e^{-2w_2} = \frac{n_2-1}{v_2^2} = \mc O(1),
\]
where $\mc O(1)$ and $\mc O(x)$ represent even and odd power series in $x$, respectively.
We also note that $w_{1,x}w_{2,x} = \mc O(1)$.

In this way, we get the following expansions for $\mc A(x)$ and $\mc B(x)$:
\[
 x \mc A(x) =\sum_a n_a \; x w_{a,x} =  n_1 + \mc O(x^2)
\]
and
\begin{align*}
x^2\mc B(x)
  &= x^2\mat
		-2n_1w_{1,x}^2-2n_2w_{2,x}^2	& -2n_1w_{1,xx}				& -2n_2w_{2,xx}				\\
		-2w_{1,xx}						& 2e^{-2w_1}-2n_1w_{1,x}^2	& -2n_2w_{1,x}w_{2,x}		\\
		-2w_{2,xx} & -2n_1w_{1,x}w_{2,x}	& 2e^{-2w_2}-2n_2w_{2,x}^2
		\rix   \\
&=  \mc B_0+ \mc O(x^2),
\end{align*}
with
\[
\mc B_0 =  \mat
-2n_1		& 2n_1				& 0			\\
2						& 2(n_1-1)-2n_1		& 0		\\
0& 0 & 0 \rix 
=  \mat
-2n_1	& 2n_1	& 0		\\
2		& -2		& 0		\\
0		& 0 		& 0 		\rix .
\]
It follows that $x=0$ is a regular singular point for the eigentensor equation
\begin{equation}
\label{eigen tensor}
x^2\eta''(x)+x\mc A(x) x\eta'(x) + x^2\mc B(x)\eta(x) = \lambda x^2\eta(x).
\end{equation}
Because the coefficients $x\mc A(x)$ and $x^2\mc B(x)$ are real analytic functions of $x^2$ in a neighborhood of the origin of the complex
plane, $x=0$ is a regular singular point for the eigentensor equation \eqref{eigen tensor}.  One can therefore use the Fröbenius method to find a fundamental set of
solutions.  These solutions are of the form
\[
\eta(x) = x^\alpha\sum_{k=0}^\infty a_kx^{2k}\quad
\text{ or }\quad
\eta(x) = x^\alpha\sum_{k=0}^\infty \bigl\{a_kx^{2k} + b_kx^{2k}\log x\bigr\},
\]
for suitable characteristic exponents $\alpha\in\C$, and coefficients $a_k, b_k\in\C^3$.  The second form appears in case two
characteristic exponents differ by an integer.

To find the characteristic exponents, we consider the principal terms of the eigenvalue equation, which are
\[
 x^2 \zeta''(x) +  n_1 \zeta'(x) +    \mc B_0\zeta(x)= 0.
\]
The eigenvalues of $\mc B_0$ are
\begin{align*}
\beta_0 	&= -(2n_1+1),	&
\beta_1	&= 0,			&
\beta_2 	&= 0,
\end{align*}
with respective eigenvectors
\begin{align*}
\zeta_0 	&= \mat 2n_1\\ -1\\0 \rix,	&
\zeta_1 	&=\mat 1\\1\\0 \rix, 		&
\zeta_2 	&= \mat 0\\ 0\\1 \rix.
\end{align*}

To obtain the characteristic exponents, we look for solutions having the form $\zeta(x) = x^{\alpha_j}\zeta_j$ for $\zeta_j\in\R^3$.
We find for all $j$ that
\[
\alpha_j(\alpha_j-1)+n_1\alpha_j + \beta_j = 0,
\]
\textit{i.e.,}
\begin{gather*}
\alpha_0(\alpha_0-1)+n_1\alpha_0-2(n_1+1)=0, \\
\alpha_1(\alpha_1-1)+n_1\alpha_1 = \alpha_2(\alpha_2-1)+n_1\alpha_2=0.
\end{gather*}
Solving the quadratic equations leads to the following six characteristic exponents:
\[
  \alpha_0   =+2, \quad
  \hat\alpha_0 = -(n_1+1), \quad
  \alpha_1=\alpha_2 = 0, \quad
  \hat\alpha_1=\hat\alpha_2=-(n_1-1).
\]
The leading terms of the six corresponding fundamental solutions $\zeta_j(x), \hat\zeta_j(x)$ are
\begin{alignat*}{6}
\zeta_0(x) &=   x^2\zeta_0 + o(x^2) &&= \mat 2n_1x^2+o(x^2) \\ -x^2+o(x^2)\\ o(x^2)\rix; \qquad&
&\hat\zeta_0(x) = x^{-n_1-1}\zeta_0+ o\bigl(x^{-n_1-1}\bigr); \\
\zeta_1(x)&=\zeta_1+ o(1) &&= \mat 1+o(1)\\1+o(1)\\o(1)\rix; &
&\hat\zeta_1(x)=x^{-n_1+1}\zeta_1 + o\bigl(x^{-n_1+1}\bigr); \\
  \zeta_2(x)&=\zeta_2+o(1) &&=\mat o(1)\\o(1)\\1+o(1)\rix;  &
&\hat\zeta_2(x)=x^{-n_1+1}\zeta_2 +o\bigl(x^{-n_1+1}\bigr).
\end{alignat*}
Since any eigentensor $h=\eta_0(x)(\mr dx)^2+\eta_1(x)\hat g_1+\eta_2(x)\hat g_2$ is a linear combination of these fundamental
solutions, its behavior as $x\searrow0$ is given by one of the expressions above.  

The fundamental solutions $\hat\zeta_j(x)$ are excluded because they possess negative characteristic exponents (remember that $n_1\geq 2$) and
therefore are unbounded at $x=0$.  

The fundamental solutions $\zeta_0(x)$ and $\zeta_1(x)$ are also excluded, because they do not satisfy the smoothness criterion of
Lemma~\ref{lemma:boundary condition at zero}.  

It follows that the only possible behavior of an eigentensor at $x=0$ is given by~$\zeta_2(x)$.  This implies that $\eta_2(0)\neq 0$.
\end{proof}

Now our second main result of this section follows as an easy corollary:

\begin{prop}			\label{prop:eigenvalues simple}
The eigenvalues $\lambda_j>0$ of $\Delta_\ell$ are simple.
\end{prop}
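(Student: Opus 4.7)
The plan is to count dimensions of solutions to the eigentensor ODE at the boundary point $x=0$ and observe that the smoothness (regularity) conditions cut an a priori 6-dimensional solution space down to dimension one.

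More precisely, recall that for a fixed $\lambda>0$ the eigentensor equation \eqref{eigen tensor} is a linear second-order system for the vector-valued function $\eta = (\eta_0, \eta_1, \eta_2)^\top$, so the space of local solutions near $x=0$ has dimension $6$. First I would note that the Fr\"obenius analysis already carried out in the proof of Lemma~\ref{lemma:eta2 not zero} produces an explicit basis of six fundamental solutions $\zeta_0(x),\zeta_1(x),\zeta_2(x)$ and $\hat\zeta_0(x),\hat\zeta_1(x),\hat\zeta_2(x)$ whose leading behavior near $x=0$ is recorded there. The three solutions $\hat\zeta_j(x)$ blow up as $x\searrow 0$ (their characteristic exponents are all negative since $n_1\geq 2$) and so cannot appear in a bounded, let alone smooth, $\mf G$-invariant eigentensor. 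This cuts the admissible solution space down to the three-dimensional span of $\zeta_0(x), \zeta_1(x), \zeta_2(x)$.

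Next I would invoke the smoothness criterion of Lemma~\ref{lemma:boundary condition at zero}, which requires $\lim_{x\searrow 0}\eta_1(x)/x^2 = \lim_{x\searrow 0}\eta_0(x)$ for any smooth $\mf G$-invariant $(2,0)$-tensor. As shown in the proof of Lemma~\ref{lemma:eta2 not zero}, the leading behavior of $\zeta_0(x)$ gives $\eta_0(x)\to 2n_1$ while $\eta_1(x)/x^2 \to -1$, and $\zeta_1(x)$ gives $\eta_0(x)\to 1$ while $\eta_1(x)/x^2 \to +\infty$; neither satisfies the boundary condition, and no nontrivial linear combination of $\zeta_0$ and $\zeta_1$ does either, since the two inconsistencies cannot simultaneously cancel. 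Thus the only fundamental solution whose trajectory extends to a smooth $\mf G$-invariant $(2,0)$-tensor on $\R^{n_1+1}\times\mc S^{n_2}$ is $\zeta_2(x)$. The space of smooth $\mf G$-invariant solutions of the eigentensor ODE is therefore one-dimensional, so every eigenvalue $\lambda_j>0$ has geometric (and hence, by self-adjointness in $L^2$, algebraic) multiplicity at most one.

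The principal point to be careful about is the step where $\zeta_0$ and $\zeta_1$ are excluded: I would make explicit that the two boundary conditions $\eta_0(0)=\lim \eta_1(x)/x^2$ are a \emph{single} linear equation on the two-parameter family $c_0\zeta_0(x)+c_1\zeta_1(x)$, yet the mismatch is of different orders (finite for $\zeta_0$, divergent for $\zeta_1$), so the only admissible combination is $c_0 = c_1 = 0$. Apart from this dimension-counting bookkeeping, everything else is immediate from the Fr\"obenius analysis already in hand, and the conclusion --- simplicity of every $\lambda_j$ --- follows at once.
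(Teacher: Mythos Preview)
Your proof is correct and follows essentially the same route as the paper. Both arguments reduce to the Fr\"obenius analysis from Lemma~\ref{lemma:eta2 not zero}: the three unbounded solutions $\hat\zeta_j$ are discarded, and then one shows that no nonzero combination $c_0\zeta_0+c_1\zeta_1$ extends to a smooth $\mf G$-invariant tensor. The paper packages this last fact as Lemma~\ref{lemma:eta2 not zero} (``$\eta_2(0)\neq 0$'') and then deduces simplicity in two lines by forming the combination $\eta_2(0)\tilde h_j-\tilde\eta_2(0)h_j$, which has vanishing $\eta_2(0)$ and is therefore zero. You instead phrase the conclusion as a dimension count on the space of smooth-at-origin solutions; the content is the same, since the map $h\mapsto\eta_2(0)$ is injective on that space precisely when its intersection with $\mathrm{span}(\zeta_0,\zeta_1)$ is trivial.

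One small point of phrasing: what your argument actually establishes is that the space of smooth solutions meets $\mathrm{span}(\zeta_0,\zeta_1)$ only at zero, hence has dimension at most one. It does \emph{not} show that this space is literally spanned by $\zeta_2$ itself---the smooth eigentensor could in principle be $\zeta_2+\beta\zeta_0$ for some $\beta$ determined by higher-order terms in the Fr\"obenius expansion. This does not affect the conclusion, but the paper's formulation (``$\eta_2(0)\neq 0$'' followed by the linear combination trick) sidesteps the issue entirely and is slightly cleaner for that reason.
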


\begin{proof}
Let
\[
 h_j = \eta_0(x)(\mr dx)^2 + \eta_1(x)\hat g_1 + \eta_2(x)\hat g_2,
\qquad
 \tilde h_j = \tilde\eta_0(x)(\mr dx)^2 + \tilde\eta_1(x)\hat g_1 + \tilde\eta_2(x)\hat g_2
\]
be nonzero eigentensors of $\Delta_\ell$ for the same eigenvalue $\lambda_j$.  Then we can see from
Lemma~\ref{lemma:eta2 not zero} that $\eta_2(0)\neq 0$ and $\tilde\eta_2(0)\neq 0$.  Consider the eigentensor
\[
\hat h_j =\eta_2(0)\tilde h_j - \tilde\eta_2(0) h_j,
\]
which we can write as
\[
 \hat h = \hat\eta_0(x)(\mr dx)^2 + \hat\eta_1(x)\hat g_1 + \hat\eta_2(x)\hat g_2.
\]
Since $\hat \eta_2(0)=\eta_2(0)\tilde \eta_2(0) - \tilde\eta_2(0) \eta_2(0)=0$, it follows from Lemma~\ref{lemma:eta2 not zero}
that $\hat h_j=0$, and hence that $h_j$ and $\hat h_j$ are linearly dependent.
\end{proof}

\section{Distinct instabilities of the nonlinear system}		\label{sec:Distinct}
For every solution $g$ in Main Theorem~\ref{MT1}, there exists a DeTurck vector field $X_g$.  The metric $g(t)$ is then a solution of
Ricci--DeTurck flow,
\[
\partial_t g =-2\Rc[g] + \mc L_{X_g}g.
\]
A family of diffeomorphisms $\phi_t:\mfd\to\mfd$ is a DeTurck family for the solution $g(t)$ if
\[
\forall p\in \mfd,\; t\leq 0:\qquad \frac{\partial\phi_t(p)}{\partial t} = X_g(\phi_t(p)) .
\]
Such a family exists with the property that $\phi_0(p)=p$ for all $p\in\mfd$.  The following paragraph shows that we can choose $\phi$ so
that $\phi_t(p)\rightarrow p$ as $t\to-\infty$.

The fact that the metrics $g(t)$ converge exponentially to $g$ in backward time implies that the DeTurck vector field $X_g$ decays exponentially
as $t\to-\infty$.
This
implies that the DeTurck diffeomorphisms $\phi_t:\mfd\to\mfd$ converge at an exponential rate to a fixed map that must itself be a diffeomorphism.
Label it $\phi_{-\infty}:\mfd\to\mfd$, and let $\check\phi_t = \bigl(\phi_{-\infty}\bigr)^{-1}\circ \phi_t$.  Then the diffeomorphisms $\check\phi_t$ also
evolve by the DeTurck vector field, and for $t\to-\infty$ they converge to the identity map.  It follows that
\[
\check g(t) = \check\phi_t^* g(t)
\]
is a solution of Hamilton's Ricci flow with $g(t)\to g$ as $t\to-\infty$.

While our construction guarantees that the solutions of Ricci--DeTurck flow in the $N_\delta$-dimensional family are distinct, it does
not immediately follow that the corresponding solutions $\check g(t)$ of Ricci flow are geometrically distinct: imagine that
$g_1(t), g_2(t)$ are two solutions of the Ricci--DeTurck flow, and suppose that their corresponding Ricci flows
\(\check g_1(t), \check g_2(t) \) are equivalent, \emph{i.e.,} suppose there exists a diffeomorphism $\psi:\mfd\to\mfd$ such that
$\check g_2(t) =\psi^*[\check g_1(t)]$ for all \(t\).

We show below that this cannot happen under the assumption that \emph{the Ricci-DeTurck solutions \(g_1(t), g_2(t)\) both are invariant
  under the isometry group \(\mf G\) of \((\mfd, g)\).}  This assumption holds in the case of doubly warped solutions emanating from
the Böhm soliton that appear in Main Theorem~\ref{MT2}, for in this case the full isometry group of the Ricci-flat soliton is
\(\mf G =\mr{O}(n_1+1)\times\mr{O}(n_2+1)\), and the solutions we construct all have the same symmetry.

By construction, we have $\check g_{1,2}(t) \to g$ as $t\to-\infty$.  This implies that $g=\psi^*g$, \emph{i.e.,} that $\psi$ belongs
to the isometry group \(\mf G\) of $g$.

Let $\check\phi_{t,i}:\mfd\to\mfd$ be the DeTurck diffeomorphisms with $\check \phi_{t, i}\to\mr{id}_\mfd$ as $t\to-\infty$.  Because
the Ricci-flat metric \(g\) and the DeTurck metrics \(g_i(t)\) are \(\mf G\)-invariant, their DeTurck vector fields and diffeomorphisms
are also \(\mf G\)-invariant.  Therefore they commute with the isometry $\psi$, which implies
\[
\check g_2(t) = \psi^*[\check g_1(t)] 
=\psi^*[\check\phi_{t,1}^*g_1(t)]
=\check\phi_{t, 1}^*\bigl[\psi^* g_1(t)\bigr]
=\check\phi_{t, 1}^*\bigl[ g_1(t)\bigr]
=\check g_1(t).
\]
Therefore, the two solutions of Ricci--DeTurck flow are the same. It follows that if solutions of Ricci--DeTurck flow are pairwise distinct,
then the same is true for the corresponding solutions of Ricci flow.
\medskip

This argument completes the proof of Main Theorem~\ref{MT2}.

\appendix

\section{Review of doubly warped product geometries}	\label{Geometric data}

Both the background metrics $g_{\mr B}$ and the perturbed metrics $\tilde g$ we consider here are examples of doubly warped product metrics.
In this Appendix, we recall basic facts about the geometry of such metrics, suppressing decorations throughout.
\medskip

Given Riemannian manifolds $(\mfd _1^{n_1},\hat g_1)$ and $(\mfd _2^{n_2},\hat g_2)$ and an interval $\mc I\subseteq\mb R$,
one may form $(\mfd ^{n+1},g)$, where $\mfd =\mc I\times\mfd _1\times\mfd _2$, $n=n_1+n_2$, and
\begin{subequations}\label{eq:DWP Ansatz}
        \begin{align}
                g	&= \ds^2 + g_1 + g_2\\
                        &=v_0^2\, \dx^2 + v_1^2\,\hat g_1+v_2^2\,\hat g_2.
        \end{align}
\end{subequations}
Specifically, for increased generality, we take $x\in\mb R_+$ as a fixed coordinate but do not assume here that $v_0(x)$ is constant.

\subsection{The Levi--Civita connection}		\label{DWP connection}

We compute Christoffel symbols  inductively. In the first step, temporarily set $\bar g=\ds^2+g_1$.
We compute with respect to the vector field
\[
e_0=\frac{\partial}{\partial s} = v_0^{-1}\frac{\partial}{\partial x},
\]
and use indices $1\leq i,j,k\leq n_1$. We find that the connection of $\bar g$ is determined by
\begin{align*}
\Gamma_{00}^0	&=	\frac{v_{0,x}}{v_0}=v_{0,s},\\
\Gamma_{00}^k 	&= \Gamma_{i0}^0	=0,\\
\Gamma_{ij}^0	&=-\frac12\big(\Ds (v_1^2)\big)(\hat g_1)_{ij}, \qquad\qquad \Gamma_{0j}^k	=\frac12 u_1^{-1}\big(\Ds(v_1^2)\big)\delta_j^k,\\
\Gamma_{ij}^k	&= (\hat\Gamma_1)_{ij}^k.
\end{align*}

In the second step, we note that $g=\bar g+g_2$. We temporarily let $0\leq i,j,k\leq n_1$ and $n_1+1\leq\alpha,\beta,\gamma\leq n_1+n_2=n$.
We find that the connection of $g$ is given by
\begin{align*}
\Gamma_{00}^0&\mbox{ and }\Gamma_{ij}^k	\qquad\mbox{ as above},\\
\Gamma_{ij}^\gamma		&=\Gamma_{\alpha j}^k=0,\\
\Gamma_{\alpha\beta}^k	&=-\frac12 g_*^{k\ell} \big(\partial_\ell(v_2^2)\big))(\hat g_2)_{\alpha\beta},
\qquad\qquad \Gamma_{i\beta}^\gamma = \frac12 u_2^{-1}\big(\partial_i (v_2^2)\big)\delta_\beta^\gamma,\\
\Gamma_{\alpha\beta}^\gamma	&= (\hat\Gamma_2)_{\alpha\beta}^\gamma.
\end{align*}

In the final step, using $e_0=\ps$, $1\leq i,j,k\leq n_1$, and $n_1+1\leq\alpha,\beta,\gamma\leq n_1+n_2$, we combine the results above
to see that the only potentially nonzero symbols are
\begin{subequations}
\begin{align}
\Gamma_{00}^0 	&= v_{0,s},\\
\Gamma_{ij}^0	&= -\frac{v_{1,s}}{v_1}\,(g_1)_{ij},\qquad\qquad
\Gamma_{0j}^k	=\frac{v_{1,s}}{v_1}\,\delta_j^k,\\
\Gamma_{ij}^k	&= (\hat\Gamma_1)_{ij}^k,\\
\Gamma_{\alpha\beta}^0	&=-\frac{v_{2,s}}{v_2}\,(g_2)_{\alpha\beta},\qquad\qquad
\Gamma_{0\beta}^\gamma=\frac{v_{2,s}}{v_2}\,\delta_\beta^\gamma,\\
\Gamma_{\alpha\beta}^\gamma	&=(\hat\Gamma_2)_{\alpha\beta}^\gamma.
\end{align}
\end{subequations}

\subsection{The curvatures}	\label{DWP curvatures}

Let uppercase Roman indices lie in $\{0,1,\dots,n_1+n_2\}$, lowercase Romans in $\{1,\dots,n_1\}$, and
lowercase Greeks in $\{n_1+1,\dots,n_1+n_2\}$.

We recall the standard formula
$R_{ABC}^D = \pd_A\Gamma_{BC}^D-\pd_B\Gamma_{AC}^D+\Gamma_{AE}^D\Gamma_{BC}^E-\Gamma_{BE}^D\Gamma_{AC}^E$.
In general, multiply-warped products have four flavors of sectional curvature corresponding to the following pairings of tangent planes:
(a) base with base, (b) fiber with itself, (c) fiber with distinct fiber, and (d) base with fiber.
\medskip

Because our base is one-dimensional,
only the last three flavors are relevant here, all of which may be obtained from the following formulas:
\begin{align*}
R_{ijk\ell}	&=g_{\ell m}R_{ijk}^m = v_1^2 \hat R_{ijk\ell}-\frac{v_{1,s}^2}{v_1^2}\Big((g_1)_{i\ell}(g_1)_{jk}-(g_1)_{ik}(g_1)_{j\ell}\Big),\\
R_{\alpha\beta\gamma\delta}	&=g_{\delta\eta}R_{\alpha\beta\gamma}^\eta = v_2^2 \hat R_{\alpha\beta\gamma\delta}
-\frac{v_{2,s}^2}{v_2^2}\Big((g_2)_{\alpha\delta}(g_2)_{\beta\gamma}-(g_2)_{\alpha\gamma}(g_2)_{\beta\delta}\Big),\\
R_{i\beta\gamma k} &=g_{k\ell}R_{i\beta\gamma}^\ell
=g_{k\ell}\Big(\pd_i\Gamma_{\beta\gamma}^\ell-\pd_\beta\Gamma_{i\gamma}^\ell
+\Gamma_{iE}^\ell\Gamma_{\beta\gamma}^E-\Gamma_{\beta E}^\ell\Gamma_{i\gamma}^E\Big)
=g_{k\ell}\Gamma_{i0}^\ell\Gamma_{\beta\gamma}^0\\
&=-\frac{v_{1,s}}{v_1}\frac{v_{2,s}}{v_2}(g_1)_{ik}(g_2)_{\beta\gamma},\\
R_{i00\ell}	&=g_{k\ell}R_{i00}^k=g_{k\ell}\Big(-\pd_s\Gamma_{i0}^k-\Gamma_{0p}^k\Gamma_{i0}^p\Big)=-\frac{v_{1,ss}}{v_1}(g_1)_{i\ell},\\
R_{\alpha00\delta}	&=g_{\delta\eta}R_{\alpha00}^\eta
=g_{\delta\eta}\Big(-\pd_s\Gamma_{\alpha0}^\eta-\Gamma_{0\ve}^\eta\Gamma_{\alpha0}^\ve\Big)=-\frac{v_{2,ss}}{v_2}(g_2)_{\alpha\delta}.
\end{align*}
\medskip

Now let $a,b\in\{1,2\}$. Then it follows from the formulas above that the Ricci curvature is determined by
\begin{subequations}		\label{Ricci}
\begin{align}
R_{00}	&= -\left(\sum_a  n_a\frac{(v_a)_{ss}}{v_a}\right)\mr ds^2,\\
(\Rc_a)_{\alpha\beta}	&=
\left\{-\frac{(v_a)_{ss}}{v_a}+(n_a-1)\frac{1-(v_a)_s^2}{v_a^2}-\sum_{b\neq a}n_b\frac{(v_a)_s(v_b)_s}{v_a v_b}\right\}(g_a)_{\alpha\beta}.
\end{align}
\end{subequations}

The scalar curvature is
\begin{equation}
R=-2\sum_a n_a\frac{(v_a)_{ss}}{v_a}
+\sum_a n_a(n_a-1)\frac{1-(v_a)_s^2}{v_a^2}
-\sum_a\sum_{b\neq a} n_a n_b\frac{(v_a)_s(v_b)_s}{v_a v_b}.
\end{equation}

%

\section{Details of the $\Delta h$ computation}		\label{Tensor Hessian}

In this appendix, we continue the notational conventions of Appendix~\ref{Geometric data} and compute $\cv^2_{AB} h_{CD}$ for a
metric~\eqref{eq:DWP Ansatz}, where uppercase Roman indices again range over all possible values in $\{0,1,\dots,n_1+n_2\}$.

Without loss of generality, we may work in normal coordinates for $\hat g_1$ and $\hat g_2$.

We start with $\cv_B h_{CD}$ and find that the only nonvanishing components are of the form
\begin{align*}
\cv_0 h_{00}= \eta_{0,s},  \qquad \cv_0h_{ij}=\eta_{1,s}g_{ij}, \qquad \cv_0 h_{\alpha\beta}=\eta_{2,s}g_{\alpha\beta},\\
\cv_i h_{j0}=\frac{v_{1,s}}{v_1}(\eta_0-\eta_1)g_{ij}, \qquad \cv_\alpha h_{\beta0}=\frac{v_{2,s}}{v_2}(\eta_0-\eta_2)g_{\alpha\beta}.
\end{align*}

We proceed to $\cv_A\cv_B h_{CD}$, noting that the only components of interest occur for compatible choices of $A,B$ and $C,D$.
For these, we obtain
\begin{align*}
\cv_0\cv_0 h_{00}	&=\eta_{0,ss},\\
\cv_i\cv_j h_{00}	&=\Big\{\frac{v_{1,s}}{v_1}\eta_{0,s}-2\frac{v_{1,s}^2}{v_1^2}(\eta_0-\eta_1)\Big\}g_{ij},\\
\cv_\alpha\cv_\beta h_{00}	&=\Big\{\frac{v_{2,s}}{v_2}\eta_{0,s}-2\frac{v_{2,s}^2}{v_2^2}(\eta_0-\eta_2)\Big\}g_{\alpha\beta},\\
\cv_0\cv_0 h_{ij}	&=\eta_{1,ss}\,g_{ij},\\
\cv_0\cv_0 h_{\alpha\beta}	&=\eta_{2,ss}\,g_{\alpha\beta},\\
\cv_i\cv_j h_{k\ell}	&=\frac{v_{1,s}}{v_1}\eta_{1,s}\,g_{ij}g_{k\ell}-\frac{v_{1,s}^2}{v_1^2}(\eta_1-\eta_0)\,(g_{ik}g_{j\ell}+g_{i\ell}g_{jk}),\\
\cv_\alpha\cv_\beta h_{\gamma\delta}	&=\frac{v_{2,s}}{v_2}\eta_{2,s}\,g_{\alpha\beta}g_{\gamma\delta}-
\frac{v_{2,s}^2}{v_2^2}(\eta_2-\eta_0)\,(g_{\alpha\gamma}g_{\beta\delta}+g_{\alpha\delta}g_{\beta\gamma}),\\
\cv_i\cv_j h_{\alpha\beta}	&=\frac{v_{1,s}}{v_1}\eta_{2,s}\,g_{ij}g_{\alpha\beta},\\
\cv_\alpha\cv_\beta h_{ij}	&=\frac{v_{2,s}}{v_2}\eta_{1,s}\,g_{ij}g_{\alpha\beta}.
\end{align*}
Tracing these identities yields the  formulas stated in \S~\ref{Lichnerowicz}.

\section{Invariant manifolds} \label{sec:invariant manifold construction}
In this appendix, we sketch the proof of Theorem~\ref{UnstableManifoldTheorem} adapted to our application,
using notation we introduced in \S~\ref{sec:Irwin Chaperon}.

Let $E$  be a Banach space, $\mc U\subset E$ an open subset, and $f:\mc U\to E$ a $C^r$ map with a fixed point $p\in\mc U$. 
Assume that $T=\mr df_p$ satisfies the $a$-hyperbolicity condition for some $a>1$.  We thus have $E=E^s\oplus E^u$, and 
\[
T = T^s\oplus T^u, \quad\mbox{with}\quad
T^s:E^s\to E^s \quad\mbox{and}\quad
T^u:E^u\to E^u.
\]
Assuming without loss of generality that $p=0_E$ and hence that $f(0_E)=0_E$, we expand $f$ in a Taylor series,
\[
f(x) = T x + g(x),
\]
where $g(0_E)=0_E$ and $\mr dg_0 = 0$.
The key step in the proof of Theorem~\ref{UnstableManifoldTheorem} is then to establish the following result:

\begin{lemma}
For every $v\in E^u$, there exists a unique ancient orbit $(x_i)_{i\leq 0}$ in $\mc U$ such that $\pi^u (x_0) = v$
and such that $\|x_i\|\leq Ca^i$ for some $C>0$ and all $i\leq 0$.
\end{lemma}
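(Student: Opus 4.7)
The plan is to recast the existence of an ancient orbit as a fixed-point problem on a Banach space of exponentially decaying sequences and to solve it by the contraction mapping principle. Writing $f(x)=Tx+g(x)$ with $g(0)=0$ and $\mr dg_0=0$, and decomposing $x = x^s + x^u$ along $E=E^s\oplus E^u$, the orbit recursion $x_i = Tx_{i-1}+g(x_{i-1})$ splits into $x_i^s = T^s x_{i-1}^s + g^s(x_{i-1})$ and $x_i^u = T^u x_{i-1}^u + g^u(x_{i-1})$. The constraint $\pi^u(x_0)=v$ together with the decay requirement suggests summing each recursion in the direction that preserves decay as $i\to-\infty$, which leads me to introduce the weighted Banach space
\[
\mc X_a = \Bigl\{ x = (x_i)_{i\leq 0}\subset E : \|x\|_a \stackrel{\rm def}= \sup_{i\leq 0} a^{-i}\|x_i\| < \infty \Bigr\}
\]
and the operator $\Psi_v:\mc X_a\to\mc X_a$ defined by
\begin{align*}
\Psi_v(x)_i^s &= \sum_{k=-\infty}^{i-1}(T^s)^{i-1-k}\,g^s(x_k), \\
\Psi_v(x)_i^u &= (T^u)^i v \;-\; \sum_{k=i}^{-1}(T^u)^{i-1-k}\,g^u(x_k).
\end{align*}
A one-line telescoping check verifies that fixed points $\Psi_v(x)=x$ are exactly the desired ancient orbits satisfying $\pi^u(x_0)=v$.

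Next I would show that $\Psi_v$ is a contraction on a small ball in $\mc X_a$. Pick $b,c$ with $\rho(T^s)<b<a<c<\min_{\lambda\in\Sigma_a^u}|\lambda|$; after passing to an adapted norm one may assume $\|(T^s)^k\|\leq b^k$ and $\|(T^u)^{-k}\|\leq c^{-k}$ for $k\geq 0$, which is a quantitative form of~\eqref{eq:stable and unstable expansion rates}.  Because $\mr dg_0=0$, for any $\varepsilon>0$ we can shrink $\mc U$ so that $\mr{Lip}(g)\leq\varepsilon$.  The two geometric sums
\[
\sum_{k=-\infty}^{i-1}b^{i-1-k}a^k = \frac{a^i}{a-b},\qquad \sum_{k=i}^{-1}c^{i-1-k}a^k \leq \frac{a^i}{c-a},
\]
together with $\|(T^u)^i v\|\leq c^i\|v\|\leq a^i\|v\|$ for $i\leq 0$, yield
\[
\|\Psi_v(x)\|_a \;\leq\; \|v\| + \varepsilon\Bigl(\tfrac{1}{a-b}+\tfrac{1}{c-a}\Bigr)\|x\|_a,
\]
and the identical bound holds with $\Psi_v(x)-\Psi_v(y)$ in place of $\Psi_v(x)$ and $\|x-y\|_a$ in place of $\|x\|_a$.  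Choosing $\varepsilon$ so that the prefactor is at most $\tfrac{1}{2}$, and then requiring $\|v\|$ small, $\Psi_v$ maps a closed ball $\overline B_R\subset\mc X_a$ into itself as a contraction.  The Banach fixed point theorem then furnishes a unique fixed point $x(v)\in\overline B_R$, which is the desired ancient orbit.

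The main obstacle is the asymmetry between the stable and unstable components.  Since $f$ is not assumed to be a local diffeomorphism, $T^s$ need not be invertible, so the stable-part recursion can only be propagated forward in time; one must therefore express $x_i^s$ as an accumulated sum of past nonlinear corrections and rely on the spectral gap $b<a$ to extract decay at the precise weight $a^i$.  The unstable-part recursion, in contrast, is propagated backward from the prescribed boundary value $x_0^u=v$, which uses invertibility of $T^u$ (automatic from $\sigma(T^u)\subset\{|z|>a>0\}$) and the symmetric gap $a<c$.  Arranging both summations so that they produce estimates at the single weight $a^i$ is the one calculation that requires care; once that is done, the existence and uniqueness assertions follow immediately from the contraction property of $\Psi_v$.
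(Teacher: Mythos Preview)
Your argument is correct and sets up the problem exactly as the paper does: the same weighted sequence space (your $\mc X_a$ is the paper's $\ell^\infty_a(E)$) and the same summed formulas for the stable and unstable components of the orbit. The only substantive difference is the endgame: the paper packages the two equations as $(\boldsymbol{1}+\Upsilon)(x) = (0 \oplus T^{-i}v)$, observes that $\mr d\Upsilon_0 = 0$, and invokes the Implicit Function Theorem, while you apply the Banach fixed-point theorem directly to $\Psi_v$. Both yield the existence and uniqueness asserted in the lemma. The IFT route has one advantage for the surrounding unstable manifold theorem: it immediately transfers $C^r$ regularity of $f$ to $C^r$ dependence of the solution on $v$, which is what makes the local unstable manifold $C^r$ (or real analytic). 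Your contraction argument is slightly more elementary but would require an additional step (\emph{e.g.}, the uniform contraction or fiber contraction principle) to recover that smooth dependence on $v$.
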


In this Lemma the map $f$ is not assumed to be a local diffeomorphism or even injective.  The only assumptions are that $f$ be $C^1$ in
the sense of Fréchet derivatives, and that the spectrum $df_p$ admit a splitting as described above.

\begin{proof}[Sketch of proof]
If $\varepsilon>0$ is sufficiently small, then the spectrum $\sigma(T)$ does not intersect the annulus
\[
\{\lambda\in\C : a-\varepsilon \leq |\lambda| \leq a+\varepsilon\}.
\]
We may replace the norm on $E$ by an equivalent norm in which
\[
\forall v\in E^s: \|T v\| \leq (a-\varepsilon)\|v\|	\qquad\mbox{and}\qquad
\forall v\in E^u: \|T v\| \geq (a+\varepsilon)\|v\|. 
\]

Split $x_i = x^s_i\oplus x^u_i$.  Then the equation $x_i=f(x_{i-1})$ is equivalent to the pair
\begin{equation}
\label{eq:recurrence stable and unstable components}
x^s_i = T  x^s_{i-1} + g^s(x_{i-1}) \qquad\mbox{and}\qquad
x^u_i = T  x^u_{i-1} + g^u(x_{i-1}) .
\end{equation}
The first recurrence equation implies that
\[
x^s_i = T^N x^s_{i-N} + g^s(x^s_{i-1})+ Tg^s(x^s_{i-2})+ \cdots + T^{N-1}g^s(x^s_{i-N}).
\]
If we only consider sequences $x_i$ for which $\|x_i\|\leq Ca^i$ for some constant $C>0$, then their stable components also decay
like $a^i$, and thus we have
\[
\|T^Nx^s_{i-N}\| \leq (a-\varepsilon)^N C a^{i-N} = Ca^i \Bigl(1-\frac{\varepsilon}{a}\Bigr)^N \to 0 \quad\mbox{as}\quad N\to\infty.
\]
The first equation in \eqref{eq:recurrence stable and unstable components} is therefore equivalent to
\begin{equation}        \label{eq:convolution stable component}
x^s_i = \sum_{j=0}^\infty  T^j  g^s(x^s_{i-j-1}).
\end{equation}

We can rewrite the second recurrence equation in \eqref{eq:recurrence stable and unstable components} in a similar manner if we
use the fact that that $T|E^u$ is invertible --- this holds because $\sigma(T)$ lies outside the circle with radius $a+\varepsilon$,
which implies that $0\not\in\sigma(T)$.  Thus we have
\[
x^u_i = T^{-1} x^u_{i+1} - T^{-1}g^u(x_{i+1}).
\]
Iterating this recurrence $i$ times, we find that for any $i\leq 0$,
\begin{equation}
\label{eq:convolution unstable component}
x^u_i = T^{-i} x^u_0 - \sum_{j=1}^i T^{-j}g^u(x_{i+j}).
\end{equation}

We are looking for orbits $x_i$ for which $\pi^u x_0 = x^u_0 = v$.  The set of orbits we are interested in is therefore determined by the pair of equations
\begin{equation} \label{eq:unstable mfd prepped for IFThm}
\begin{aligned}
x^s_i -\sum_{j=0}^\infty  T^j  g^s(x^s_{i-j-1})  &= 0,\\
x^u_i + \sum_{j=1}^i T^{-j}g^u(x_{i+j})&=  T^{-i} v.
\end{aligned}
\end{equation}
Because of the decay condition that we impose on the orbits, we are looking for solutions $(x_i)_{i\leq 0}$ to these equations in the sequence space
\[
\ell^\infty_a(E) \stackrel{\rm def}=
\bigl\{  (x_i)_{i\leq 0} : \|(x_i)\|_a <\infty
\bigr\}, \quad\mbox{where}\quad
\|(x_i)\|_a \stackrel{\rm def}= \sup_{i\leq 0} a^{-i}\|x_i\|.
\]

To complete the proof, one verifies that the map $\Upsilon : (x_i)_{i\leq 0} \mapsto (y_i)_{i\leq 0}$ given by  
\[
y_i =  \left(-\sum_{j=0}^\infty T^j  g^s(x^s_{i-j-1})\right)  \oplus \left(\sum_{j=1}^i T^{-j}g^u(x_{i+j})\right)
\]
is a continuously Fréchet differentiable map $\Upsilon\colon\ell^\infty_a(E)\to\ell^\infty_a(E)$ with derivative $\mr d \Upsilon_0 = 0$.
One can then apply the Implicit Function Theorem to the equations~\eqref{eq:unstable mfd prepped for IFThm} to conclude that a
unique solution $(x_i)_{i\leq 0}\in \ell^\infty_a(E) $ exists for any small prescribed $v\in E^u$.

\end{proof}

\section{Bounded oscillations}		\label{OscillationBound}

In this Appendix we record an observation, which we do not use in this paper but which might be of independent interest.

We consider a doubly warped product $\mfd = \mb R^{n_1+1} \times \mc S^{n_2}$ evolving by the nonlinear
Ricci--DeTurck flow.  We specialize to Ricci--DeTurck flow of $\mf G = \mr{O}(n_1+1)\times\mr{O}(n_2+1)$-invariant metrics.
Accordingly, we proceed to consider the evolution of
\begin{equation} \label{G-invariant metric}
\gh = \mr ds^2+g_1+g_2 \stackrel{\rm def} = v_0^2(\dx^2) + v_1^2\,\hat g_1+v_2^2\,\hat g_2
\end{equation}
by Ricci--DeTruck flow~\eqref{eq:RDT}.  As background metric, we choose one of the Ricci-flat Böhm metrics we considered in \cite{GAFA},
which we write in this appendix as
\begin{equation}\label{eq:bohm}
g_{\mr B} = (\mr dx)^2 + V_1(x)^2\hat g_1 + V_2(x)^2 \hat g_2(x).
\end{equation}

Our objective here is to prove:

\begin{lemma}	\label{Winding}
If $\big(\mfd=\mb R^{1+n_1}\times\mc S^{n_2},\tilde g = v_0^2\,\dx^2 + v_1^2\,\hat g_1+v_2^2\,\hat g_2\big)$ is a doubly warped product
manifold evolving by Ricci--DeTurck flow~\eqref{eq:RDT}, then the number of $x>0$ for which
\[
  \frac{v_1(x, t)}{\sqrt{n_1-1}} = \frac{v_2(x, t)}{\sqrt{n_2-1}}
\]
does not increase in time.
\end{lemma}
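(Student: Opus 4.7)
The plan is to apply Angenent's Sturmian theorem for scalar one-dimensional parabolic equations, which asserts that for smooth coefficients the number of zeros of $x\mapsto u(x,t)$ of a solution to $u_t=A(x,t)u_{xx}+B(x,t)u_x+C(x,t)u$ is non-increasing in time. With the change of variable $w_a=\log(v_a/\sqrt{n_a-1})$ used in \S\ref{sec: the linearized system}, the intersection condition $v_1/\sqrt{n_1-1}=v_2/\sqrt{n_2-1}$ becomes $W\equiv w_1-w_2=0$, so the lemma reduces to the claim that the number of zeros of $W(\cdot,t)$ does not increase along the flow. The objective is therefore to show that $W$ satisfies a scalar linear parabolic PDE with coefficients depending smoothly on the full solution $\tilde g(x,t)$.

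To derive this equation, I would specialize the Ricci--DeTurck system \eqref{eq:RDT} to $\mathfrak G$-invariant doubly-warped metrics as in \eqref{G-invariant metric}, using the Böhm background \eqref{eq:bohm}. By symmetry, the DeTurck vector field reduces to a single scalar $X=X^0(x,t)\,\partial_x$, and the system decouples into three coupled parabolic equations for $v_0,v_1,v_2$ --- equivalently, for $v_0,w_1,w_2$. The curvature formulas in Appendix~\ref{Geometric data} show that the quasilinear parts of the $w_1$- and $w_2$-equations are of the form $v_0^{-2}w_{a,xx}+\text{(polynomial in }w_{1,x},w_{2,x},v_0,v_{0,x}\text{)}\cdot w_{a,x}+\cdots$, and the only non-polynomial terms are $e^{-2w_a}$ coming from the $(n_a-1)/v_a^2$ curvature contribution. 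Subtracting the two equations and applying Hadamard's lemma to the difference
\[
e^{-2w_1}-e^{-2w_2}=-2W\int_0^1 e^{-2(sw_1+(1-s)w_2)}\,ds,
\]
together with the analogous factoring of every polynomial combination of $w_{1,(\cdot)}$ and $w_{2,(\cdot)}$ that is anti-symmetric in $(w_1,w_2)$, expresses $\partial_t W$ as a linear second-order parabolic operator applied to $W$, with $A=v_0^{-2}>0$ and with coefficients that are smooth functions of $(x,t)$ through the (regular) solution.

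The main obstacle splits into two parts. Algebraically, one must verify that \emph{every} cross-term coupling $w_1$ and $w_2$ in the Ricci--DeTurck system is either symmetric under $(w_1,w_2)$-interchange (so it drops out of $\partial_t W$) or can be written, via Hadamard's lemma, as a smooth coefficient times $W$, $W_x$ or $W_{xx}$; this is plausible from the symmetric roles of the two factors in $\mathfrak G=\mathrm O(n_1+1)\times\mathrm O(n_2+1)$, but requires careful bookkeeping of the DeTurck vector-field contributions. Analytically, the domain $[0,\infty)$ is unbounded, so one must ensure that zeros of $W$ do not escape through either endpoint. At $x=0$, smoothness of the ambient metric on $\mathbb R^{n_1+1}\times\mathcal S^{n_2}$ forces $v_1(0,t)=0$ and $v_2(0,t)>0$ (see Lemma~\ref{lemma:boundary condition at zero}), so $W(0,t)=-\infty$ and zeros stay bounded away from the origin. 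At $x=\infty$, the bounded-geometry and asymptotic-flatness assumptions propagated along the flow force $w_a(x,t)-\log(x/\sqrt{n-1})\to 0$, so $W(x,t)\to 0$ at infinity. The standard device of truncating to $[\epsilon,R]$, choosing times at which $W(R,t)\neq 0$, applying the Sturmian theorem on the truncation, and then letting $\epsilon\searrow 0$ and $R\nearrow\infty$, then yields the desired monotonicity of the zero-count.
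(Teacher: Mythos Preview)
Your approach is correct and matches the paper's: introduce $W=w_1-w_2$, show it satisfies a scalar linear parabolic equation, and invoke the Sturmian theorem. The boundary analysis at $x=0$ is also the same.

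One simplification you are missing: the ``careful bookkeeping'' you anticipate is unnecessary. When the full Ricci--DeTurck system is written out explicitly in the variables $w_0=\log v_0$ and $w_a=\log(v_a/\sqrt{n_a-1})$, the equations for $w_1$ and $w_2$ turn out to be \emph{identical in form}:
\[
\partial_t w_a = w_{a,ss} + \Bigl(f+\sum_b n_b w_{b,s}\Bigr)w_{a,s} - e^{-2w_a},\qquad a=1,2,
\]
where $f$ is the DeTurck drift. Because the first-order coefficient $f+\sum_b n_b w_{b,s}$ is the \emph{same} for both $a$, subtraction is immediate: the only term that does not cancel outright is $-e^{-2w_a}$, giving the single zeroth-order coefficient $c=(e^{-2w_2}-e^{-2w_1})/(w_1-w_2)$ via Hadamard. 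No other Hadamard factoring or symmetry bookkeeping is needed. So the algebraic obstacle you flag dissolves once the system is written in these variables.

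On the analytic side, your truncation argument at $x=\infty$ is more than the paper actually does (it simply cites the Sturmian theorem and checks the coefficient regularity hypotheses). Note, however, that your claim ``$W(x,t)\to 0$ at infinity'' would, if true, allow zeros to accumulate there, so the truncation step as stated is delicate; you would want instead a uniform control showing no new zeros enter from large $x$ on the time interval under consideration.
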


\subsection{Regularity of the flow near a Böhm soliton}

Our short-time existence theorem provides solutions $\tilde g(t, \cdot) = g_{\mr B} + h$ where $\epsilon g_{\mr B} < \tilde g < \epsilon^{-1} g_{\mr B}$
for some $\epsilon>0$.  Metrics that start out as doubly warped products remain doubly warped products.  We will therefore consider solutions where $h$
has  the form
\[
  h = h_0(x, t)(\mr dx)^2 + h_1(x, t)\hat g_1 + h_2(x, t)\hat g_2,
\]
with
\[
h_0 = v_0^2- 1, \qquad
h_1 = v_1^2- V_1^2, \qquad
h_2 = v_2^2- V_2^2.
\]

\subsection{The DeTurck vector field}

As in Appendix~\ref{Geometric data}, we let uppercase Roman indices lie in $\{0,1,\dots,n_1+n_2\}$, lowercase Romans in $\{1,\dots,n_1\}$,
and lowercase Greeks in $\{n_1+1,\dots,n_1+n_2\}$. Here writing $\tilde g^{AB} = (\tilde g^{-1})^{AB}$, we apply~\eqref{eqn:slick X} to see
that the DeTurck vector field $X$ has components
\begin{align*}
X^A	&=\tilde g^{BC}(\tilde\Gamma_{BC}^A-\Gamma_{BC}^A)\\
&=\tilde g^{00}(\tilde\Gamma^A_{00}-\Gamma^A_{00})+\tilde g^{ij}(\tilde\Gamma^A_{ij}-\Gamma^A_{ij})
+\tilde g^{\alpha\beta}(\tilde\Gamma^A_{\alpha\beta}-\Gamma^A_{\alpha\beta}).
\end{align*}
By the formulas in Appendix~\ref{DWP connection}, this vanishes unless $A=0$, whereupon we obtain $X=f(s)\,\frac{\partial}{\partial s}$,
where
\begin{equation}	\label{DWP DeTurck}
f(s) 	= v_{0,s}-n_1\frac{v_{1,s}}{v_1}-n_2 \frac{v_{2,s}}{v_2}
	= \frac{v_{0,x}}{v_0}-n_1\frac{v_{1,s}}{v_1}-n_2 \frac{v_{2,s}}{v_2}.
\end{equation}

\subsection{A simplification of the system}

In formula~\eqref{Ricci} of Appendix~\ref{DWP curvatures}, we show that
\begin{align*}
-\widetilde\Rc=& \left\{\sum_a n_a \frac{v_{a,ss}}{v_a}\right\}\;v_0^2\, (\dx)^2\\
&+\sum_a\left\{\Bigl(\frac{v_{a,s}}{v_a}\Bigr)_s   - \frac{n_a-1}{v_a^2}+ \frac{v_{a,s}}{v_a}\sum_b n_b\frac{v_{b, s}}{v_b}\right\} v_a^2 \, \hat g_a,
\end{align*}
where $a,b\in\{1,2\}$.  As shown above, the Lie derivative we need is
\[
\mc L_{X} \tilde g = 2f_s(\mr d s)^2+2\Big(\sum_a f\frac{v_{a,s}}{v_a}\Big)\, v_a^2\,\hat g_a.
\]

It follows that Ricci--DeTurck flow~\eqref{eq:RDT} is equivalent to the strictly parabolic system of equations
\begin{align*}
v_0 \frac{\pd v_0}{\pd t}
&= \Bigl\{ \sum_a n_a\frac{v_{a,ss}}{v_a}+f_s\Bigr\}\,v_0^2,\\
v_a  \frac{\pd v_a}{\pd t}
&=  \left\{\Bigl(\frac{v_{a,s}}{v_a}\Bigr)_s   - \frac{n_a-1}{v_a^2}+ \frac{v_{a,s}}{v_a}\left(f+\sum_b n_b \frac{v_{b, s}}{v_b}\right)\right\}v_a^2,
\end{align*}  
which in turn are the same as
\begin{align*}
\frac{\pd v_0}{\pd t}	&= \Bigl\{f_s + \sum_a n_a\frac{v_{a,ss}}{v_a}\Bigr\}\,v_0,\\
\frac{\pd v_a}{\pd t}		&=  \left\{\Bigl(\frac{v_{a,s}}{v_a}\Bigr)_s   - \frac{n_a-1}{v_a^2}+ \frac{v_{a,s}}{v_a}\left(f+\sum_b n_b\frac{v_{b, s}}{v_b}\right)\right\}v_a.
\end{align*}
\medskip

This system becomes even simpler if one writes it in terms of
\[
w_0\stackrel{\rm def}	= \log (v_0)\quad\mbox{and the previously defined}\quad
w_a	= \log \Big(\frac{v_a}{\sqrt{n_a-1}}\Big),\quad(a=1,2).
\]
Then~\eqref{DWP DeTurck} implies that $f=w_{0,x}-n_1w_{1,s}-n_2w_{2,s}$, and one obtains
\begin{subequations}		\label{eqn:slick system}
\begin{align}
\frac{\pd w_0}{\pd t}	&= \bigl(f+\sum_a n_a w_{a,s}\bigr)_s + \sum_a n_a w_{a,s}^2,  \\
\frac{\pd w_a}{\pd t}	&= w_{a, ss} + \bigl(f+\sum_b n_b w_{b,s}\bigr) w_{a,s} -  e^{-2w_a},
\end{align}
\end{subequations}
in which both $w_a$ and $w_b$ satisfy identical equations.
\medskip

We note that the difference $w_{12} \stackrel{\rm def}= w_1-w_2$ satisfies a linear parabolic equation,
\[
\frac{\pd}{\pd t} w_{12}
= a(x,t)\,w_{12, xx} + b(x,t)\, w_{12,x} + c(x,t)\,w_{12}
\]
where the relation $s=v_0\,x$ implies that
\[
a(x,t) = v_0^{-2},\quad
b(x,t) = v_0^{-1}\Big\{f+\sum_a n_a w_{a,s}-v_0^{-2}v_{0,s}\Big\}, \quad
c(x,t) = \frac{e^{-2w_2} - e^{-2w_1}}{w_1-w_2}.
\]

\begin{proof}[Proof of Lemma~\ref{Winding}]

We wish to apply the Sturmian theorem~\cite{Sturmian}. We note that $v_0>0$ is a necessary condition for the solution to exist.
Therefore, on any time interval $[0,T]$ on which the solution remains smooth, one verifies from~\eqref{eqn:slick system} that
$a,a_x,a_{xx},a_t,a^{-1}\in L_\infty$ and that $b,b_x,b_t\in L_\infty$. Then writing
\[
c=e^{-2w_2}\Big(\frac{1-e^{-2w_{12}}}{w_{12}}\Big),
\]
one readily sees that $c\in L_\infty$ as well. The necessary conditions for the metric to close smoothly at the boundary $s=0$ imply that
$w_1\to-\infty$ as $s\searrow0$, while $w_2$ remains bounded. This implies that no zeroes of $w_{12}$ come in from the boundary.
The result then follows directly from~\cite{Sturmian}.
\end{proof}

\section{Function spaces}			\label{sec:function spaces}

In this appendix, we review properties of little-Hölder spaces on Riemannian manifolds of bounded geometry.  For details, we refer
to Aubin's book~\cite{TAubin98}, as well as papers by Hans Triebel~\cite{Triebel86,Triebel87}.\footnote{Note that Triebel treats the
  Besov spaces $B^s_{pq}(\mfd )$ of which the Hölder spaces are a special case: $h^{k,\alpha} = B^{k+\alpha}_{\infty\infty}$,
  provided that $\alpha\in(0, 1)$ and $k\in\Z$.}

\subsection{A covering lemma}\label{lem:covering}

\begin{definition}
The \emph{dimension} of a covering $\ball_a$ ($a\in\N$) is the smallest $N\in\N$ such that $\ball_{a_0}\cap \ball_{a_1}\cap \cdots \cap \ball_{a_N}=\varnothing$ for all $a_0<a_1<\cdots <a_N\in\N$.  
\end{definition}

\begin{lemma}
Suppose $(\mfd , g)$ has bounded geometry.  Then there are $N\in\N$ and $r_0>0$ such that for every $r\in(0, r_0)$, a sequence
$\mf p_a\in\mfd $ exists for which $\{\ball_a:=\ball(\mf p_a, r)\}_{a\in\N}$ is an open cover of $\mfd $ of dimension at most $N$.
\end{lemma}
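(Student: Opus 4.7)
The plan is to construct the covering by a standard maximal-packing (Vitali-type) argument and then control its dimension using volume comparison, which holds because bounded geometry gives us both an upper bound on $|\Rm|$ (hence a lower bound on $\Rc$) and a positive lower bound on the injectivity radius.

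First, I would fix $r_0 = \tfrac{1}{3}\mr{inj}(g)$, so that for any $r\in(0,r_0)$ and any point $\mf p\in\mfd$, the exponential map $\exp_{\mf p}$ is a diffeomorphism on $\ball_{3r}(0)\subset T_{\mf p}\mfd$. Then I would apply Zorn's lemma (or a straightforward inductive construction) to select a maximal sequence of points $\{\mf p_a\}_{a\in\N}\subset\mfd$ with the property that the balls $\ball(\mf p_a,r/2)$ are pairwise disjoint. Maximality forces $\{\ball_a=\ball(\mf p_a,r)\}_{a\in\N}$ to cover $\mfd$: if some $x\in\mfd$ lay outside every $\ball_a$, then $\ball(x,r/2)$ would be disjoint from every $\ball(\mf p_a,r/2)$, contradicting maximality.

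To control the dimension of the covering, suppose that $x\in\ball_{a_0}\cap\cdots\cap\ball_{a_k}$ for distinct indices $a_0<\cdots<a_k$. Then $d(\mf p_{a_i},x)<r$ for each $i$, so all the disjoint balls $\ball(\mf p_{a_i},r/2)$ are contained in $\ball(x,3r/2)$. Writing $V_+(\rho)$ for a uniform upper bound on the volume of any geodesic ball of radius $\rho\leq 3r_0$ on $\mfd$, and $V_-(\rho)$ for a uniform lower bound, we obtain
\[
(k+1)V_-(r/2) \leq \sum_{i=0}^k \mathrm{vol}\bigl(\ball(\mf p_{a_i},r/2)\bigr) \leq V_+(3r/2).
\]
The bounds $V_\pm$ exist because of bounded geometry: the Bishop--Gromov comparison theorem (applied with the uniform lower Ricci bound) gives an upper bound on volumes of balls, while the lower bound on injectivity radius together with the uniform upper curvature bound yields a lower bound on volumes of balls of radius at most $r_0$. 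Consequently, $k+1\leq V_+(3r/2)/V_-(r/2)$, and the right-hand side is a constant $N$ that does not depend on $r\in(0,r_0)$ once one checks the scaling: indeed, for small $r$ the volume ratio tends to $(3)^n$, independent of $r$, which in particular gives a uniform bound.

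The main technical point to verify carefully is that the constant $N$ can be chosen uniformly in $r\in(0,r_0)$, not just for a single fixed $r$. This follows from the small-scale volume comparison that reduces to comparison with the Euclidean ball volume ratio $(3r/2)^n/(r/2)^n=3^n$ as $r\to 0$, together with the observation that the ratio $V_+(3r/2)/V_-(r/2)$ is continuous and bounded on the compact interval $[0,r_0]$ of admissible radii. Once $N$ is fixed in this way, the lemma is proved.
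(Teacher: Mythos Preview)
Your proof is correct and follows essentially the same approach as the paper: a maximal packing of disjoint small balls whose doubles cover $\mfd$, followed by a volume comparison to bound the covering dimension. The only differences are cosmetic (you use disjoint balls of radius $r/2$ and a containing ball of radius $3r/2$ centered at the intersection point, whereas the paper uses radius $r/5$ and a ball of radius $3r$ centered at one of the $\mf p_{a_i}$).
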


This result appears in \cite{Triebel86,Triebel87}.
A proof is indicated in \cite[Lemma 2.26]{TAubin98}. The reader may also wish to consult~\cite{Hebey}.

The idea behind the proof is to use the Vitali covering lemma to find a sequence $\{\mf p_a\}_{a\in\N}$ such that $\mfd = \bigcup_a \ball(\mf p_a, r)$, while
the smaller balls $\ball(\mf p_{a}, \frac15 r)$ are pairwise disjoint.  The curvature bounds on the metric imply that for small enough $r>0$
one has
\[
\mathrm{vol}\bigl(\ball(\mf p, \tfrac15 r)\bigr) \geq cr^n  \qquad\mbox{ and }\qquad \mathrm{vol}\bigl(\ball(\mf p, 3 r)\bigr) \leq Cr^n
\]
for every $p\in\mfd $, where $c, C>0$ do not depend on $r$ or $p$.

If
\[
\ball(\mf p_{a_0}, r)\cap \cdots \cap \ball(\mf p_{a_N}, r) \neq \varnothing,
\]
then $\mf p_{a_1}, \dots, \mf p_{a_N}\in \ball(\mf p_{a_0}, 2r)$, and thus the disjoint balls $\ball(\mf p_{a_0}, \frac15 r)$, \dots,
$\ball(\mf p_{a_N}, \frac15 r)$ are subsets of $\ball(\mf p_{a_0}, 3r)$.

Comparing the volumes of $\cup_{i=0}^N \ball(\mf p_{a_i}, \frac15 r)$ and $\ball(\mf p_{a_0}, 3r)$ we find that 
\[
(N+1) cr^n \leq Cr^n.
\]
This is a contradiction if $N \geq C/c$, so that the intersection of more than $C/c+1$ balls $\ball(\mf p_a, r)$ is always empty.

\subsection{Partition of unity adapted to a covering}\label{sec:partition of unity}
Let $\ball_a:= \ball(\mf p_a, r)$ be an open cover of a manifold $(\mfd , g)$ of bounded geometry, as in Lemma~\ref{lem:covering}.  Then
there exists a set $\{\varphi_a\in C_c^\infty(\mfd )\colon a\in\mr N\}$ with $\mathop{\mathrm{supp}}\varphi_a= \ball_a$ such that
\[
\sum_a \varphi_a(x)^2 = 1
\]
for all $x\in\mfd $.

Furthermore, we can choose the $\varphi_a$ so that
\begin{equation}\label{eq:partition-gradients}
r|\nabla\varphi_a|+r^2|\nabla\nabla\varphi_a| + r^3|\nabla\nabla\nabla\varphi_a|\leq C
\end{equation}
for all small $r>0$, and for some constant $C$ that does not depend on $r$.

Indeed, choose $\psi\in C^\infty(\R)$ with $\psi(t) = 0$ for $t\geq 1$ and $\psi(t)>0$ for $t<1$.  Then let
\[
\phi_a(x) = \psi\bigl({d(x, \mf p_a)^2}/{r^2}\bigr) \text{ for }x\in B, \qquad \phi_a(x) = 0 \text{ elsewhere,}
\]
and define
\[
\varphi_a(x) = \frac{\phi_a(x)}{\sqrt{\sum_b \phi_b(x)^2}}.
\]

We can estimate the gradient of $\phi_a$ using
\[
|\nabla d(x, p)^2| = 2d(x, p),
\]
which implies that
\[
\nabla\phi_a = \frac{\psi'(d^2/r^2)}{r^2} \nabla(d^2) \quad\implies\quad |\nabla\phi_a| = \frac{2d}{r^2}|\psi'(d^2/r^2)| \le \frac Cr.
\]
For the Hessian, we have
\[
\nabla\nabla\phi_a =\frac{\psi''(d^2/r^2)}{r^4} \nabla(d^2)\otimes \nabla(d^2) + \frac{\psi'(d^2/r^2)}{r^2} \nabla\nabla(d^2)
\quad\implies\quad |\nabla\nabla\phi_a|\le \frac{C}{r^2}.
\]
Reasoning similarly, one sees that
\[
|\nabla^3 \phi_a|\le \frac{C}{r^3}.
\]

\subsection{Little-Hölder spaces}\label{About little Holder}
If $(K, d)$ is a compact metric space, one says that a function $f:K\to \R$ is
\emph{Hölder continuous of exponent $\alpha$} if
\[
\sup_{x\neq y} \frac{|f(x)-f(y)|}{d(x,y)^\alpha}<\infty.
\]
The set of $\alpha$-Hölder continuous functions on $K$ becomes a Banach space if one gives it the norm
\[
\|f\|_{0,\alpha} := \sup_x |f(x)| + \sup_{x\neq y} \frac{|f(x)-f(y)|}{d(x,y)^\alpha}.
\]

Recall that smooth functions are \emph{not dense} in $C^{0,\alpha}$. For example, one can verify that the distance in
$C^{0,1/2}([-1,1])$ between $f(x)=\sqrt{|x|}$ and any Lipschitz function $g:[-1,1]\to\R$ is at least $1$.

A function $f:K\to\R$ is \emph{little-Hölder continuous} if
\[
\lim_{\epsilon\searrow 0}\epsilon^{-\alpha} \sup\bigl\{|f(x)-f(y)| : d(x, y)<\epsilon\bigr\} =0.
\]
We write $h^{0,\alpha}(K)$ for the set of little-Hölder continuous functions on $K$. $h^{0,\alpha}(K)$ is a closed
subspace of $C^{0,\alpha}(K)$.  If $K=\ball(0, R)\subset\R^n$, then it is the closure of $C^\infty(K)$ in
$C^{0,\alpha}(K)$.

\subsection{Little-Hölder sections of a tensor bundle}
Let $(\mfd , g)$ be a manifold with bounded geometry, and let $T^{(p,q)}\mfd \to\mfd $ be the bundle of $(p,q)$ tensors
on $\mfd $. For simplicity of notation we describe the case of $(0,2)$-tensors (such as the metric and Ricci tensor).

Consider a covering $\ball_a:= \ball(\mf p_a, r)$ as in Lemma~\ref{lem:covering}, and parameterize each $\ball_a$ by the exponential map at
$\mf p_a$, \emph{i.e.,} by $\psi_a:\mc B\to {\ball(\mf p_a, r)}$ given by $\psi_a(\xi) = \exp\mf p_{p_a}(r\xi)$.  We say that a
$(0,2)$-tensor $T$ is \emph{Hölder continuous of exponent $\alpha$} if its components $T_{jk}$ (defined by
$T(\xi)=T_{jk}(\xi)\,\mr d \xi^j\,\mr d\xi^k$) are Hölder continuous functions in all of the coordinate patches $\ball_a$, and if
\[
\|T\|_{0,\alpha} \stackrel{\rm def}= \sup_a\max_{j,k} \|T_{jk}(\xi)\|_{h^{0,\alpha}(\ball_a)} <\infty.
\]
The quantity $\|T\|_{0,\alpha}$ is the $\alpha$-Hölder norm.
Changing the cover $\{\ball_a\}$ leads to a different but equivalent norm, provided that $r>0$ is small enough.

\bibliographystyle{plain} \bibliography{refs.bib}

\begin{thebibliography}{10}

\bibitem{MR3552794}
Bernd Ammann and Nadine Gro{\ss}e.
\newblock {$L^p$}-spectrum of the {D}irac operator on products with hyperbolic
  spaces.
\newblock {\em Calc. Var. Partial Differential Equations}, 55(5):Art. 127, 36,
  2016.

\bibitem{Sturmian}
Sigurd Angenent.
\newblock The zero set of a solution of a parabolic equation.
\newblock {\em J. Reine Angew. Math.}, 390:79--96, 1988.

\bibitem{Angenent1999}
Sigurd Angenent.
\newblock Constructions with analytic semigroups and abstract exponential decay
  results for eigenfunctions.
\newblock In Joachim Escher and Gieri Simonett, editors, {\em Topics in
  Nonlinear Analysis: The Herbert Amann Anniversary Volume}, pages 11--27.
  Birkh{\"a}user Basel, Basel, 1999.

\bibitem{MR1059647}
Sigurd~B. Angenent.
\newblock Nonlinear analytic semiflows.
\newblock {\em Proc. Roy. Soc. Edinburgh Sect. A}, 115(1-2):91--107, 1990.

\bibitem{GAFA}
Sigurd~B. Angenent and Dan Knopf.
\newblock Ricci solitons, conical singularities, and nonuniqueness.
\newblock {\em Geom. Funct. Anal.}, 32(3):411--489, 2022.

\bibitem{TAubin98}
Thierry Aubin.
\newblock {\em Some nonlinear problems in {R}iemannian geometry}.
\newblock Springer Monographs in Mathematics. Springer-Verlag, Berlin, 1998.

\bibitem{B99}
Christoph B\"{o}hm.
\newblock Non-compact cohomogeneity one {E}instein manifolds.
\newblock {\em Bull. Soc. Math. France}, 127(1):135--177, 1999.

\bibitem{Chaperon2002}
M.~Chaperon.
\newblock Invariant manifolds revisited.
\newblock {\em Tr. Mat. Inst. Steklova}, 236:428--446, 2002.

\bibitem{Chaperon2004}
Marc Chaperon.
\newblock Stable manifolds and the {P}erron-{I}rwin method.
\newblock {\em Ergodic Theory Dynam. Systems}, 24(5):1359--1394, 2004.

\bibitem{MR915552}
Ph. Cl\'ement, H.~J. A.~M. Heijmans, S.~Angenent, C.~J. van Duijn, and
  B.~de~Pagter.
\newblock {\em One-parameter semigroups}, volume~5 of {\em CWI Monographs}.
\newblock North-Holland Publishing Co., Amsterdam, 1987.

\bibitem{MR1337223}
Rafael de~la Llave and C.~Eugene Wayne.
\newblock On {I}rwin's proof of the pseudostable manifold theorem.
\newblock {\em Math. Z.}, 219(2):301--321, 1995.

\bibitem{DO23}
Alix Deruelle and Tristan Ozuch.
\newblock Dynamical (in)stability of {R}icci-flat {ALE} metrics along the
  {R}icci flow.
\newblock {\em Calc. Var. Partial Differential Equations}, 62(3):Paper No. 84,
  60, 2023.

\bibitem{MR4534486}
Alix Deruelle and Tristan Ozuch.
\newblock Dynamical (in)stability of {R}icci-flat {ALE} metrics along the
  {R}icci flow.
\newblock {\em Calc. Var. Partial Differential Equations}, 62(3):Paper No. 84,
  60, 2023.

\bibitem{deruelle2025orbifoldsingularityformationancient}
Alix Deruelle and Tristan Ozuch.
\newblock Orbifold singularity formation along ancient and immortal ricci
  flows, 2025.

\bibitem{MR0697987}
Dennis~M. DeTurck.
\newblock Deforming metrics in the direction of their {R}icci tensors.
\newblock {\em J. Differential Geom.}, 18(1):157--162, 1983.

\bibitem{MR1009162}
Nelson Dunford and Jacob~T. Schwartz.
\newblock {\em Linear operators. {P}art {I}}.
\newblock Wiley Classics Library. John Wiley \& Sons, Inc., New York, 1988.
\newblock General theory, With the assistance of William G. Bade and Robert G.
  Bartle, Reprint of the 1958 original, A Wiley-Interscience Publication.

\bibitem{GiHaPo2003}
G.~W. Gibbons, Sean~A. Hartnoll, and C.~N. Pope.
\newblock Bohm and {E}instein-{S}asaki metrics, black holes, and cosmological
  event horizons.
\newblock {\em Phys. Rev. D (3)}, 67(8):084024, 24, 2003.

\bibitem{Hebey}
Emmanuel Hebey.
\newblock {\em Sobolev spaces on {R}iemannian manifolds}, volume 1635 of {\em
  Lecture Notes in Mathematics}.
\newblock Springer-Verlag, Berlin, 1996.

\bibitem{IrwinProofPseudoStableMfdThm}
M.~C. Irwin.
\newblock A new proof of the pseudostable manifold theorem.
\newblock {\em J. London Math. Soc. (2)}, 21(3):557--566, 1980.

\bibitem{IrwinBook}
M.~C. Irwin.
\newblock {\em Smooth dynamical systems}, volume~17 of {\em Advanced Series in
  Nonlinear Dynamics}.
\newblock World Scientific Publishing Co., Inc., River Edge, NJ, 2001.
\newblock Reprint of the 1980 original, With a foreword by R. S. MacKay.

\bibitem{MR689073}
No\"el Lohou\'e and Thomas Rychener.
\newblock Die {R}esolvente von {$\Delta $}\ auf symmetrischen {R}\"aumen vom
  nichtkompakten {T}yp.
\newblock {\em Comment. Math. Helv.}, 57(3):445--468, 1982.

\bibitem{MR227583}
Joel~W. Robbin.
\newblock On the existence theorem for differential equations.
\newblock {\em Proc. Amer. Math. Soc.}, 19:1005--1006, 1968.

\bibitem{SSS08}
Oliver~C. Schn\"{u}rer, Felix Schulze, and Miles Simon.
\newblock Stability of {E}uclidean space under {R}icci flow.
\newblock {\em Comm. Anal. Geom.}, 16(1):127--158, 2008.

\bibitem{SSS11}
Oliver~C. Schn\"{u}rer, Felix Schulze, and Miles Simon.
\newblock Stability of hyperbolic space under {R}icci flow.
\newblock {\em Comm. Anal. Geom.}, 19(5):1023--1047, 2011.

\bibitem{MR4862085}
Miles Simon.
\newblock Preserving curvature lower bounds when {R}icci flowing non-smooth
  initial data.
\newblock In {\em Surveys in differential geometry 2022. {E}ssays on geometric
  flows---celebrating 40 years of {R}icci flow}, volume~27 of {\em Surv.
  Differ. Geom.}, pages 147--187. Int. Press, Somerville, MA, 2024.

\bibitem{MR1016445}
Michael~E. Taylor.
\newblock {$L^p$}-estimates on functions of the {L}aplace operator.
\newblock {\em Duke Math. J.}, 58(3):773--793, 1989.

\bibitem{Triebel86}
Hans Triebel.
\newblock Spaces of {B}esov-{H}ardy-{S}obolev type on complete {R}iemannian
  manifolds.
\newblock {\em Ark. Mat.}, 24(2):299--337, 1986.

\bibitem{Triebel87}
Hans Triebel.
\newblock Characterizations of function spaces on a complete {R}iemannian
  manifold with bounded geometry.
\newblock {\em Math. Nachr.}, 130:321--346, 1987.

\end{thebibliography}

\end{document}